\newtheorem{Theorem}{Theorem}[section]
\newtheorem{Lemma}[Theorem]{Lemma}
\newtheorem{Proposition}[Theorem]{Proposition}
\newtheorem{Example}[Theorem]{Example}
\newtheorem{Corollary}[Theorem]{Corollary}
\newtheorem{Remark}{Remark}[section]
\theoremstyle{definition}
\theoremstyle{remark}
\numberwithin{equation}{section}
\newcommand{\oD}{ \!{\buildrel \circ 
\over D}}
\newcommand{\oDn}{ \!{\buildrel \circ 
\over D}^{_{_{_{\mbox{{\small $_{n}$}}}}}}}
\newcommand{\R}{\mathbb{R}}
\def\p{\partial}
\def\B{{\mathcal B}}
\def\P{{\mathcal P}}
\def\H{{\mathcal H}}
\def\T{{\mathcal T}}
\def\L{{\mathrm L}}
\def\R{{\mathrm R}}
\def\O{{\mathrm O}}
\def\Tr{{\mathcal T}}
\def\GTr{{\mathcal G}}
\def\In{{\mathrm I}{\mathrm n}}
\def\D{\mathcal{D}}
\def\H{{\mathcal H}}
\def\Riem{{\mathcal R}{\mathrm i}{\mathrm e}{\mathrm m}}
\def\closure{{\mathrm c}{\mathrm l}{\mathrm o}{\mathrm s}{\mathrm u}{\mathrm r}{\mathrm e}}
\def\NH{{\mathcal N}{\mathcal H}}
\def\head{\mathrm{h}\mathrm{e}\mathrm{a}\mathrm{d}}
\def\cyl{\mathrm{c}\mathrm{y}\mathrm{l}}
\def\lens{{\mathrm l}{\mathrm e}{\mathrm n}{\mathrm s}}
\def\torp{{\mathrm t}{\mathrm o}{\mathrm r}{\mathrm p}}
\def\Unc{\mathrm{U}\mathrm{n}\mathrm{c}}
\def\Cut{\mathrm{C}\mathrm{u}\mathrm{t}}
\def\Mov{\mathrm{M}\mathrm{o}\mathrm{v}}
\def\ann{\mathrm{A}\mathrm{n}\mathrm{n}}
\def\Fit{\mathrm{F}\mathrm{i}\mathrm{t}}
\def\ro{\mathrm{r}\mathrm{o}}
\def\bulb{\mathrm{b}\mathrm{u}\mathrm{l}\mathrm{b}}
\def\Bulb{\mathrm{B}\mathrm{u}\mathrm{l}\mathrm{b}}
\begin{document}

\title{H-Spaces, Loop Spaces and the Space of Positive Scalar Curvature Metrics on the Sphere}

\author{Mark Walsh}
\address{Department of Mathematics, Wichita State University}
\curraddr{Wichita, KS 67208}
\email{walsh@math.wichita.edu}
\thanks{}

\begin{abstract} For dimensions $n\geq 3$, we show that the space of metrics of positive scalar curvature on the sphere $S^{n}$, denoted $\Riem^{+}(S^{n})$, is homotopy equivalent to a subspace which takes the form of an $H$-space with a homotopy commutative, homotopy associative product operation. This product operation is based on the connected sum construction. We then exhibit an action on this subspace of the operad obtained by applying the bar construction to the little $n$-disks operad. Using results of Botvinnik, Boardman, Vogt and May we show that this implies, when $n=3$ or $n\geq 5$, that the space $\Riem^{+}(S^{n})$ is weakly homotopy equivalent to an $n$-fold loop space.
\end{abstract}

\maketitle
\section{Introduction}\label{intro}
This work is motivated by the problem of understanding the topology of the space of metrics of positive scalar curvature ({\em psc-metrics}) on the sphere $S^{n}$. This space is denoted $\Riem^{+}(S^{n})$ and is an open subspace of the space of all Riemannian metrics on $S^{n}$, $\Riem(S^{n})$, equipped with its standard smooth topology. It is known that when $n=2$, the space $\Riem^{+}(S^{n})$ is contractible; see \cite{RS}. When $n=3$, we know from a recent result of Marques that this space is path connected; see \cite{Marques}. In fact it is thought by experts that the space is contractible in this case also. When $n\geq 4$ however, the space $\Riem^+(S^n)$ is usually not path connected; see for example \cite{Carr}. Furthermore for $k\geq 1$, the groups $\pi_k(\Riem^+(S^n))$ are often non-trivial; see \cite{Hit}, \cite{Crowley-Schick} and \cite{Hanke-Schick-Steimle}. 
In this paper we make the following contribution.

\vspace{0.2cm}
\noindent {\bf Main Results.} 
{\em
\begin{enumerate}
\item[{\bf (i.)}] When $n\geq 3$, the space $\Riem^{+}(S^{n})$ is homotopy equivalent to a subspace which admits a homotopy product (i.e. is an $H$-space). Furthermore this product is homotopy commutative and homotopy associative.
\item[{\bf (ii.)}] When $n=3$ or $n\geq 5$, the space $\Riem^{+}(S^{n})$ is weakly homotopy equivalent to an $n$-fold loop space.
\end{enumerate}
}
\vspace{0.2cm}

\noindent Definitions of the terms $H$-space and Loop Space are given in section \ref{Hdefn}. We will not discuss the topological implications of such structure on $\Riem^{+}(S^{n})$ other than to point out that the condition that a topological space is an $H$-space, or especially an iterated loop space, imposes significant restrictions on its homotopy type. For more on this, see chapter 4 of \cite{Stash}. 

The main idea is as follows. We specify certain subspaces of $\Riem^{+}(S^{n})$ consisting of psc-metrics which take a ``standard form" near a fixed base point $p_0\in S^{n}$. It is known from results in \cite{Walsh3} that these subspaces are all homotopy equivalent to the space $\Riem^{+}(S^{n})$. We then construct products on these spaces, in the case where $n\geq 3$, based on the Gromov-Lawson connected sum construction in \cite{GL}. Roughly speaking, these products involve removing standard ``caps" around the point $p_0$ of the factor metrics and then taking a connected sum via some appropriate connecting metric. There is one important caveat. We need to ensure that the metric obtained by this product has a base point, something which the individual factors lose once we remove the standard caps. Hence we use, as an intermediary metric, a psc-metric on the sphere containing $3$ such standard caps, two of which will be removed for the attachments and the third which will be a base point cap. 
In all cases, we will show that this determines a homotopy product (i.e. makes the subspace an $H$-space) which is homotopy commutative and homotopy associative.

We then focus on one such subspace of $\Riem^{+}(S^{n})$: the space of psc-metrics which take the form of a round hemisphere of radius $1$ near a fixed base point $p_0$, denoted $\Riem_{D_{+}(1)}^{+}(S^{n})$. On this space, we show that the homotopy product generalises to an action of a certain operad. This operad is obtained via a process called the bar construction, from the {\em operad of little $n$-dimensional disks}; see below for a description of this object and the bar construction. It follows from results of Boardman, Vogt and May that a space $Z$ which admits such an action is weakly homotopy equivalent to an $n$-fold loop space, provided $Z$ is group-like (the induced multiplication on $\pi_{0}(X)$  gives it the structure of a group.) 

Thus, to demonstrate that $\Riem^{+}(S^{n})$ is weakly homotopic to an $n$-fold loop space, it remains to show that $\Riem_{D_{+}(1)}^{+}(S^{n})$ is group-like. Although our action is defined when $n\geq 3$, we show that $\Riem_{D_{+}(1)}^{+}(S^{n})$ is group-like only when $n=3$ or $n\geq 5$. In proving this, we make use of a recent theorem by Botvinnik, Theorem B. of \cite{Botvinnik}, concerning isotopy and concordance of psc-metrics. The hypothesis that $n=3$ or $n\geq 5$ stems from the fact that Botvinnik's theorem is false when $n=4$. Whether or not the same is true of our result is unknown.

\subsection {Organisation of the paper.} The paper is organised as follows. After recalling the defintions of an $H$-space and a loop space in section \ref{Hdefn}, we proceed in section \ref{disksphere} to describe two types of psc-metric on the disk which are well-behaved near the boundary. Roughly, these are metrics which are either cylindrical or sphere-like near the boundary and may be appropriately combined to obtain psc-metrics on the sphere. In later sections we make use of this in specifying product structures on certain subspaces of psc-metrics on the sphere which are standard near a fixed base point. In particular, in section \ref{torpconn}, we consider the most elementary of these subspaces: the space of psc-metrics on $S^{n}$ which have a ``torpedo cap" around some fixed base point $p_0\in S^{n}$. We denote this space $\Riem_{\torp(p_0)}^{+}(S^{n})$. After specifying a multiplication map on this space,  we review the Gromov-Lawson connected sum construction as generalised in \cite{Walsh1} for compact families of psc-metrics. In particular, we show in Lemma \ref{homequivspaces}, that the subspace of psc-metrics on $S^{n}$ which have torpedo caps at a fixed base point is actually homotopy equivalent, when $n\geq 3$, to the space of all psc-metrics on $S^{n}$. In section \ref{HSpaceThm3} we prove the first of our main results: the $H$-Space Theorem. This is Theorem \ref{HTheorem}, where we show that the multiplication map discussed above gives the space $\Riem_{\torp(p_0)}^{+}(S^{n})$ the structure of an  $H$-space. We also show that the product is both homotopy commutative and homotopy associative. Thus, $\Riem^{+}(S^{n})$ is homotopy equivalent to an $H$-space when $n\geq 3$. A minor consequence, Corollary \ref{AbFG}, is that the fundamental group of $\Riem^{+}(S^{n})$, with base point the standard round metric, is Abelian.

In section \ref{bulbsection}, we describe slightly more sophisticated subspaces of $\Riem^{+}(S^{n})$, consisting of psc-metrics which have the form of ``bulbs" and ``heads" near a base point $p_0\in S^{n}$. These spaces, denoted $\Riem_{\bulb(p_0)}^{+}(S^{n})$ and $\Riem_{\head(p_0)}^{+}(S^{n})$ are shown also to be homotopy equivalent to $\Riem^{+}(S^{n})$ in Lemma \ref{bulbdefret}. We then define multiplication maps, analogous to the one above, which give these spaces an $H$-space structure. In the case of $\Riem_{\head(p_0)}^{+}(S^{n})$, there is a deformation retract down to the subspace $\Riem_{D_{+}(1)}^{+}(S^{n})$, of psc-metrics which take the form of a round hemisphere of radius $1$ near $p_0$. On this space, we will show that the homotopy product generalises nicely to a certain operad action. Before doing this, we spend some time in section \ref{Operadsection} reviewing the various operads we will require. In particular, we consider the operad of little $n$-dimensional disks $\D_n$ (as well as a variant of this operad the round hemisphere) and the bar construction for operads. We recall relevant results of Boardman, Vogt and May: Theorems \ref{BVthm} and \ref{BVM}. These results allow us to conclude that the existence of an appropriate action of the operad $W\D_{n}$, obtained from $\D_{n}$ via the bar construction, on a group-like space $Z$ implies that $Z$ is weakly homotopy equivalent to an $n$-fold loop space. In section \ref{Dspace} we exhibit, for $n\geq 3$, precisely such an action on the space $\Riem_{D_{+}(1)}^{+}(S^{n})$. This is Lemma \ref{actionlemma}. Finally, in section \ref{grouplike}, we demonstrate that $\Riem_{D_{+}(1)}^{+}(S^{n})$ is indeed group-like in the case when $n=3$ or $n\geq5$; see Lemma \ref{pi0group}. Here we make great use of a recent theorem of Botvinnik from \cite{Botvinnik} concerning psc-concordance. This allows us to conclude the main result, Theorm \ref{LoopThm}.

\subsection{Acknowledgements.}I would like to thank Boris Botvinnik at the University of Oregon for suggesting this problem, Kirk Lancaster and Philip Parker at Wichita State University, David Wraith at NUI Maynooth, Ireland, and especially Victor Turchin at Kansas State University, for some helpful conversations. 

\section{$H$-Spaces and Loop spaces.}\label{Hdefn} A topological space $Z$ is a {\em $H$-space} if $Z$ is equipped with a continuous multiplication map $\mu:Z\times Z\rightarrow Z$ and an identity element $e\in Z$ so that the maps from $Z$ to $Z$ given by $x\mapsto \mu(x,e)$ and $x\mapsto \mu(e,x)$ are both homotopy equivalent to the identity map $x\mapsto x$. There are stronger versions of this definition where the above homotopies to the identity map are required to be homotopic through pointed maps $(Z,e)\rightarrow (Z,e)$ or where multiplication by the identity is the identity map. It is well known that in the case when $Z$ is homotopy equivalent to a CW complex, $Z$ admits a product which agrees with one of these definitions if and only if it admits products agreeing with the other two; see chapter 3.C of \cite{Hatcher}. Moreover, it follows from the work of Palais in \cite{Palais} that for any smooth compact manifold $X$, $\Riem^{+}(X)$ is homotopy equivalent to a CW complex. Thus, we feel justified in using the weaker definition.
An $H$-space $Z$ is said to be {homotopy commutative} if the maps $\mu$ and $\mu\circ\omega$, where $\omega:Z\times Z\rightarrow Z\times Z$ is the ``flip" map defined $\omega (x,y)=(y,x)$, are homotopy equivalent. Finally, $Z$ is a homotopy associative $H$-space if the maps from $Z\times Z\times Z$ to $Z$ given by $(x,y,z)\mapsto \mu(\mu(x,y),z)$ and $(x,y,z)\mapsto \mu(x, \mu(y,z))$ are homotopy equivalent. 

Given a topological space $Z$ with a prescribed base point $z_0\in Z$, we may consider the space of all loops based at $z_0$. This is the space of all continuous maps $\gamma: [0,1]\rightarrow Z$ so that $\gamma(0)=\gamma(1)=z_0$. This space is known as the {\em loop space of $Z$}, denoted $\Omega (Z, z_0)$, with base point the constant loop at $z_0$. Assuming the base point to be understood, we simply write $\Omega Z$. Repeated application of this construction yields the {k-th iterated loop space} $\Omega^{k}Z=\Omega(\Omega\cdots (\Omega Z)$ where at each stage the new base point is simply the constant loop at the old base point. We close by pointing out that a loop space is also an $H$-space with the multiplication determined by concatenation of loops. Whether or not a given $H$-space has the structure of a loop space is a more complicated problem concerning certain ``coherence" conditions on the homotopy associativity of the multiplication. It is a theorem of Stasheff that a space satisfies these conditions, is a so-called $A_{\infty}$-space, if and only if it is a loop space; see Theorem 4.18 in \cite{Stash}. The notion of an {\em operad} was constructed to more efficiently describe these coherence conditions, something we will return to in section \ref{Operadsection}.

\section{Metrics on the disk and sphere.}\label{disksphere}
For a smooth $n$-dimensional manifold $M$, possibly with non-empty boundary, we denote by $\Riem(M)$ the space of Riemannian metrics on $M$ equipped with its standard $C^{\infty}$-topology; see section 1.1 of \cite{Walsh1} for a description. Contained inside $\Riem(M)$ as an open subspace is the space of psc-metrics on $M$, denoted $\Riem^{+}(M)$. A path in this space is known as a {\em psc-isotopy} while metrics which lie in the same path component are said to be {\em psc-isotopic}.
In the case when $\p M\neq \emptyset$, it is common to consider only a subspace of $\Riem^{+}(M)$ of metrics which satisfy some constraint near the boundary. We will need such a constraint in this paper also and will return to this issue shortly.

We will mostly focus on the case when $M$ is either $D^{n}$ or $S^{n}$, the standard smooth disk or sphere of dimension $n$. Usually $n$ is assumed to be at least three. We denote by $ds_{n}^{2}$, the standard round metric of radius $1$ on $S^{n}$. As smooth topological objects, we model the disk $D^{n}=D^{n}(1)$ as the set of points $\{x\in\mathbb{R}^{n}:|x|\leq 1\}$ and the sphere $S^{n}=S^{n}(1)$ as the set $\{x\in\mathbb{R}^{n+1}:|x|=1\}$. In constructing metrics on these spaces, we will often work on an underlying disk or sphere $D^{n}(r)$ or $S^{n}(r)$ where the radius $r\neq 1$. The re-scaling function ${ x}\mapsto r{ x}$ gives a canonical way of pulling back metrics to the standard disk or sphere. Thus, we will often declare a metric which has been constructed on a general $D^{n}(r)$ or $S^{n}(r)$ to a be a metric on $D^{n}$ or $S^{n}$ assuming the metric to be pulled back in this way. Finally, we respectively denote by $D_{-}^{n}$ and $D_{+}^{n}$ the spaces $\{x\in\mathbb{R}^{n+1}:|x|=1, x_{n+1}\leq 0\}$ and $\{x\in\mathbb{R}^{n+1}:|x|=1, x_{n+1}\geq 0\}$, i.e. the southern and northern hemispheres of $S^{n}$. These hemispheres will be identified with the disk $D^{n}$ via the obvious map which sends geodesic rays emanating from the points $(0,0,\cdots, 0,\pm 1)$ to the corresponding ray on the flat disk $D^{n}(\frac{\pi}{2})$ followed by the above rescaling map. 

We now return to the question of boundary conditions on certain metrics. Our various constructions will involve attaching Riemannian disks along their boundaries to obtain new metrics on the sphere. On the smooth topological level, this involves gluing a pair disks together by identifying the boundary spheres via some diffeomorphism of $S^{n-1}$. In our case, the boundary sphere is canonically identified with the standard unit $(n-1)$-sphere in $\mathbb{R}^{n}$ and we always assume that we are gluing with the identity diffeomorphism. Of course, we need to ensure smooth attachment at the metric level. There are two ways of doing this which we will explore. The first is to work only with metrics which take the form of a round cylinder near or at least infinitesimally at, the boundary. The second is to consider metrics which, near their boundaries, agree with a geodesic ball from a round sphere (for example a hemisphere) near its boundary, at least infinitesimally; see Fig. \ref{stddiskmetrics} for a rough depiction. We will now describe these metrics in more detail.

\begin{figure}[!htbp]
\vspace{-0.5cm}
\hspace{3.0cm}
\begin{picture}(0,0)
\includegraphics{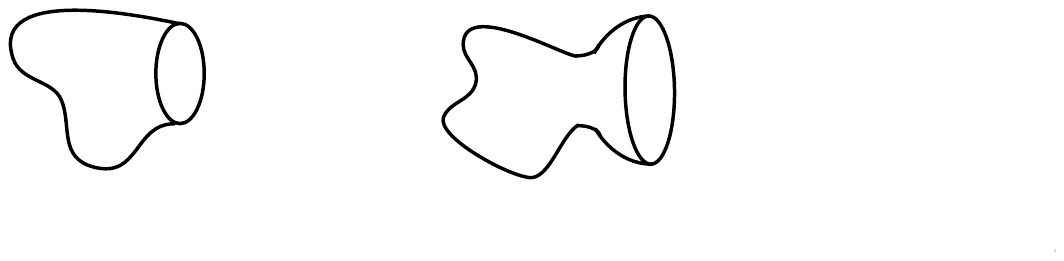}%
\end{picture}
\setlength{\unitlength}{3947sp}%
\begingroup\makeatletter\ifx\SetFigFont\undefined%
\gdef\SetFigFont#1#2#3#4#5{%
  \reset@font\fontsize{#1}{#2pt}%
  \fontfamily{#3}\fontseries{#4}\fontshape{#5}%
  \selectfont}%
\fi\endgroup%
\begin{picture}(5079,1559)(1902,-7227)
\end{picture}%
\caption{Metrics on the disk which are cylindrical (left) and spherical (right) near the boundary}
\label{stddiskmetrics}
\end{figure}    

\subsection{Metrics which are cylindrical near the boundary}\label{torprod} The usual method is to restrict ourselves to working with metrics which take the structure of a standard round cylinder near the boundary, at least infinitesimally. 
With this in mind we specify a subspace $\Riem_{\cyl(0)}^{+}(D^{n})\subset \Riem^{+}(D^{n})$ as follows. Beginning with the disk $D^{n}$, let $\epsilon>0$ and consider $D^{n}=D^{n}(1)$ as a submanifold of the disk of radius $1+\epsilon$, $D^{n}(1+\epsilon)$. Let $t$ denote the radial distance from the origin. Furthermore let $\ann(1, 1+\epsilon)$ denote the closure of the annulus $D^{n}(1+\epsilon)\setminus D^{n}(1)$. Let $\Riem_{{\cyl}(\epsilon)}^{+}(D^{n}(1+\epsilon))$ denote the space of psc-metrics on $D^{n}(1+\epsilon)$ defined as follows:
\begin{equation*}
\Riem_{{\cyl}(\epsilon)}^{+}(D^{n}(1+\epsilon)):=\{g\in \Riem^{+}(D^{n}(1+\epsilon)):g|_{\ann(1,1+\epsilon)}=dt^{2}+\delta^{2}ds_{n-1}^{2} {\text{ for some }} \delta>0\}.
\end{equation*}
We next consider the restriction map:
\begin{equation*}
\begin{split}
\Riem_{{\cyl}(\epsilon)}^{+}(D^{n}(1+\epsilon))&\longrightarrow \Riem^{+}(D^{n}(1)),\\
g&\longmapsto g|_{D^{n}(1)}.
\end{split}
\end{equation*}
We then define the space $\Riem_{{\cyl}(0)}^{+}(D^{n})$ to be the image of this restriction map. 

We now consider a pair of psc-metrics $g_0,g_1\in\Riem_{\cyl(0)}^{+}(D^{n})$. These metrics are very well behaved along the boundary. Indeed the only obstruction to simply gluing them together in the usual way is that the radii of their boundary spheres may not agree. There are two obvious ways we might proceed. The simplest is to simply rescale one or both of these metrics (multiplying the metric by an appropriate constant) so that the boundaries are compatible. Alternatively, one may wish to leave $g_0$ and $g_1$ unscathed and connect them via an appropriate warped round cylinder metric whose ends correspond to the respective boundaries. This second method seems a little cumbersome, but has the advantage that it does not require a global adjustment of $g_0$ or $g_1$. For our purposes however, the rescaling method will suffice.

We begin with an elementary fact. For any Riemannian metric $g$ on a smooth $n$-dimensional manifold $M$, the scalar curvature $R_{cg}$ of the metric $cg$ obtained by multiplying $g$ by a constant $c>0$ is given by the formula:
\begin{equation*}
R_{cg}=\frac{1}{c}R_{g}.
\end{equation*}
Thus, rescaling a psc-metric by a positive constant results in another psc-metric. We now define the function $\rho$, the {\em radius measuring map} as follows:
\begin{equation}\label{bdyrd}
\begin{split}
\rho:\Riem_{\cyl(0)}^{+}(D^{n})&\longrightarrow (0,\infty),\\
g& \longmapsto \rho(g)= {\text{Radius of sphere }} g|_{\p D^{n}}.
\end{split}
\end{equation}
Let $f$ be any function $f:(0,\infty)\times(0,\infty)\rightarrow (0,\infty)$. We now replace the metrics $g_0$ and $g_1$ respectively with the metrics: 
\begin{equation*}
\frac{f(\rho_0, \rho_1)^{2}}{{\rho_0}^{2}}g_0 \text{ and } \frac{f(\rho_0, \rho_1)^{2}}{{\rho_1}^{2}}g_1,
\end{equation*}
where $\rho_i=\rho(g_i)$ for $i=0,1$.
These replacement metrics are still elements of $\Riem_{\cyl(0)}^{+}(D^{n})$ but with boundary radii both equal to $f(\rho_0, \rho_1)$. We attach these metrics in the obvious way to obtain a new psc-metric on $S^{n}$ which we denote $g_0\cup_{f}g_1$; see Fig. \ref{rescaling}.
\begin{figure}[!htbp]
\vspace{-0.3cm}
\begin{picture}(0,0)
\includegraphics{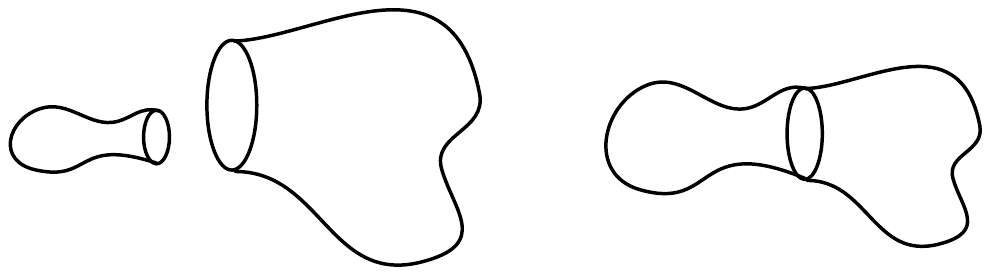}%
\end{picture}
\setlength{\unitlength}{3947sp}%
\begingroup\makeatletter\ifx\SetFigFont\undefined%
\gdef\SetFigFont#1#2#3#4#5{%
  \reset@font\fontsize{#1}{#2pt}%
  \fontfamily{#3}\fontseries{#4}\fontshape{#5}%
  \selectfont}%
\fi\endgroup%
\begin{picture}(5079,1559)(1902,-7227)
\put(2124,-6506){\makebox(0,0)[lb]{\smash{{\SetFigFont{10}{8}{\rmdefault}{\mddefault}{\updefault}{\color[rgb]{0,0,0}$g_0$}%
}}}}
\put(3384,-6506){\makebox(0,0)[lb]{\smash{{\SetFigFont{10}{8}{\rmdefault}{\mddefault}{\updefault}{\color[rgb]{0,0,0}$g_1$}%
}}}}
\put(5414,-6906){\makebox(0,0)[lb]{\smash{{\SetFigFont{10}{8}{\rmdefault}{\mddefault}{\updefault}{\color[rgb]{0,0,0}$g_0\cup_f g_1$}%
}}}}
\end{picture}%
\caption{The sphere metric $g_0\cup_f g_1$ (right) is formed by rescaling and gluing $g_0$ and $g_1$ (left)}
\label{rescaling}
\end{figure}   
\noindent Thus, for each $f$, the construction gives rise to a continuous {\em joining map}, $J^{\cyl(f)}$, defined:
\begin{equation}\label{joinmap}
\begin{split}
J^{\cyl(f)}:\Riem_{\cyl(0)}^{+}(D^{n})\times\Riem_{\cyl(0)}^{+}(D^{n})&\longrightarrow \Riem^{+}(S^{n})\\
(g_0, g_1)\longmapsto g_0\cup_f g_1.
\end{split}
\end{equation}
It is probably easiest to apply this construction when $f=\pi_\L$ or $\pi_\R$, the projection function onto the left or right factor. Here we write $J^{\L}$ or $J^{\R}$ to mean $J^{\cyl(\pi_\L)}$ or $J^{\cyl(\pi_\R)}$ respectively. Thus $J^{\L}$ fixes the size of the left input metric $g_0$ and rescales the right input $g_1$ while $J^{\R}$ fixes $g_1$ and rescales $g_0$. 

\subsection{Metrics which are sphere-like near the boundary}\label{lensection}
There is another approach to this problem, one which will be of great use to us later on. We begin with a round $n$-dimensional sphere of radius $\lambda$. We denote by $d_\lambda(a,b)$, the usual distance between points $a$ and $b$ on this sphere and by $B_\lambda(p,r)$, the closed geodesic ball of radius $r\in (0,\lambda\pi)$ about the $p\in S^{n}$. We identify $B_{\lambda}(p,r)$ with the northern hemisphere $D_{+}^{n}$ of the standard unit sphere in the following way. Move $p$, by the obvious rigid rotation of the $S^{n}$ along the great circle containing $p$ and the north pole, into the north pole position. We then rescale the sphere to make its radius $\lambda=1$. The ball $B_{\lambda}(p,r)$ is therefore replaced by the ball $B_1(p,\frac{r}{\lambda})$. Next, we identify $B_1(p,\frac{r}{\lambda})$ with the northern hemisphere $D_{+}^{n}$ by moving each point $x\in B_1(p,\frac{r}{\lambda})$, along the great circle through $p$ and $x$, to the point whose distance from $p$ is $\frac{\lambda\pi}{2r}d_1(p,x)$. All of this is depicted in Fig. \ref{lenscap}. 
Finally, we identify the northern hemisphere $D_{+}^{n}$ with the disk $D^{n}$ in the obvious way described at the beginning of this section.
By pulling back the restriction of the round metric of radius $\lambda$ to the ball $B_{\lambda}(p,r)$ via this composition of identifications, we obtain a metric on the disk $D^{n}$. This metric is known as the {\em $(\lambda,\epsilon)$-lens metric} on $D^{n}$ and denoted $g_{\lens}^{n}(\lambda, \epsilon)$. Note that, context permitting, we will sometimes refer to the ball $B_{\lambda}(p,r)$ as the {\em $(\lambda,r)$-lens} at $p$ also. Each lens $g_{\lens}^{n}(\lambda, r)$ has a {\em lens metric complement}, namely the metric $g_{\lens}^{n}(\lambda, \lambda\pi-r)$, which may attached to $g_{\lens}^{n}(\lambda, r)$ in the obvious way to reconstitute the round sphere metric of radius $\lambda$. 

\begin{figure}[!htbp]
\vspace{+1cm}
\hspace{-4.0cm}
\begin{picture}(0,0)
\includegraphics{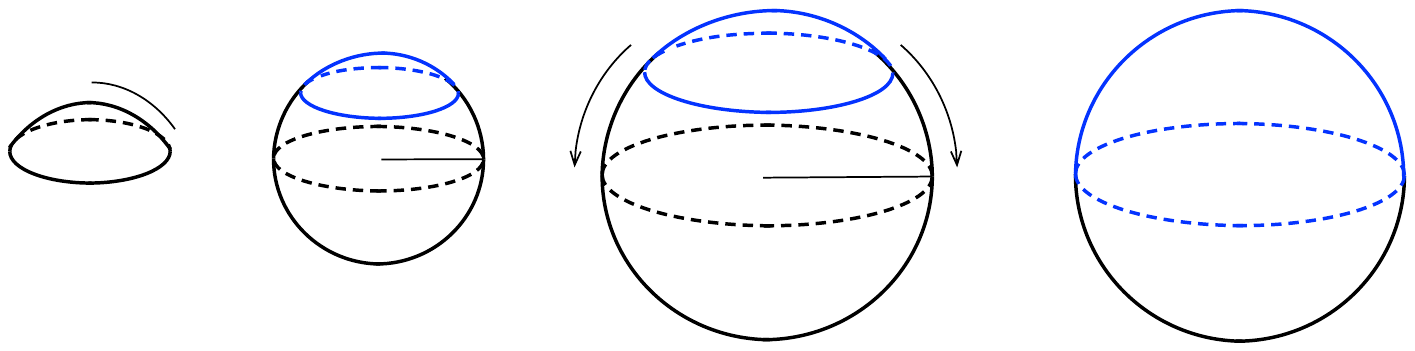}%
\end{picture}
\setlength{\unitlength}{3947sp}%
\begingroup\makeatletter\ifx\SetFigFont\undefined%
\gdef\SetFigFont#1#2#3#4#5{%
  \reset@font\fontsize{#1}{#2pt}%
  \fontfamily{#3}\fontseries{#4}\fontshape{#5}%
  \selectfont}%
\fi\endgroup%
\begin{picture}(5079,1559)(1902,-7227)
\put(2100,-6500){\makebox(0,0)[lb]{\smash{{\SetFigFont{10}{8}{\rmdefault}{\mddefault}{\updefault}{\color[rgb]{0,0,0}$g_{\lens}^{n}(\lambda,r)$}%
}}}}
\put(2600,-5800){\makebox(0,0)[lb]{\smash{{\SetFigFont{10}{8}{\rmdefault}{\mddefault}{\updefault}{\color[rgb]{0,0,0}$r$}%
}}}}
\put(3900,-6100){\makebox(0,0)[lb]{\smash{{\SetFigFont{10}{8}{\rmdefault}{\mddefault}{\updefault}{\color[rgb]{0,0,0}$\lambda$}%
}}}}
\put(3525,-5500){\makebox(0,0)[lb]{\smash{{\SetFigFont{10}{8}{\rmdefault}{\mddefault}{\updefault}{\color[rgb]{0,0,0}$B_{\lambda}(p,r)$}%
}}}}
\put(5250,-5300){\makebox(0,0)[lb]{\smash{{\SetFigFont{10}{8}{\rmdefault}{\mddefault}{\updefault}{\color[rgb]{0,0,0}$B_{1}(p,\frac{r}{\lambda})$}%
}}}}
\put(5900,-6180){\makebox(0,0)[lb]{\smash{{\SetFigFont{10}{8}{\rmdefault}{\mddefault}{\updefault}{\color[rgb]{0,0,0}$1$}%
}}}}
\put(7300,-5300){\makebox(0,0)[lb]{\smash{{\SetFigFont{10}{8}{\rmdefault}{\mddefault}{\updefault}{\color[rgb]{0,0,0}$D_{+}^{n}=B_{1}(p,\frac{\pi}{2})$}%
}}}}
\end{picture}%
\caption{The metric $g_{\lens}^{n}(\lambda, r)$ (left) and the rescaling to identify the ball $B_{\lambda}(p,r)$ with $D_{+}^{n}$ (right)}
\label{lenscap}
\end{figure}   

In spherical coordinates on the disk $D^{n}(r)$, the metric $g_{\lens}^{n}(\lambda, r)$ takes the form:
\begin{equation*}
g_{\lens}^{n}(\lambda, r)=ds^{2}+\lambda^{2}\sin^{2}{\frac{s}{\lambda}}ds_{n-1}^{2},
\end{equation*}
where $s\in(0,r)$ denotes the radial distance coordinate. After pulling back to $D^{n}$ this metric then takes the form:
\begin{equation*}
g_{\lens}^{n}(\lambda, r)=r^{2}dt^{2}+\lambda^{2}\sin^{2}{\frac{rt}{\lambda}}ds_{n-1}^{2},
\end{equation*}
where $t\in(0,1]$ is the new radial distance coordinate. We consider the space of psc-metrics on $D^{n}(1+\epsilon)$ defined as follows. For each pair $\lambda>0, r\in(0,\lambda\pi)$ and each $\epsilon\in(0,\lambda{\pi}-r)$, let $\Riem_{{(\lambda,r)-\lens}(\epsilon)}^{+}(D^{n}(1+\epsilon))$ denote the space:
\begin{equation*}
\begin{split}
&\Riem_{{(\lambda,r)-\lens}(\epsilon)}^{+}(D^{n}(1+\epsilon)):=\\
&\{g\in \Riem^{+}(D^{n}(1+\epsilon)):g|_{\ann(1,1+\epsilon)}=r^2 dt^{2}+\lambda^{2}\sin^2{\frac{rt}{\lambda}}ds_{n-1}^{2}\},
\end{split}
\end{equation*}
where $t\in(0,1+\epsilon]$ here. As before, we consider the restriction map:
\begin{equation*}
\begin{split}
\Riem_{{(\lambda,r)-\lens}(\epsilon)}^{+}(D^{n}(1+\epsilon))&\longrightarrow \Riem^{+}(D^{n}(1)),\\
g&\longmapsto g|_{D^{n}(1)},
\end{split}
\end{equation*}
and define $\Riem_{{(\lambda,r)-\lens}(0)}^{+}(D^{n})$ to be the image of this map. Finally, we define $\Riem_{{\lens}(0)}^{+}(D^{n})$ to be the union, over all pairs $\lambda>0, r\in(0,\lambda\pi)$, of the spaces $\Riem_{{(\lambda,r)-\lens}(0)}^{+}(D^{n})$. Recall that our original motivation was in gluing disk metrics together to obtain metrics on the sphere. In this case, it is clear that elements of $\Riem_{{\lens}(0)}^{+}(D^{n})$ may be smoothly attached to other elements of $\Riem_{{\lens}(0)}^{+}(D^{n})$ provided their boundaries correspond to complementary lenses; see Fig. \ref{lenscapglue}. 
\begin{figure}[!htbp]
\vspace{2cm}
\hspace{-5.0cm}
\begin{picture}(0,0)
\includegraphics{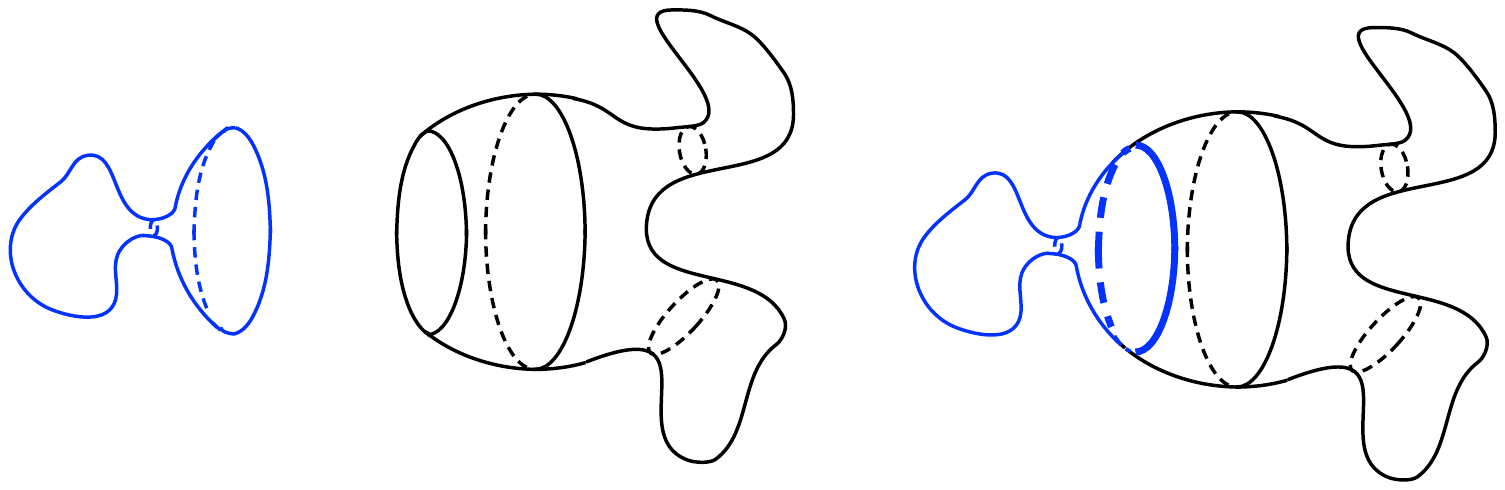}%
\end{picture}
\setlength{\unitlength}{3947sp}%
\begingroup\makeatletter\ifx\SetFigFont\undefined%
\gdef\SetFigFont#1#2#3#4#5{%
  \reset@font\fontsize{#1}{#2pt}%
  \fontfamily{#3}\fontseries{#4}\fontshape{#5}%
  \selectfont}%
\fi\endgroup%
\begin{picture}(5079,1559)(1902,-7227)

\end{picture}%
\caption{Combining metrics in $\Riem_{{\lens}(0)}^{+}(D^{n})$ which have complementary boundaries (left) gives rise to a psc-metric on $S^{n}$ (right)}
\label{lenscapglue}
\end{figure}   
\noindent It is possible to specify a map from $\Riem_{{\lens}(0)}^{+}(D^{n})\times\Riem_{{\lens}(0)}^{+}(D^{n})$ to $\Riem^{+}(S^{n})$, which is analogous to the map defined in \ref{joinmap}. Indeed, such a map will be very important for us. The construction in this case is more complicated and so we will postpone it until section \ref{bulbsection}.

\section{Torpedo Metrics and the Gromov-Lawson Construction}\label{torpconn}
We now turn our attention to the problem of combining pairs of psc-metrics on the sphere $S^{n}$ in order to obtain new psc-metrics, also on $S^{n}$. The best known example of this is the connected sum construction of Gromov and Lawson; see \cite{GL}. This is at the heart of our work. Essentially, for any metrics $g_0$ and $g_1$ on $S^{n}$ and provided $n\geq 3$, one may use this construction to obtain a new psc-metric on $S^{n}$ which is obtained by taking a geometric connected sum $g_0\# g_1$. We will shortly revisit this construction and so we will postpone the details until then. However, we should point out that the Gromov-Lawson construction requires we make a number of choices. Thus, it does not give rise to a well-defined binary operation on the space $\Riem^{+}(S^{n})$. It does in fact give rise to a mulitplication on certain quotient spaces of $\Riem^{+}(S^{n})$, such as the spaces of psc-concordance classes or psc-isotopy classes of metrics; this is something we will return to later on. There are ways, however, of achieving such an operation without taking a quotient, provide we restrict to certain subspaces of $\Riem^{+}(S^{n})$. Over the next two sections, we will spend time defining some of these subspaces as well as recalling the Gromov-Lawson construction. 

\subsection{Warped product metrics on the disk.} We begin by constructing a particular family of rotationally symmetric warped product metrics on the disk $D^{n}$. For our purposes, this is a metric on the disk which takes the form: 
\begin{equation}\label{warpmetric}
g^{\eta}=dt^{2}+{\eta}^{2}(t)ds_{n-1}^{2}
\end{equation}
where $t$ denotes the radial distance coordinate and where for some $b>0$, $\eta:[0,b)\rightarrow [0,\infty)$ is a smooth function satisfying:
\begin{enumerate}
\item[{\bf (i)}] $\eta(0)=\delta\sin(\frac{t}{\delta})$ for some $\delta>0$, when $t$ is near $0$,
\item[{\bf (ii)}] $\eta(t)>0$ when $t>0$. 
\end{enumerate} 
Although technically $dt^{2}+{\eta}^{2}(t)ds_{n-1}^{2}$ degenerates at $t=0$, the radius of the sphere factor closes in at the end $\{0\}\times S^{n-1}$ in such a way as to uniquely determine a smooth Riemannian metric on the disk $D^{n}$ which is rotationally symmetric with respect to the obvious action of orthogonal group $\O(n)$. This follows from the results of Chapter 1, Section 3.4 of \cite{P}. 
\begin{Remark}
We will intermittently regard $g^{\eta}$ as a metric on both $(0,\delta\frac{\pi}{2}]\times S^{n-1}$ and on $D^{n}$ depending on our circumstances. 
\end{Remark}
\noindent Finally, the scalar curvature $R$ of the metric $g^{\eta}$ at the point $(t, \theta)\in (0,\frac{\pi}{2}]\times S^{n-1}$ is given by the formula:
\begin{equation}\label{Rcurv}
R(t, \theta)=-2(n-1)\frac{{\eta}''(t)}{{\eta}(t)}+(n-1)(n-2)\frac{1-({\eta}'(t))^{2}}{{\eta}(t)^{2}}.
\end{equation}

\subsection{Infinitesimal torpedo metrics on the disk.}\label{inftorpmetric}
Recall that a torpedo metric on the disk $D^{n}$ is an $\O(n)$-symmetric metric which is round near the centre of the disk but transitions to a standard round Riemannian cylinder (neck) $[0,\epsilon]\times S^{n-1}$ near the boundary. For a detailed discussion of these metrics and their variants, see chapter 1 of \cite{Walsh1}. Roughly speaking, an infinitesimal torpedo metric takes this product structure only infinitesimally at the boundary of $D^{n}$. Importantly however, it smoothly attaches along the boundary to an end of a round cylinder $[0,b]\times S^{n-1}$. With this in mind, we fix a smooth function  $\eta_1:[0,\frac{\pi}{2}]\rightarrow[0,\infty)$ which satisfies the following requirements:
\begin{enumerate}
\item[{\bf (i)}] $\eta_1(t)=\sin{t}$, when $t$ is near $0$,
\item[{\bf (ii)}] $\eta_1(\frac{\pi}{2})=1$,
\item[{\bf (iii)}] ${\eta_1}''(t)<0$, when $0\leq t<\frac{\pi}{2}$,
\item[{\bf (iv)}] ${\eta_1}_{-}^{(k)}(\frac{\pi}{2})=0$, for all $k\geq 1$,
\end{enumerate} 
where ${\eta_1}_{-}^{(k)}$ represents the left sided $k$-th derivative of $\eta_1$. The graph of $\eta_1$ is depicted in Fig. \ref{inftorp} below. 
\begin{figure}[!htbp]
\vspace{1cm}
\hspace{5.5cm}
\begin{picture}(0,0)
\includegraphics{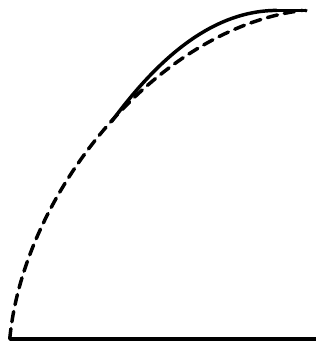}%
\end{picture}
\setlength{\unitlength}{3947sp}%
\begingroup\makeatletter\ifx\SetFigFont\undefined%
\gdef\SetFigFont#1#2#3#4#5{%
  \reset@font\fontsize{#1}{#2pt}%
  \fontfamily{#3}\fontseries{#4}\fontshape{#5}%
  \selectfont}%
\fi\endgroup%
\begin{picture}(5079,1559)(1902,-7227)
\put(1974,-7166){\makebox(0,0)[lb]{\smash{{\SetFigFont{10}{8}{\rmdefault}{\mddefault}{\updefault}{\color[rgb]{0,0,0}$0$}%
}}}}
\put(3414,-7166){\makebox(0,0)[lb]{\smash{{\SetFigFont{10}{8}{\rmdefault}{\mddefault}{\updefault}{\color[rgb]{0,0,0}$\frac{\pi}{2}$}%
}}}}ik
\end{picture}%
\caption{Comparing the graph of $\eta_1$ with the graph of the standard $\sin$ function represented by the dashed curve}
\label{inftorp}
\end{figure}   
Essentially, we want functions which behave like $\sin$ for the most part but end with all zero derivatives, as illustrated in Fig. \ref{inftorp} below. More generally, we obtain a family of functions $\{ \eta_\delta \}_{\delta>0}$ defined as follows:
\begin{equation*}
\begin{split}
\eta_\delta:[0,\delta\frac{\pi}{2}]&\longrightarrow [0,\infty)\\
t&\longmapsto \delta\eta_1(\frac{t}{\delta}).
\end{split}
\end{equation*}

Given any such function $\eta_\delta$ we obtain a metric $g^{\eta_\delta}=dt^{2}+{\eta_\delta}^{2}(t)ds_{n-1}^{2}$ as described above.
It is clear from formula \ref{Rcurv} and the conditions on the second derivative of $\eta_\delta$ that the scalar curvature of $g^{\eta_\delta}$ is always positive. (Recall we assume $n\geq 3$. When $n=2$ the best we can say is that $R\geq 0$.) We then obtain a space of metrics $\T_{\Riem}^{+}$, the subspace of $\Riem_{\cyl(0)}^{+}(D^{n})$ defined:
\begin{equation}\label{inftorpspace}
\T_{\Riem}^{+}:=\{g^{\eta_\delta}\in\Riem_{\cyl(0)}^{+}(D^{n}):\delta>0\},
\end{equation} 
where $g^{\eta_\delta}$ is given by formula \ref{warpmetric} above on $(0,\delta\frac{\pi}{2}]\times S^{n-1}$ but of course extends uniquely onto $D^{n}$. We make one final elementary observation concerning the fact that the restriction of the radius measuring map $\rho:\T_{\Riem}^{+}\rightarrow (0,\infty)$ is a bijection. 
\begin{Proposition}\label{welldefret}
For any constant $c>0$ and any $g^{\eta_\delta}\in \T_{\Riem}^{+}$, the metric $c^{2}g^{\eta_\delta}$ is exactly the element $g^{\eta_{c\delta}}\in \T_{\Riem}^{+}$.
\end{Proposition}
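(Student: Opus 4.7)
The plan is to verify the equality directly in polar coordinates via a simple change of radial variable. Starting from the defining formula
\[
g^{\eta_\delta} = dt^{2}+\eta_\delta^{2}(t)\,ds_{n-1}^{2}\qquad\text{on }(0,\delta\tfrac{\pi}{2}]\times S^{n-1},
\]
I would first write
\[
c^{2}g^{\eta_\delta} = c^{2}\,dt^{2} + c^{2}\eta_\delta^{2}(t)\,ds_{n-1}^{2},
\]
and then introduce the new radial coordinate $s=ct$, which sends $(0,\delta\tfrac{\pi}{2}]$ diffeomorphically onto $(0,c\delta\tfrac{\pi}{2}]$. Under this substitution $c^{2}\,dt^{2}=ds^{2}$, and the warping factor becomes $c\,\eta_\delta(s/c)$.

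The key identity is the scaling property of the family $\{\eta_\delta\}$ built into its very definition: since $\eta_\delta(t)=\delta\eta_{1}(t/\delta)$, we have
\[
c\,\eta_\delta(s/c)=c\delta\,\eta_{1}\!\left(\tfrac{s/c}{\delta}\right)=c\delta\,\eta_{1}\!\left(\tfrac{s}{c\delta}\right)=\eta_{c\delta}(s).
\]
Substituting back yields
\[
c^{2}g^{\eta_\delta}=ds^{2}+\eta_{c\delta}^{2}(s)\,ds_{n-1}^{2}\qquad\text{on }(0,c\delta\tfrac{\pi}{2}]\times S^{n-1},
\]
which is exactly the defining formula for $g^{\eta_{c\delta}}$. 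Since $c\delta>0$, the resulting metric lies in $\T_{\Riem}^{+}$.

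It only remains to check compatibility at the degenerate end $t=0$, where $g^{\eta_\delta}$ extends uniquely to a smooth $\O(n)$-symmetric metric on the disk by the cited result from Petersen's book. Rescaling by a positive constant manifestly preserves smoothness and $\O(n)$-symmetry, and the rescaled metric still satisfies the boundary condition $\eta(0)=0$, $\eta'(0)=1$ in the form required by that criterion (with new parameter $c\delta$); hence the smooth extension of $c^{2}g^{\eta_\delta}$ to $D^{n}$ coincides with the unique smooth extension of $g^{\eta_{c\delta}}$.

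There is really no hard step here; the proposition is essentially a bookkeeping verification that the one-parameter family was defined in exactly the self-similar way needed for rescaling to stay inside $\T_{\Riem}^{+}$. The only thing to be careful about is the change of radial range from $(0,\delta\tfrac{\pi}{2}]$ to $(0,c\delta\tfrac{\pi}{2}]$, which is harmless since the paper's convention is to pull all such metrics back to the fixed disk $D^{n}$ via the canonical rescaling map.
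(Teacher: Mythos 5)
Your proof is correct and takes essentially the same route as the paper: substitute $s=ct$ in the polar expression $dt^{2}+\eta_\delta(t)^{2}\,ds_{n-1}^{2}$ and use $\eta_\delta(t)=\delta\eta_1(t/\delta)$ to identify the rescaled warping function with $\eta_{c\delta}$. The additional paragraph about smooth extension across the origin is a harmless elaboration that the paper leaves implicit.
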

\begin{proof}
The element $g=dt^{2}+\eta_\delta(t)^{2}$ on $(0,\delta\frac{\pi}{2}]\times S^{n-1}$. Replacing $t$ with $\frac{s}{c}$ we see that $c^{2}g=ds^{2}+\eta_{{c}\delta}(s)^{2}ds_{n-1}^{2}$ on $(0,{c}\delta\frac{\pi}{2}]\times S^{n-1}$.
\end{proof}

\subsection{ A space of psc-metrics with torpedos} We now describe a subspace of $\Riem^{+}(S^{n})$ on which the construction described in section \ref{torprod} yields a product. When studying spaces of metrics on a manifold one often fixes a particular metric, called a reference metric, to be used to unambiguously specify coordinate balls or an exponential map. Although in principle the choice of reference metric does not matter, it is convenient in our case to use the standard round metric of radius $1$, $ds_n^{2}$, as a reference metric on $S^{n}$. Let $p$ be a point in $S^{n}$. A choice of orthonormal (with respect to $ds_{n}^{2}$) basis for the tangent space $T_{p}{S^{n}}$ to $S^{n}$ at $p$ gives rise to an isomorphism from $\mathbb{R}^{n}$ to $T_{p}{S^{n}}$. Composing this with the exponential map gives rise to a smooth pointed map $(\mathbb{R}^{n}, 0)\rightarrow (S^{n}, p)$ which restricts to an embedding on small disks around $0\in \mathbb{R}^{n}$. By precomposing with an appropriate rescaling we obtain an embedding of the standard unit disk $D^{n}$ into $S^{n}$. We will call this embedding $\phi_{p}$ and let $D_{p}$ denote its image in $S^{n}$. Finally, let $p'$ denote the antipodal point to $p$ on $S^{n}$, let $D_{p}'=\closure (S^{n}\setminus {D_{p}})$ and let $\phi_{p}':D^{n}\rightarrow D_{p}'$ denote the corresponding complementary embedding obtained from an appropriate restriction of the exponential map at $p'$. We now define a subspace of $\Riem^{+}(S^{n})$, which we denote $\Riem^{+}_{{\rm{torp}}(p_0)}(S^{n})$, as follows:
\begin{equation}\label{spacetorp}
\Riem^{+}_{{\rm{torp}}(p)}(S^{n})=\{g\in \Riem^{+}(S^{n}):{\phi_{p}^{*}}(g|_{D_{p}})\in\T_{\Riem}^{+}\},
\end{equation}
where, recall, $\T_{\Riem}^{+}$ is the space of infinitesimal torpedo metrics on $D^{n}$ defined in \ref{inftorpspace}.
This is known as the space of {\em psc-metrics with a torpedo at $p$}. Thus, each element of $\Riem^{+}_{{\rm{torp}}(p)}(S^{n})$ is a metric which has an infinitesimal torpedo-like ``cap" at the point $p$.
Furthermore, there is an ``uncapping" map $\Unc_{p}$, which removes this torpedo cap around $p$ by restricting such metrics to the complementary disk $D_{p}'$ (and then pulling back to the standard $D^{n}$). This map is defined as follows:
\begin{equation}\label{uncap}
\begin{split}
\Unc_{p}:\Riem^{+}_{{\rm{torp}}(p)}(S^{n})&\longrightarrow \Riem_{{\cyl}(0)}^{+}(D^{n})\\
g&\longmapsto (\phi_{p}')^{*}(g|_{D_{p}'}).
\end{split}
\end{equation}
Thus, $\Unc_{p}$ sends certain psc-metrics on $S^{n}$ to psc-metrics on $D^{n}$ with (infinitesimal) cylindrical boundaries.

Naively, in constructing a product on $\Riem^{+}_{{\rm{torp}}(p_0)}(S^{n})$, one might consider taking two psc-metrics with caps, removing the caps and then gluing them together after some appropriate rescaling. Using the joining map $J^{\cyl(f)}$ from \ref{joinmap} for some rescaling function $f:(0,\infty)\times (0,\infty)\rightarrow (0,\infty)$, and the uncapping map $\Unc_{p}$ defined above in \ref{uncap}, the composition map $J^{\cyl(f)}\circ(\Unc_{p}\oplus\Unc_{p})$ does precisely this. Unfortunately this produces a metric on a sphere with no base point (and thus no torpedo cap) and so a slightly more intricate multiplication is required. Before describing the more intricate construction, it is worth describing a version of this naive construction as it gets us most of the way there. 

We will begin with slight generalisation of the idea of a psc-metric with a torpedo cap. Suppose ${\bf p}=\{p_0,p_1,\cdots, p_k\}\subset S^{n}$ is a finite collection of points on $S^{n}$. We may specify around each of these points, closed disjoint normal coordinate neighbourhoods $D_{p_0}, \cdots D_{p_k}$ of the type described above, with corresponding diffeomorphisms $\phi_{p_i}:D^{n}\rightarrow D_{p_i}$ and complementary diffeomorphisms $\phi_{p_i}':D^{n}\rightarrow D_{p_i}'$ for each $i\in\{0,1,\cdots, k\}$. We now define the space $\Riem^{+}_{{\rm{torp}}({\bf p})}(S^{n})$ as follows:
\begin{equation*}
\Riem^{+}_{{\rm{torp}}({\bf p})}(S^{n})=\bigcap_{i=0}^{k}\Riem^{+}_{{\rm{torp}}(p_i)}(S^{n}).
\end{equation*}  
In Fig. \ref{cappysphere} below, we represent an element of $\Riem^{+}_{{\rm{torp}}({\bf p})}(S^{n})$ where ${\bf p}=\{p_0, p_1,p_2, p_3\}$ is a set of four distinct points on $S^{n}$.
\begin{figure}[!htbp]
\vspace{1.0cm}
\hspace{5.5cm}
\begin{picture}(0,0)
\includegraphics{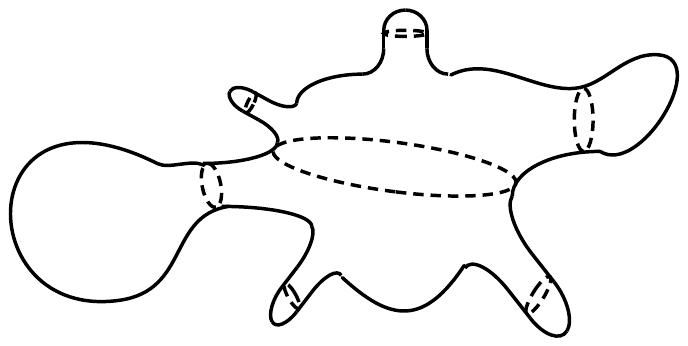}%
\end{picture}
\setlength{\unitlength}{3947sp}%
\begingroup\makeatletter\ifx\SetFigFont\undefined%
\gdef\SetFigFont#1#2#3#4#5{%
  \reset@font\fontsize{#1}{#2pt}%
  \fontfamily{#3}\fontseries{#4}\fontshape{#5}%
  \selectfont}%
\fi\endgroup%
\begin{picture}(5079,1559)(1902,-7227)
\put(2844,-5806){\makebox(0,0)[lb]{\smash{{\SetFigFont{10}{8}{\rmdefault}{\mddefault}{\updefault}{\color[rgb]{0,0,0}$p_3$}%
}}}}
\put(3824,-5306){\makebox(0,0)[lb]{\smash{{\SetFigFont{10}{8}{\rmdefault}{\mddefault}{\updefault}{\color[rgb]{0,0,0}$p_2$}%
}}}}
\put(3114,-7006){\makebox(0,0)[lb]{\smash{{\SetFigFont{10}{8}{\rmdefault}{\mddefault}{\updefault}{\color[rgb]{0,0,0}$p_0$}%
}}}}
\put(4614,-7106){\makebox(0,0)[lb]{\smash{{\SetFigFont{10}{8}{\rmdefault}{\mddefault}{\updefault}{\color[rgb]{0,0,0}$p_1$}%
}}}}
\end{picture}%
\caption{A sphere with four torpedo caps}
\label{cappysphere}
\end{figure}   
We will now make a couple of technical observations about the space $\Riem^{+}_{{\rm{torp}}({\bf p})}(S^{n})$. Firstly, the choice of orthonormal basis for each tangent space is unimportant.
\begin{Lemma}\label{caponb}
For $n\geq 3$, the subspace $\Riem^{+}_{{\rm{torp}}({\bf p})}(S^{n})\subset\Riem^{+}(S^{n})$ remains fixed if we vary the choice of orthonormal basis for $T_{p_{i}}S^{n}$ for each $p_i\in{\bf p}$.
\end{Lemma}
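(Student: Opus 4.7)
The plan is straightforward: the lemma reduces to the fact that every element of $\T_{\Riem}^{+}$ is a rotationally symmetric warped product metric and hence is fixed by the standard $O(n)$-action on $D^{n}$. I would set this up explicitly as follows.

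First, I would observe that any two choices of orthonormal basis (with respect to $ds_n^2$) for $T_{p_i}S^n$ differ by an orthogonal transformation. Under the construction preceding (\ref{spacetorp}), a choice of basis yields an isomorphism $\mathbb{R}^n \to T_{p_i}S^n$, and following with the exponential map and the rescaling gives the embedding $\phi_{p_i}: D^n \to S^n$. Two such embeddings $\phi_{p_i}$ and $\phi_{p_i}'$ coming from different orthonormal bases therefore satisfy
\begin{equation*}
\phi_{p_i}' = \phi_{p_i} \circ A,
\end{equation*}
where $A \in O(n)$ acts on $D^n$ via its standard linear action, and in particular $\phi_{p_i}(D^n) = \phi_{p_i}'(D^n) = D_{p_i}$.

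Second, given $g \in \Riem^{+}_{\torp(\mathbf{p})}(S^{n})$ defined using the embeddings $\phi_{p_i}$, we have $\phi_{p_i}^{*}(g|_{D_{p_i}}) = g^{\eta_{\delta_i}} \in \T_{\Riem}^{+}$ for some $\delta_i > 0$. Pulling back via the alternate embedding then yields
\begin{equation*}
(\phi_{p_i}')^{*}(g|_{D_{p_i}}) = A^{*}\bigl(\phi_{p_i}^{*}(g|_{D_{p_i}})\bigr) = A^{*} g^{\eta_{\delta_i}} = g^{\eta_{\delta_i}},
\end{equation*}
where the final equality is precisely the $O(n)$-invariance of the warped product metric $dt^2 + \eta_{\delta_i}(t)^2 ds_{n-1}^2$. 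Hence $g$ also belongs to the space defined using the alternate bases. Since this reasoning is symmetric in the two basis choices and applies at each $p_i \in \mathbf{p}$ independently, the two subspaces of $\Riem^{+}(S^{n})$ coincide.

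There is essentially no obstacle in this argument; the entire content is the rotational symmetry of the members of $\T_{\Riem}^{+}$, packaged so as to match the basis-ambiguity in the construction of $\phi_{p_i}$. The hypothesis $n \geq 3$ plays no role in the proof itself and is inherited from the standing assumption needed to ensure $\T_{\Riem}^{+} \subset \Riem_{\cyl(0)}^{+}(D^{n})$.
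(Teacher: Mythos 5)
Your proof is correct and takes exactly the approach the paper gestures at: the paper's proof simply says that the lemma ``essentially follows from the rotational symmetry of the caps'' and defers details to \cite{Walsh2}, and you have supplied those details — two orthonormal bases differ by an $O(n)$ transformation of $D^n$, and the warped product metrics in $\T_{\Riem}^{+}$ are $O(n)$-invariant. Your remark about the role of $n\geq 3$ is also accurate.
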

\begin{proof}
This essentially follows from the rotational symmetry of the caps. For a detailed proof, see \cite{Walsh2}.
\end{proof}
\noindent For a single point $p$, the topology of $\Riem^{+}_{{\rm{torp}}(p)}(S^{n})$ is also unaffected by the choice of $p$. In particular we have the following lemma.
\begin{Lemma}\label{caprot}
For $n\geq 3$ and for any $p,q\in S^{n}$, the spaces $\Riem^{+}_{{\rm{torp}}(p)}(S^{n})$ and $\Riem^{+}_{{\rm{torp}}(q)}(S^{n})$ are homeomorphic.
\end{Lemma}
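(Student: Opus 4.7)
The plan is to exploit the fact that the round sphere $(S^n, ds_n^2)$ is homogeneous under its isometry group $O(n+1)$, and therefore carry metrics between the two subspaces by pulling back along an isometry sending $p$ to $q$.

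First, choose $A \in O(n+1)$ with $A(p) = q$; such an element exists because $O(n+1)$ acts transitively on $S^n$. Pullback by $A$ gives a continuous self-map $A^{*}: \Riem^{+}(S^{n}) \to \Riem^{+}(S^{n})$, which is a homeomorphism with inverse $(A^{-1})^{*}$ since pullback by a diffeomorphism is continuous in the $C^{\infty}$-topology. The plan is then to show that $A^{*}$ sends $\Riem^{+}_{\torp(q)}(S^{n})$ bijectively onto $\Riem^{+}_{\torp(p)}(S^{n})$.

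The heart of the argument is understanding how $A$ interacts with the chosen normal coordinate charts $\phi_{p}$ and $\phi_{q}$. Since $A$ is an isometry of $ds_n^{2}$, it sends the exponential map at $p$ to the exponential map at $q$ and therefore carries the geodesic disk $D_{p}$ to $D_{q}$. Consequently $A \circ \phi_{p}: D^{n} \to D_{q}$ is itself a normal coordinate parametrisation of $D_{q}$ induced by some orthonormal basis of $T_{q}S^{n}$; i.e., there is some $R \in O(n)$ with
\begin{equation*}
A \circ \phi_{p} = \phi_{q} \circ R.
\end{equation*}
For $g \in \Riem^{+}_{\torp(q)}(S^{n})$ we then compute
\begin{equation*}
\phi_{p}^{*}\bigl((A^{*}g)|_{D_{p}}\bigr) = (A \circ \phi_{p})^{*}(g|_{D_{q}}) = R^{*}\bigl(\phi_{q}^{*}(g|_{D_{q}})\bigr).
\end{equation*}

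Now the main point: the right-hand side still lies in $\T_{\Riem}^{+}$. Indeed, each element of $\T_{\Riem}^{+}$ is by construction $O(n)$-symmetric (a rotationally symmetric warped product of the form $dt^{2} + \eta_{\delta}(t)^{2} ds_{n-1}^{2}$), so pullback by any $R \in O(n)$ fixes $\T_{\Riem}^{+}$ pointwise. Alternatively this is immediate from Lemma \ref{caponb}, which is precisely the statement that $\Riem^{+}_{\torp(p)}(S^{n})$ does not depend on the chosen orthonormal frame at $p$. Hence $A^{*}g \in \Riem^{+}_{\torp(p)}(S^{n})$. The same argument applied to $A^{-1}$ gives the reverse inclusion, and continuity of both $A^{*}$ and $(A^{-1})^{*}$ upgrades this bijection to a homeomorphism.

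The only step with any real content is the identity $A \circ \phi_{p} = \phi_{q} \circ R$, and this is standard Riemannian geometry: isometries intertwine exponential maps, $\exp_{q} \circ dA_{p} = A \circ \exp_{p}$, and $dA_{p}: T_{p}S^{n} \to T_{q}S^{n}$ is a linear isometry so it relates the two chosen orthonormal bases by an element of $O(n)$. I do not anticipate any obstacle; the lemma is essentially a homogeneity statement, and the $O(n)$-symmetry of the torpedo caps absorbs the ambiguity in the choice of frame at $q$.
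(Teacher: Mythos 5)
Your proof is correct and takes essentially the same approach as the paper: pull back along an isometry of the round reference metric carrying one base point to the other. The paper's one-line proof uses a specific rotation along a great circle through $p$ and $q$ and leaves the preservation of the torpedo condition implicit; you make explicit the key point that the ambiguity in the normal coordinate frame is absorbed by the $O(n)$-symmetry of torpedo metrics (Lemma \ref{caponb}), which is a worthwhile elaboration but not a different method.
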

\begin{proof}
Let $\ro_{(p,q)}$ denote the rotation from $p$ to $q$ along the great circle (with respect to the standard round metric) containing $p$ and $q$. Then the map which sends a metric $g$ on $S^{n}$ to the pull back metric $\ro_{(p,q)}^{*}g$ defines a homeomorphism from $\Riem^{+}_{{\rm{torp}}(q)}(S^{n})$ to $\Riem^{+}_{{\rm{torp}}(p)}(S^{n})$.
\end{proof}
Remaining a little longer with the case when ${\bf p}$ is a single point $p$ in $S^{n}$, it is worth pointing out that this space may be simplified somewhat by considering only torpedo caps of a fixed radius. For any $\delta>0$, let $\Riem^{+}_{{\rm{torp}}(p,\delta)}(S^{n})$ be the subspace of $\Riem^{+}_{{\rm{torp}}({p})}(S^{n})$ which is defined as follows:
\begin{equation*}
\Riem^{+}_{{\rm{torp}}(p,\delta)}(S^{n}):=\{g\in \Riem^{+}_{{\rm{torp}}(p)}(S^{n})\}:\rho(g|_{D_{p}})=\delta\}.
\end{equation*} 
Equivalently, this is the space of psc-metrics on $S^{n}$ so that $\phi_{p}^{*}(g|_{D_{p}})=g^{\eta_{\delta}}$, or more simply the space of psc-metrics on $S^{n}$ with a torpedo cap of radius $\delta$ about the point $p$. As the following lemma shows, up to homotopy, this space is no different from $\Riem^{+}_{{\rm{\torp}}({p})}(S^{n})$.
\begin{Lemma}\label{spheredefret}
For $n\geq 3$ and any $\delta>0$,  there is a deformation retract from the space $\Riem^{+}_{{\rm{torp}}({p})}(S^{n})$ onto its subspace $Riem^{+}_{{\rm{torp}}(p,\delta)}(S^{n})$.
\end{Lemma}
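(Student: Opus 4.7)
The plan is to contract the radius-of-torpedo-cap coordinate onto the fixed value $\delta$ by performing a global rescaling of the ambient metric. For each $g\in\Riem^{+}_{\torp(p)}(S^{n})$, its cap $\phi_{p}^{*}(g|_{D_{p}})=g^{\eta_{\rho(g)}}$ has some boundary radius $\rho(g)>0$, which depends continuously on $g$ in the $C^{\infty}$-topology. Crucially, the boundary radius of the cap agrees with the infinitesimal cylinder radius of the complement $\Unc_{p}(g)\in\Riem_{\cyl(0)}^{+}(D^{n})$, so rescaling the entire sphere metric by the square of a positive constant rescales both halves consistently and preserves the smooth matching across $\partial D_{p}$.

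Explicitly, I would define
\begin{equation*}
c(g,s)=\frac{(1-s)\rho(g)+s\delta}{\rho(g)},\qquad H(g,s)=c(g,s)^{2}\,g.
\end{equation*}
This is continuous in $(g,s)$ and satisfies $H(g,0)=g$. By Proposition \ref{welldefret},
\begin{equation*}
\phi_{p}^{*}\bigl(H(g,s)|_{D_{p}}\bigr)=c(g,s)^{2}g^{\eta_{\rho(g)}}=g^{\eta_{c(g,s)\rho(g)}}\in\T_{\Riem}^{+},
\end{equation*}
so $H(g,s)\in\Riem^{+}_{\torp(p)}(S^{n})$ throughout, and at $s=1$ the cap has radius $c(g,1)\rho(g)=\delta$, placing $H(g,1)$ in $\Riem^{+}_{\torp(p,\delta)}(S^{n})$. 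Positivity of scalar curvature is preserved under rescaling, since $R_{c^{2}g}=c^{-2}R_{g}$. Finally, if $g\in\Riem^{+}_{\torp(p,\delta)}(S^{n})$ already, then $\rho(g)=\delta$ forces $c(g,s)\equiv 1$, hence $H(g,s)=g$ for all $s\in[0,1]$, yielding the required (strong) deformation retract.

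The main point requiring care is the invocation of Proposition \ref{welldefret} to identify a rescaled torpedo as another torpedo of the new radius, together with the parallel observation that $c^{2}\bigl(dt^{2}+\rho(g)^{2}ds_{n-1}^{2}\bigr)$ is isometric, via the reparameterisation $s=ct$, to the cylinder $ds^{2}+(c\rho(g))^{2}ds_{n-1}^{2}$, so that the rescaled complement remains in $\Riem_{\cyl(0)}^{+}(D^{n})$ with boundary radius matching the rescaled cap. These two observations ensure that the image of $H$ lies in $\Riem^{+}_{\torp(p)}(S^{n})$; the rest is a transparent interpolation of a positive scalar, and I do not anticipate any real obstacle.
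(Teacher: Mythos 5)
Your proof is correct and takes essentially the same approach as the paper: a linear interpolation of the rescaling constant, appealing to Proposition \ref{welldefret} to identify the rescaled torpedo cap. The only cosmetic differences are the reversal of the homotopy parameter and your explicit remark that elements of $\Riem^{+}_{{\rm{torp}}(p,\delta)}(S^{n})$ are fixed pointwise throughout, which the paper leaves implicit in the observation that $r_0\circ i$ is the identity.
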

\begin{proof}
Let $i:\Riem^{+}_{{\rm{torp}}(p,\delta)}(S^{n})\hookrightarrow \Riem^{+}_{{\rm{torp}}(p)}(S^{n})$ denote the inclusion map.  
For each $s\in [0,1]$, let $r_s$ be the map defined:
\begin{equation*}
\begin{split}
r_s:\Riem^{+}_{{\rm{torp}}(p)}(S^{n})&\longrightarrow \Riem^{+}_{{\rm{torp}}(p)}(S^{n})\\
g&\longmapsto \frac{{\delta}^{2}g}{((1-s)\rho(g|_{D_p})+s{\delta})^{2}}.
\end{split}
\end{equation*}
It follows from Proposition \ref{welldefret} that for each $s\in [0,1]$, this map is well defined. Furthermore, it is immediate that $r_1$ is the identity map on $\Riem^{+}_{{\rm{torp}}(p)}(S^{n})$, $r_0$ maps into $\Riem^{+}_{{\rm{torp}}(p,\delta)}(S^{n})$ and that the composition $r_0\circ i$ is the identity map on 
$\Riem^{+}_{{\rm{torp}}(p,\delta)}(S^{n})$.
\end{proof}
\begin{Remark}
With somewhat more sophisticated tools, such as those utilised in the Gromov-Lawson construction as described in \cite{Walsh1}, one could prove a more general version of this homotopy equivalence for psc-metrics with multiple torpedo caps.  
\end{Remark}
\noindent A little later we will revisit the fact, proved in \cite{Walsh3}, that when $n\geq 3$, the subspace $\Riem^{+}_{\rm{torp}({\bf p})}(S^{n})$ is homotopy equivalent to the space $\Riem^{+}(S^{n})$. Now however, we will return to the problem of defining a product on the space $\Riem^{+}_{\rm{torp}({p})}(S^{n})$ for the case when $\bf p$ is a single point $p$. To do this we will need to spend a little more time on the more general case where $\bf p$ contains several points.

Let ${\bf p}=\{p_0,p_1,\cdots p_k\}$ and ${\bf q}=\{q_0, q_1, \cdots, q_l\}$ be two finite sets of points on $S^{n}$. We will assume that $p_i\neq p_j$ when $i\neq j$ and that $q_i\neq q_j$ when $i\neq j$ but make no assumptions about whether or not $p_i=q_j$. We now consider the corresponding spaces $\Riem_{{\rm torp}({\bf p})}^{+}(S^{n})$ and $\Riem_{{\rm torp}({\bf q})}^{+}(S^{n})$. For each pair of integers $(i,j)$ with $0\leq i\leq k$ and $0\leq j\leq l$ we define the {\em $ij$-uncapping map} $Unc_{ij}$ as follows:
\begin{equation}\label{ijuncap}
\begin{split}
\Unc_{ij}:\Riem_{{\rm torp}({\bf p})}^{+}(S^{n})\times \Riem_{{\rm torp}({\bf q})}^{+}(S^{n})&\longrightarrow \Riem_{\cyl(0)}^{+}(D^{n})\times \Riem_{\cyl(0)}^{+}(D^{n}), \\
(g, h)&\longmapsto  (\Unc_{p_i}(g), \Unc_{q_j}(h)).
\end{split}
\end{equation}
In simple terms, the map $\Unc_{ij}$ removes the torpedo caps at $p_i$ and $q_j$ on the metrics $g$ and $h$ respectively. After an appropriate rescaling, the resulting disks can be glued together along their boundaries. Thus, for each map $f:(0,\infty)\times(0,\infty)\rightarrow (0,\infty)$, we obtain the {\em $ij$-joining map}, $J_{ij}^{\cyl(f)}$, defined as follows:

\begin{equation}\label{Jij}
\begin{split}
J_{ij}^{\cyl(f)}:\Riem_{{\rm torp}({\bf p})}^{+}(S^{n})\times \Riem_{{\rm torp}({\bf q})}^{+}(S^{n})&\longrightarrow \Riem_{{\rm torp}({\{{\bf p}\setminus\{p_{i}\}\}}\cup{\{{\bf q}\setminus \{q_j\}})\}}^{+}(S^{n}),\\
(g,h)&\longmapsto J^{\cyl(f)}(\Unc_{ij}(g,h)),
\end{split}
\end{equation}
where $J^{\cyl(f)}$ is the map defined in \ref{joinmap}. Henceforth, we will usually suppress the function $f$ and simply write $J$ for $J^{\cyl(f)}$ and $J_{ij}$ for $J_{ij}^{\cyl(f)}$, knowing that some $f$ is fixed in the background. This construction is illustrated schematically (for some unspecified $f$) in Fig. \ref{cappyspherejoin} below where ${\bf p}=\{p_0, p_1, p_2, p_3\}$, ${\bf q}=\{q_0, q_1, q_2\}$, $i=1$ and $j=2$.
\begin{figure}[!htbp]
\vspace{2.0cm}
\hspace{-3.0cm}
\begin{picture}(0,0)
\includegraphics{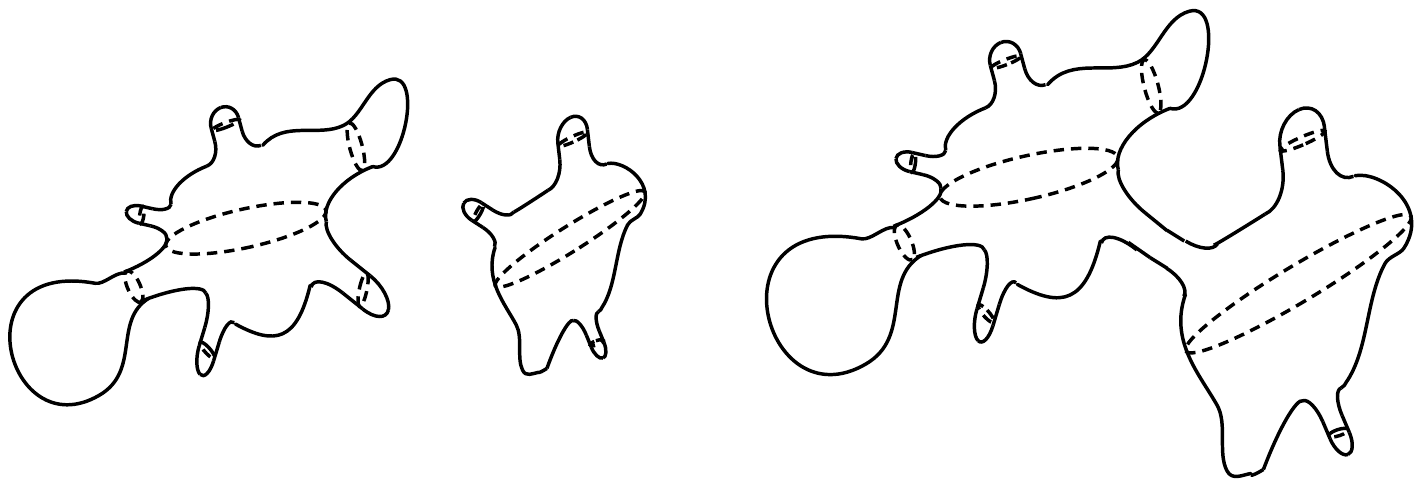}%
\end{picture}
\setlength{\unitlength}{3947sp}%
\begingroup\makeatletter\ifx\SetFigFont\undefined%
\gdef\SetFigFont#1#2#3#4#5{%
  \reset@font\fontsize{#1}{#2pt}%
  \fontfamily{#3}\fontseries{#4}\fontshape{#5}%
  \selectfont}%
\fi\endgroup%
\begin{picture}(5079,1559)(1902,-7227)
\put(2404,-5706){\makebox(0,0)[lb]{\smash{{\SetFigFont{10}{8}{\rmdefault}{\mddefault}{\updefault}{\color[rgb]{0,0,0}$p_3$}%
}}}}
\put(2824,-5206){\makebox(0,0)[lb]{\smash{{\SetFigFont{10}{8}{\rmdefault}{\mddefault}{\updefault}{\color[rgb]{0,0,0}$p_2$}%
}}}}
\put(2764,-6646){\makebox(0,0)[lb]{\smash{{\SetFigFont{10}{8}{\rmdefault}{\mddefault}{\updefault}{\color[rgb]{0,0,0}$p_0$}%
}}}}
\put(3714,-6406){\makebox(0,0)[lb]{\smash{{\SetFigFont{10}{8}{\rmdefault}{\mddefault}{\updefault}{\color[rgb]{0,0,0}$p_1$}%
}}}}
\put(4774,-6596){\makebox(0,0)[lb]{\smash{{\SetFigFont{10}{8}{\rmdefault}{\mddefault}{\updefault}{\color[rgb]{0,0,0}$q_0$}%
}}}}
\put(4600,-5206){\makebox(0,0)[lb]{\smash{{\SetFigFont{10}{8}{\rmdefault}{\mddefault}{\updefault}{\color[rgb]{0,0,0}$q_1$}%
}}}}
\put(4000,-5806){\makebox(0,0)[lb]{\smash{{\SetFigFont{10}{8}{\rmdefault}{\mddefault}{\updefault}{\color[rgb]{0,0,0}$q_2$}%
}}}}

\end{picture}%
\caption{The metric $J_{12}(g,h)$ (right) formed by combining $g$ and $h$ (left)}
\label{cappyspherejoin}
\end{figure}   

In the case when ${\bf p}={\bf q}=\{p_0\}$, the image of the map $J_{00}$ does not lie in $\Riem_{{\rm torp}({ p_0})}^{+}(S^{n})$. Thus, to define any sort of product we need to do some additional work. Henceforth we fix a {\em base point} $p_0\in S^{n}$. Let $p_1$ and $p_2$ be two distinct points on $S^{n}\setminus\{p_0\}$ and let ${\bf p}=\{p_0, p_1, p_2\}$. We now define a product on $\Riem_{{\rm torp}({ p_0})}^{+}(S^{n})$ as follows. Consider for each $j=1,2$, the map:
\begin{equation*}
J_{0j}:\Riem_{{\mathrm{torp}}({p_0 })}^{+}(S^{n})\times \Riem_{{\mathrm{torp}}({\bf p})}^{+}(S^{n})\longrightarrow \Riem_{{\mathrm{torp}}({\bf p}\setminus\{p_j\})}^{+}(S^{n}),
\end{equation*}
defined as in formula \ref{Jij}. Suppose we fix the second input metric as some $g_3\in\Riem_{{\rm torp}({\bf p})}^{+}(S^{n})$. Then for each of $j=1,2$, we obtain maps: 
\begin{equation}
\begin{split}
J_{0j}^{3}:\Riem_{{\mathrm{torp}}({p_0 })}^{+}(S^{n})&\longrightarrow \Riem_{{\mathrm{torp}}({\bf p}\setminus\{p_j\})}^{+}(S^{n}),\\
g&\longmapsto J_{0j}(g, g_3).
\end{split}
\end{equation}
Finally, we define a product on $\Riem_{{\mathrm{torp}}({p_0 })}^{+}(S^{n})$ by means of the following continuous map:
\begin{equation}\label{muprod1}
\begin{split}
\mu^{\torp}:\Riem_{{\mathrm{torp}}({p_0 })}^{+}(S^{n})\times \Riem_{{\mathrm{torp}}({p_0 })}^{+}(S^{n})&\longrightarrow \Riem_{{\mathrm{torp}}({p_0 })}^{+}(S^{n}),\\
(g,h)&\longmapsto J_{02}(h,J_{01}(g, g_3)).
\end{split}
\end{equation}

\begin{Example}\label{g3eg}
{\rm 
As an illustration, consider the metric $\bar{g}_{\lambda}$ obtained by gluing two copies the infinitesimal torpedo metric on the disk $D^{n}$, $g^{\eta_{\lambda}}$, together along the boundary in the obvious way. This metric metric is represented in Fig. \ref{Egg} below.
\begin{figure}[!htbp]
\vspace{1.5cm}
\hspace{5.0cm}
\begin{picture}(0,0)
\includegraphics{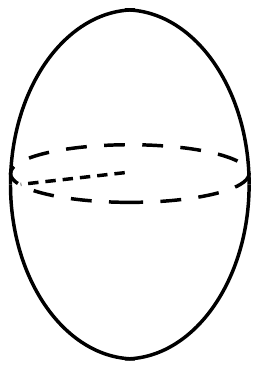}%
\end{picture}
\setlength{\unitlength}{3947sp}%
\begingroup\makeatletter\ifx\SetFigFont\undefined%
\gdef\SetFigFont#1#2#3#4#5{%
  \reset@font\fontsize{#1}{#2pt}%
  \fontfamily{#3}\fontseries{#4}\fontshape{#5}%
  \selectfont}%
\fi\endgroup%
\begin{picture}(5079,1559)(1902,-7227)
\put(2304,-6176){\makebox(0,0)[lb]{\smash{{\SetFigFont{10}{8}{\rmdefault}{\mddefault}{\updefault}{\color[rgb]{0,0,0}$\lambda$}%
}}}}
\put(2450,-5306){\makebox(0,0)[lb]{\smash{{\SetFigFont{10}{8}{\rmdefault}{\mddefault}{\updefault}{\color[rgb]{0,0,0}$p_0$}%
}}}}
\put(2450,-7240){\makebox(0,0)[lb]{\smash{{\SetFigFont{10}{8}{\rmdefault}{\mddefault}{\updefault}{\color[rgb]{0,0,0}$p_0'$}%
}}}}
\put(3214,-6206){\makebox(0,0)[lb]{\smash{{\SetFigFont{10}{8}{\rmdefault}{\mddefault}{\updefault}{\color[rgb]{0,0,0}{\rm Infinitesimally cylindrical here}}%
}}}}
\end{picture}%
\caption{The metric $\bar{g}_\lambda$}
\label{Egg}
\end{figure}   
Strictly speaking this is a metric with two torpedo caps, one at the north pole which we denote by $p_0$ and one at the south pole $p_0'$. For simplicity, we will just assume that $\lambda =1$. Now choose two points $p_1$ and $p_2$ in the interior of the southern hemisphere. For simplicity, we may as well choose $p_1$ and $p_2$ so that $p_0, p_1$ and $p_2$ are equidistant along a great circle. As the dimension of the underlying sphere $n$ is at least three, it is possible to ``push out" two torpedo caps of radius $\delta>0$ (for some sufficiently small $\delta>0$) at each of the points $p_1$ and $p_2$ to construct the psc-metric $g_3^{m}$ illustrated in Fig. \ref{g2} below. This follows from the work of Gromov and Lawson in \cite{GL} and is something we will discuss in more detail in the next section. Finally, in Fig. \ref{minemetricjoin2}, we depict the result of multiplying a pair of metrics $g,h\in\Riem_{\torp(p_0)}$ via the multiplication $\mu^{\torp}$ in \ref{muprod1}, where $g_3=g_{3}^{m}$, the metric constructed above. 
\begin{figure}[!htbp]
\vspace{1.5cm}
\hspace{5.5cm}
\begin{picture}(0,0)
\includegraphics{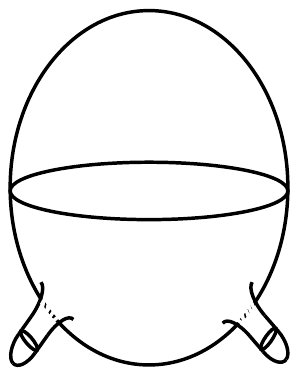}%
\end{picture}
\setlength{\unitlength}{3947sp}%
\begingroup\makeatletter\ifx\SetFigFont\undefined%
\gdef\SetFigFont#1#2#3#4#5{%
  \reset@font\fontsize{#1}{#2pt}%
  \fontfamily{#3}\fontseries{#4}\fontshape{#5}%
  \selectfont}%
\fi\endgroup%
\begin{picture}(5079,1559)(1902,-7227)
\put(2550,-5150){\makebox(0,0)[lb]{\smash{{\SetFigFont{10}{8}{\rmdefault}{\mddefault}{\updefault}{\color[rgb]{0,0,0}$p_0$}%
}}}}
\put(2000,-7161){\makebox(0,0)[lb]{\smash{{\SetFigFont{10}{8}{\rmdefault}{\mddefault}{\updefault}{\color[rgb]{0,0,0}$p_1$}%
}}}}
\put(3300,-7161){\makebox(0,0)[lb]{\smash{{\SetFigFont{10}{8}{\rmdefault}{\mddefault}{\updefault}{\color[rgb]{0,0,0}$p_2$}%
}}}}
\end{picture}%
\caption{The metric $g_3^{m}$ with three torpedo caps}
\label{g2}
\end{figure}   
\begin{figure}[!htbp]
\vspace{3cm}
\hspace{1.0cm}
\begin{picture}(0,0)
\includegraphics{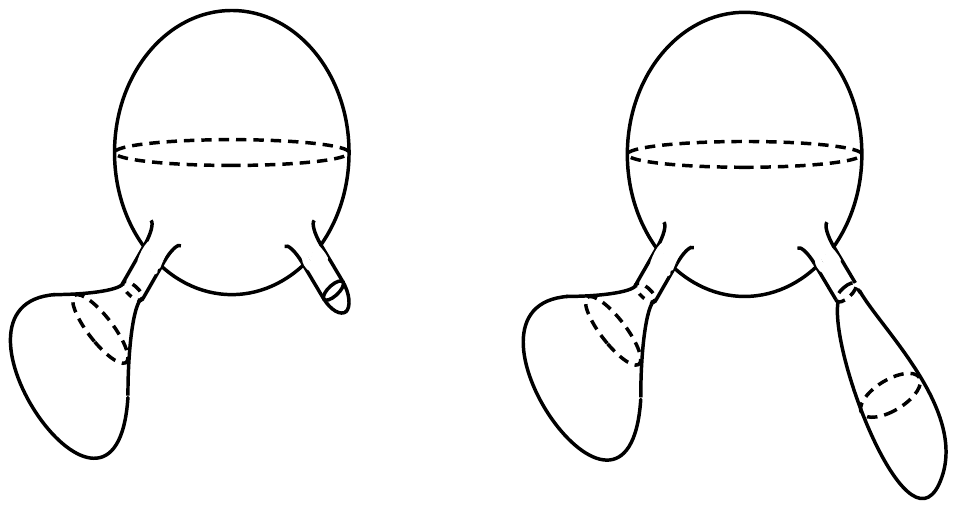}%
\end{picture}
\setlength{\unitlength}{3947sp}%
\begingroup\makeatletter\ifx\SetFigFont\undefined%
\gdef\SetFigFont#1#2#3#4#5{%
  \reset@font\fontsize{#1}{#2pt}%
  \fontfamily{#3}\fontseries{#4}\fontshape{#5}%
  \selectfont}%
\fi\endgroup%
\begin{picture}(5079,1559)(1902,-7227)
\put(2344,-6600){\makebox(0,0)[lb]{\smash{{\SetFigFont{10}{8}{\rmdefault}{\mddefault}{\updefault}{\color[rgb]{0,0,0}$g$}%
}}}}
\put(4784,-6600){\makebox(0,0)[lb]{\smash{{\SetFigFont{10}{8}{\rmdefault}{\mddefault}{\updefault}{\color[rgb]{0,0,0}$g$}%
}}}}
\put(6300,-6800){\makebox(0,0)[lb]{\smash{{\SetFigFont{10}{8}{\rmdefault}{\mddefault}{\updefault}{\color[rgb]{0,0,0}$h$}%
}}}}
\put(3000,-4500){\makebox(0,0)[lb]{\smash{{\SetFigFont{10}{8}{\rmdefault}{\mddefault}{\updefault}{\color[rgb]{0,0,0}$p_0$}%
}}}}
\put(3500,-6206){\makebox(0,0)[lb]{\smash{{\SetFigFont{10}{8}{\rmdefault}{\mddefault}{\updefault}{\color[rgb]{0,0,0}$p_2$}%
}}}}
\put(5370,-4500){\makebox(0,0)[lb]{\smash{{\SetFigFont{10}{8}{\rmdefault}{\mddefault}{\updefault}{\color[rgb]{0,0,0}$p_0$}%
}}}}
\end{picture}%
\caption{The metrics $J_{01}(g,g_3)$ (left) and $\mu^{\torp}(g,h)=J_{02}(h, J_{01}(g,g_3))$ (right)}
\label{minemetricjoin2}
\end{figure}   
}
\end{Example}
Of course there are several choices to be made in specifying such a map. For a start, there is the choice of rescaling function $f$ whose notation we have suppressed.  Furthermore, there is the choice of distinct points $p_1, p_2\in S^{n}\setminus \{p_0\}$, and the choice of metric $g_3\in \Riem_{{\rm torp}({\bf p})}^{+}(S^{n})$, where ${\bf p}=\{p_0, p_1, p_2\}$. We are assuming that $p_0$ is fixed throughout and so the map $\mu^{\torp}$ can be thought of as dependent on the choices of $f, p_1, p_2$ and $g_3$. As one may suspect, up to homotopy many of these choices have no impact.  We will shortly state a lemma, Lemma \ref{homotprod} below, which helps clarify the situation. Before doing this, it is time to revisit a construction which is at the heart of this paper: the Gromov-Lawson connected sum construction.

\subsection{The Gromov-Lawson connected sum construction.}
 We now consider a special case of the well-known Surgery Theorem due to Gromov and Lawson {\cite{GL}} and proved independently by Schoen and Yau {\cite{SY}}. In this paper we will concern ourselves only with the simplest type of surgery on smooth manifolds: the connected sum. Let $M^{n}$ be a smooth manifold with $n\geq 3$, $g$ a psc-metric on $M$, $p\in M$ a point and $B_g(p,\epsilon)$ a closed geodesic ball about $p$ with respect to the metric $g$. By specifying a curve $\gamma=\gamma_{g,p,\epsilon}$ of the type shown on the left of Fig. \ref{GLcurve}, it is possible to adjust the metric $g$ inside $B_g(p,\epsilon)$ by pushing out geodesic spheres in the space $B_g(p,\epsilon)\times [0,\infty)$ in a way that is determined by $\gamma$. More precisely, a geodesic sphere of radius $r$ is pushed out to lie in the slice $B_g(p,\epsilon)\times \{t\}$ where $(t,r)\in \gamma$. The induced metric on the resulting hypersurface $M_{\gamma}$, shown in the right image of Fig. \ref{GLcurve}, extends smoothly onto the rest of $M$ as a new metric $g'$. Essentially, the shape of $\gamma$ means that the resulting metric, $g'$, is very close to an infinitesimal torpedo metric with radius $\delta>0$ (which may be very small) on a neighbourhood of $p$. This is indicated by the shaded region of Fig. \ref{GLcurve}. 

\begin{figure}[!htbp]
\vspace{0.0cm}
\hspace{2cm}
\begin{picture}(0,0)
\includegraphics{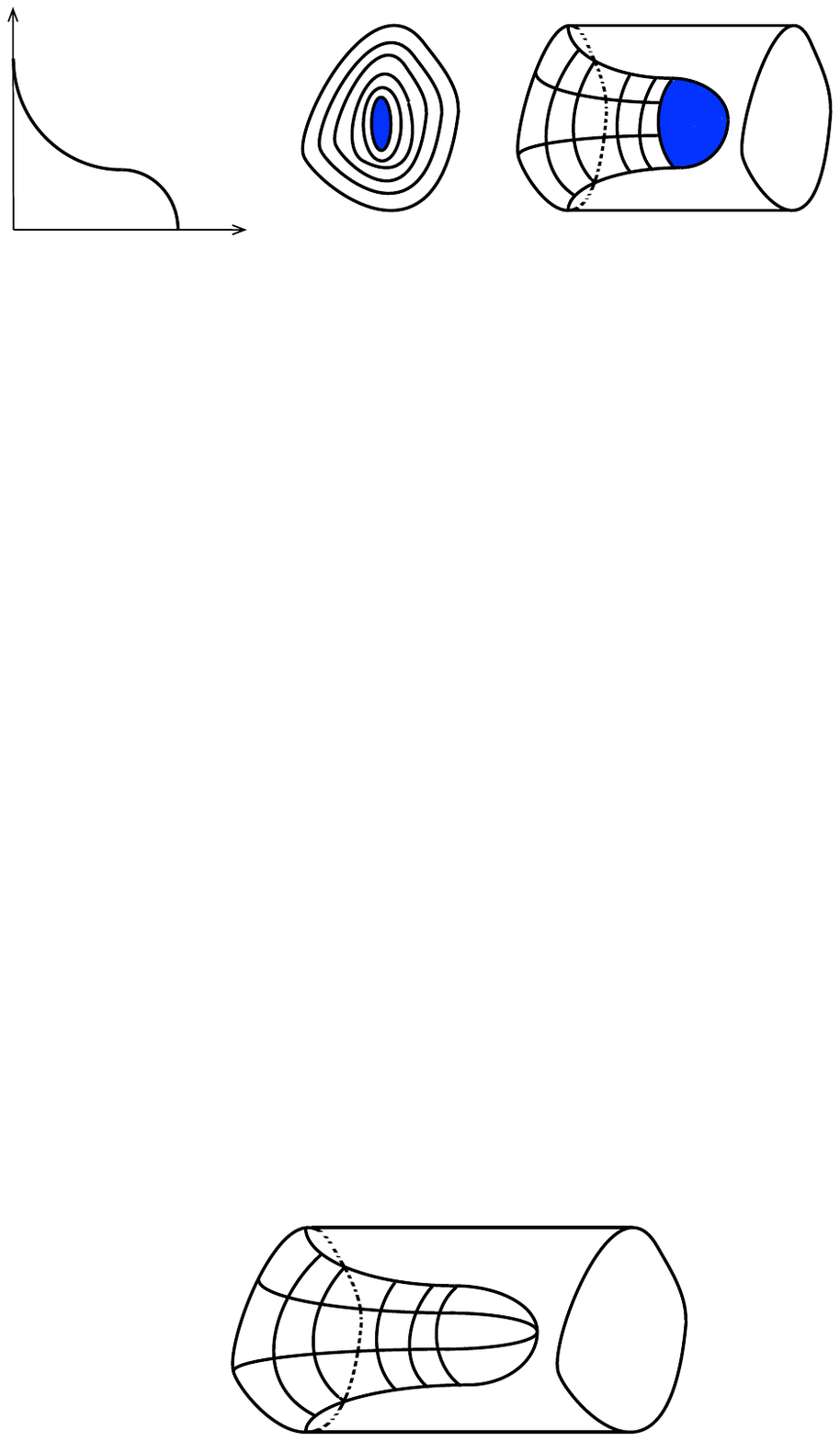}%
\end{picture}
\setlength{\unitlength}{3947sp}%
\begingroup\makeatletter\ifx\SetFigFont\undefined%
\gdef\SetFigFont#1#2#3#4#5{%
  \reset@font\fontsize{#1}{#2pt}%
  \fontfamily{#3}\fontseries{#4}\fontshape{#5}%
  \selectfont}%
\fi\endgroup%
\begin{picture}(5079,1559)(1902,-7227)
\put(1800,-6000){\makebox(0,0)[lb]{\smash{{\SetFigFont{10}{8}{\rmdefault}{\mddefault}{\updefault}{\color[rgb]{0,0,0}$r$}%
}}}}
\put(3700,-7100){\makebox(0,0)[lb]{\smash{{\SetFigFont{10}{8}{\rmdefault}{\mddefault}{\updefault}{\color[rgb]{0,0,0}$B_g(p,\epsilon)$}%
}}}}
\put(3000,-7150){\makebox(0,0)[lb]{\smash{{\SetFigFont{10}{8}{\rmdefault}{\mddefault}{\updefault}{\color[rgb]{0,0,0}$t$}%
}}}}
\put(4614,-7100){\makebox(0,0)[lb]{\smash{{\SetFigFont{10}{8}{\rmdefault}{\mddefault}{\updefault}{\color[rgb]{0,0,0}$(B_g(p,\epsilon)\times [0,\infty), g|_{B_g(p,\epsilon)}+dt^{2})$}%
}}}}
\put(5300,-6130){\makebox(0,0)[lb]{\smash{{\SetFigFont{10}{8}{\rmdefault}{\mddefault}{\updefault}{\color[rgb]{0,0,0}$M_{\gamma}$}%
}}}}
\end{picture}%
\caption{The curve $\gamma$ (left), geodesic ball $B_{g}(p,\epsilon)$ (middle) and the hypersurface obtained by pushing out geodesic spheres with respect to $\gamma$}
\label{GLcurve}
\end{figure}   

One of the main challenges of this construction was in demonstrating the fact that $\gamma$ can always be chosen so that positivity of the scalar curvature is maintained. As the space of psc-metrics is open and as the psc-metric induced on $M_{\gamma}$ is close to being ``standard" near $p$, a psc-isotopy (obtained from a linear homotopy through metrics) is then used to adjust $g'$ so that, near $p$, it is precisely an infinitesimal torpedo metric of radius $\delta$. Of course, if the geodesic spheres around $p$ are already standard spheres (as in the case when the original metric $g$ is a round metric) then no such final adjustment is necessary. This will very often be the case for us in this paper. Note that if $\delta>0$ is sufficiently small for the construction to work, then it works also for all $\delta'$ so that $0<\delta'<\delta$. Thus, given a pair of Riemannian manifolds $(M^{n}, g)$ and $(N^{n},h)$ where $g$ and $h$ are psc-metrics and $n\geq 3$, and a pair of points $p\in M$ and $q\in N$, a sufficiently small $\delta$ may be found for the construction of both $g'$ and $h'$. Removing the infinitesimal torpedo caps from these metrics gives rise to psc-metrics which may be identified along their respective boundaries to obtain a psc-metric on the connected sum $M\# N$.   

In \cite{Walsh1}, we further show that a psc-isotopy may be obtained between the metrics $g$ and $g'$. The main work is in contracting the curve $\gamma$ back to the vertical axis. Note that we show in Theorem 2.13 of \cite{Walsh1} that the whole construction (including this psc-isotopy) actually goes through for a compact family of psc-metrics. We restate the relevant aspects of this theorem as Lemma \ref{GLfamily} below.
\begin{Lemma}\label{GLfamily}
Let $M^{n}$ be a compact smooth manifold of dimension $n\geq 3$. Suppose $g_t, t\in K$ is a compact family of psc-metrics on $M$, indexed by a compact space $K$. Suppose also that $p_s, s\in I$ is a path in $M$. Then there is a continuous family of neighbourhoods $U_s$ of $p_s$ in $M$ as $s$ varies along $I$, a constant $\delta>0$ and a compact family of psc-metrics $g_{st}', (s,t)\in I\times K$, which satisfies the following:
\begin{enumerate}
\item{} For each $(s,t)\in I\times  K$, $g_{st}'=g_t$ outside of $U_s$. 
\item{} For $s\in I$, there is a continuously varying family of neighbourhoods $D_{s}$ of $p$, with $D_{s}\subset U_{s}$ so that for each $(s,t)\in I\times K$, $g_{st}'$ takes the form of an infinitesimal torpedo metric of radius $\delta$ on $D_s$.  
\item{} For each $s_0\in I$ there is a continuous homotopy through families of psc-metrics on $S^{n}$ which deforms the family $\{g_t:t\in K\}$ into $\{g_{{s_0}t}':t\in K\}$.
\end{enumerate} 
\end{Lemma}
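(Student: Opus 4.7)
The plan is to promote the single-metric Gromov--Lawson construction (recalled immediately above the lemma) to a version that varies continuously in the parameter $(s,t)\in I\times K$. First I would fix an auxiliary reference metric on $M$ and use its exponential map to produce, for each $s\in I$, a geodesic ball $U_s$ centred at $p_s$, with radius varying continuously in $s$. Since $I$ is compact and $p_s$ varies continuously, the radii can be taken uniformly bounded below, and one may carve out a smaller concentric continuously varying family of balls $D_s\subset U_s$ destined to hold the torpedo cap.

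Second, for each fixed $s$, I would apply the Gromov--Lawson surgery curve $\gamma=\gamma_{g_t,p_s,\epsilon}$ simultaneously to the compact family $\{g_t:t\in K\}$. The crucial observation is that positivity of scalar curvature along the pushed-out hypersurface reduces to certain differential inequalities for $\gamma$ whose coefficients involve the scalar curvature of $g_t$ and the principal curvatures of $g_t$-geodesic spheres inside $U_s$. Because $K$ is compact, all of these geometric quantities, together with their first few derivatives, are uniformly controlled; hence a single curve $\gamma_s$ and a single $\delta_s>0$ can be chosen that works for every $t\in K$. This yields a family $g_{st}'$ satisfying conditions (1) and (2) for that fixed $s$.

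Third, to make the construction continuous in $s$ I would require that the exponential coordinates, the geodesic balls $U_s$ and $D_s$, the surgery curves $\gamma_s$, and the linear psc-isotopy used to convert the pushed-out hypersurface into an exact infinitesimal torpedo metric, all depend continuously on $s$. Since the admissible values of $\delta_s$ form an open condition that holds continuously in $s$, and $I\times K$ is compact, the infimum of the admissible $\delta_s$ is strictly positive and a single uniform $\delta>0$ can be extracted. This is precisely the parametric content already isolated in Theorem 2.13 of \cite{Walsh1}, so the argument amounts to checking that its proof absorbs the additional compact parameter $s$ without modification.

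Finally, condition (3) follows from the homotopy obtained by contracting the curve $\gamma_{s_0}$ back to the vertical axis, as described just before the lemma. Performed simultaneously over all $t\in K$ at the fixed parameter $s_0$, this yields the desired homotopy through families of psc-metrics from $\{g_t:t\in K\}$ to $\{g_{s_0 t}':t\in K\}$. The main obstacle is the uniform choice of $\delta$: one must ensure the scalar curvature inequalities defining the Gromov--Lawson curve hold simultaneously for every $(s,t)\in I\times K$. Compactness of $I\times K$, together with continuity of scalar and sectional curvature in $(s,t)$ and the open nature of the surgery conditions, is exactly what makes this possible.
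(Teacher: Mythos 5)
Your proposal is correct and takes essentially the same approach as the paper: the paper's proof is simply to cite Theorem 2.13 of \cite{Walsh1} as containing the required parametric Gromov--Lawson construction, and you arrive at the same conclusion while additionally sketching the compactness argument (uniform control of curvature quantities over $I\times K$, hence a uniform $\delta>0$; continuity of the surgery data in $(s,t)$) that underlies that theorem. The added detail does not change the route; it is a faithful account of the standard proof one would find in \cite{Walsh1}.
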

\begin{proof}
This is a special case of Theorem 2.13 from \cite{Walsh1}.
\end{proof}
\noindent We have already seen an application of Lemma \ref{GLfamily} in the construction of the metric $g_3^{m}$ in Example \ref{g3eg}. Another  important application for us is the following lemma. This is actually a special case of a more general result proved in Lemma 3.3 of \cite{Walsh3}.
\begin{Lemma}\label{homequivspaces}
When $n\geq 3$ and ${\bf p}$ is a finite collection of points on the sphere $S^{n}$, the spaces $\Riem_{{\torp}^{+}({\bf p})}(S^{n})$ and $\Riem^{+}(S^{n})$ are homotopy equivalent.
\end{Lemma}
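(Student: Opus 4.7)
The plan is to exhibit the inclusion $i:\Riem_{\torp({\bf p})}^{+}(S^{n})\hookrightarrow \Riem^{+}(S^{n})$ as a homotopy equivalence by constructing, up to homotopy, a retraction via the parametric Gromov-Lawson construction of Lemma \ref{GLfamily}. Since Palais's theorem, cited in Section \ref{Hdefn}, guarantees that both spaces have the homotopy type of a CW complex, it suffices to show that $i$ is a weak equivalence, which in turn reduces to checking that every compact family of psc-metrics can be deformed into $\Riem_{\torp({\bf p})}^{+}(S^{n})$ and that, within the subspace, such a deformation can be kept inside $\Riem_{\torp({\bf p})}^{+}(S^{n})$.

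First I would show the surjectivity on homotopy groups. Given a compact parameter space $K$ (for instance $K=S^{j}$) and a continuous family $\{g_{t}\}_{t\in K}\subset \Riem^{+}(S^{n})$, choose pairwise disjoint closed coordinate neighbourhoods about the points $p_{1},\ldots,p_{k}$ of ${\bf p}$. Applying Lemma \ref{GLfamily} once at each $p_{i}$ in turn, with a uniform infinitesimal torpedo radius $\delta>0$ that exists by compactness of $K$, produces a family $\{g_{t}''\}_{t\in K}\subset \Riem_{\torp({\bf p})}^{+}(S^{n})$. Part (3) of the lemma concatenates the successive modifications into a single continuous homotopy of families of psc-metrics from $\{g_{t}\}$ to $\{g_{t}''\}$, giving the desired surjectivity of $i_{\ast}$.

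Next I would verify injectivity by applying the same argument to a family $\{g_{t}\}_{t\in K}$ that already lies in $\Riem_{\torp({\bf p})}^{+}(S^{n})$ (for instance with $K=D^{j}$ and a given boundary condition). Shrinking $\delta$ below the existing torpedo radii of the $g_{t}$, the Gromov-Lawson modification takes place strictly inside the existing torpedo caps. Since each intermediate metric remains $\O(n)$-symmetric and infinitesimally cylindrical at the edge of its cap, it stays in $\Riem_{\torp({\bf p})}^{+}(S^{n})$; the rescaling move of Proposition \ref{welldefret} then interpolates within $\T_{\Riem}^{+}$ to finish the homotopy inside the subspace. Combining both directions yields a weak equivalence, which upgrades to a homotopy equivalence by Palais.

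The main obstacle is the uniform, simultaneous control of the Gromov-Lawson parameters—the curve $\gamma$, the radius $\delta$, and the neighbourhoods $U_{s}$—across the compact family and across the $k$ distinct points of ${\bf p}$, together with the requirement that, when restricted to $\Riem_{\torp({\bf p})}^{+}(S^{n})$, the construction not destroy the torpedo caps already present at the other points of ${\bf p}$. Without a more delicate cutoff/partition-of-unity argument one cannot expect to assemble a single globally continuous retraction $\Riem^{+}(S^{n})\to \Riem_{\torp({\bf p})}^{+}(S^{n})$; the compact-family-by-compact-family reasoning above is the route used in Lemma 3.3 of \cite{Walsh3} and is what I would follow here.
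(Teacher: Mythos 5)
Your proposal follows the same strategy as the paper: exhibit the inclusion as a weak equivalence by showing, via the parametric Gromov--Lawson construction of Lemma \ref{GLfamily}, that any compact family can be pushed into $\Riem^{+}_{\torp({\bf p})}(S^{n})$ (equivalently, that $\pi_k(\Riem^{+}(S^{n}), \Riem^{+}_{\torp({\bf p})}(S^{n}))$ vanishes), then upgrade to a genuine homotopy equivalence using Palais's CW-dominance result and Whitehead's theorem. The paper's own proof is a one-paragraph sketch that delegates to Lemma 3.3 of \cite{Walsh3}, so your account is actually more detailed than the source, and your final paragraph correctly locates the technical burden in the uniform, simultaneous control of the GL parameters across the family.

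One point in your injectivity step deserves caution. You assert that the intermediate metrics of the Gromov--Lawson isotopy, when the modification is carried out inside an existing torpedo cap, ``remain $\O(n)$-symmetric and infinitesimally cylindrical'' and hence stay in $\Riem_{\torp({\bf p})}^{+}(S^{n})$. But membership in $\T_{\Riem}^{+}$ on $D_{p_i}$ requires the pulled-back restriction to be literally $g^{\eta_\delta}$ for a $\delta>0$, where $\{\eta_\delta\}$ is the specific one-parameter family fixed in section \ref{inftorpmetric}; the profile functions produced at intermediate times of the GL isotopy are not of this form, so the intermediate family does in general leave the subspace. To make the relative-homotopy argument work one needs a deformation of pairs: the restriction to $S^{k-1}$ must stay inside $\Riem^{+}_{\torp({\bf p})}(S^{n})$ at every time, not merely start and end there. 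The natural repair---consistent with the tools you cite---is to deform the boundary family only by the rescaling of Proposition \ref{welldefret} (which does preserve $\T_{\Riem}^{+}$) down to a uniform cap radius $\delta$, and then arrange the GL deformation of the interior so that it restricts to precisely this rescaling along $S^{k-1}$. This is the sort of delicacy the paper hides in its reference to \cite{Walsh3}.
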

\begin{proof}
 The idea is to use the construction in Lemma \ref{GLfamily} to show that elements of the relative homotopy groups $\pi_{k}(\Riem^{+}(S^{n}), \Riem^{+}_{{\torp}({\bf p})}(S^{n}))$ are trivial. This, coupled with a result of Palais which indicates that these spaces of psc-metrics are dominated by CW-complexes, allows us to conclude the result via a well-known theorem of Whitehead. Details can be found in Lemma 3.3 of \cite{Walsh3}.
\end{proof}

We close this section by clarifying an earlier comment concerning the multiplication map $\mu^{\torp}$ in \ref{muprod1}. Recall that this map depended on several choices: a rescaling function $f:(0, \infty)\times(0,\infty)\rightarrow (0,\infty)$, a pair $p_1, p_2 \in S^{n}\setminus\{p_0\}$ determining a triple of distinct points ${\bf p}=\{p_0, p_1, p_2\}$ (assuming $p_0$ is already fixed) and a psc-metric $g_3\in \Riem_{{\torp}({\bf p})}^{+}(S^{n})$. We now state a theorem, which makes use of Lemma \ref{GLfamily} above.

\begin{Lemma}\label{homotprod}
Let $f_1, f_2$ be continuous maps $(0, \infty)\times(0,\infty)\rightarrow (0,\infty)$ and let $p_1, p_2, p_1', p_2'\in S^{n}\setminus \{p_0\}$. Suppose also that $g_3\in \Riem_{{\rm torp}({\bf p})}^{+}(S^{n})$ and  $g_3'\in \Riem_{{\rm torp}({\bf p'})}^{+}(S^{n})$ where ${\bf p}=\{p_0, p_1, p_2\}$ and ${\bf p'}=\{p_0, p_1', p_2'\}$. Then the corresponding maps $\mu^{\torp}=\mu_{(f_1, p_1, p_2, g_3)}^{\torp}$ and $\mu^{\torp'}=\mu_{(f_2, p_1',p_2', g_3' )}^{\torp}$ are homotopic if and only if the metrics $g_3$ and $g_3'$ are psc-isotopic.
\end{Lemma}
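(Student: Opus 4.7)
The plan is to prove both directions. For the forward direction (if $g_3$ and $g_3'$ are psc-isotopic then $\mu^{\torp} \simeq \mu^{\torp'}$), the idea is to continuously deform the four parameters $(f_1, p_1, p_2, g_3)$ into $(f_2, p_1', p_2', g_3')$ through a path of valid choices, which automatically induces a homotopy of multiplication maps. First I would note that the space of continuous rescaling functions $(0,\infty)^2 \to (0,\infty)$ is contractible (convex under positive linear combinations), so any two choices are joined by an affine path. Next, for $n \geq 3$, the space of ordered pairs of distinct points in $S^n \setminus \{p_0\}$ is path-connected, so there is an ambient isotopy $\Phi_s$ of $S^n$ fixing $p_0$ with $\Phi_0 = \mathrm{id}$ and $\Phi_1(p_i) = p_i'$ for $i=1,2$. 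The family $\Phi_s^* g_3'$ is then a continuous path of three-capped psc-metrics ending at $\Phi_1^* g_3' \in \Riem_{\torp({\bf p})}^+(S^n)$, which is psc-isotopic to $g_3'$ in $\Riem^+(S^n)$. Finally, having matched all parameters except $g_3$, the hypothesis $g_3 \simeq g_3'$ in $\Riem^+(S^n)$ gives $g_3 \simeq \Phi_1^* g_3'$, and by Lemma \ref{homequivspaces} this isotopy may be taken within $\Riem_{\torp({\bf p})}^+(S^n)$; plugging it into the construction of $\mu^{\torp}$ yields the desired homotopy.

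For the reverse direction, the strategy is to evaluate both multiplication maps at a specially chosen pair of inputs from which $g_3$ and $g_3'$ can be recovered up to psc-isotopy. I would use the ``egg'' metric $\bar{g}_\lambda \in \Riem_{\torp(p_0)}^+(S^n)$ of Example \ref{g3eg}, whose uncapping at $p_0$ is itself an infinitesimal torpedo metric on the disk (the opposite hemisphere of the egg). The key claim is $J_{01}(\bar{g}_\lambda, g_3) \simeq g_3$ in $\Riem^+(S^n)$: removing the torpedo cap of $g_3$ at $p_1$ and reattaching an egg-hemisphere, which is itself a torpedo cap, produces a metric that differs from $g_3$ essentially only in the size of its cap near $p_1$. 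Lemma \ref{spheredefret} (applied after an isotopy extension bringing the new cap tip back to $p_1$) then provides the psc-isotopy. Iterating the same argument on $J_{02}$ yields $\mu^{\torp}(\bar{g}_\lambda, \bar{g}_\lambda) \simeq g_3$, and symmetrically $\mu^{\torp'}(\bar{g}_\lambda, \bar{g}_\lambda) \simeq g_3'$. A homotopy between $\mu^{\torp}$ and $\mu^{\torp'}$ evaluated at $(\bar{g}_\lambda, \bar{g}_\lambda)$ then supplies a path in $\Riem_{\torp(p_0)}^+(S^n)$ between these two values, which concatenates with the above psc-isotopies to produce the required $g_3 \simeq g_3'$.

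The hard part will be making the key claim $J_{01}(\bar{g}_\lambda, g_3) \simeq g_3$ fully rigorous. This amounts to showing that swapping one torpedo cap for another (up to radius rescaling and a small ambient diffeomorphism) is trivial at the level of psc-isotopy classes. The implementation leans on the family Gromov--Lawson construction (Lemma \ref{GLfamily}) to build a continuous family of psc-metrics interpolating between the two situations, on Lemma \ref{spheredefret} to absorb the cap-radius mismatch, and on a routine application of the isotopy extension theorem to move the cap tip back to $p_1$. Care must also be taken that these intermediate psc-isotopies live in an appropriate capped subspace so that the subsequent application of $J_{02}$ is valid; here Lemma \ref{homequivspaces} assures us that working modulo cap structure is equivalent to working in $\Riem^+(S^n)$.
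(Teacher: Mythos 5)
Your proof is essentially the same as the paper's: convexity of the rescaling-function space handles $f$, a continuously moving choice of auxiliary points handles $\{p_1,p_2\}$ vs.~$\{p_1',p_2'\}$, and evaluation of the homotopy $\mu_t^{\torp}$ at $(\bar g_1,\bar g_1)$ recovers a path from $g_3$ to $g_3'$ for the converse. The one point where you diverge in implementation is the ``moving points'' step: you invoke isotopy extension and pull back $g_3'$ along an ambient isotopy $\Phi_s$, whereas the paper reruns Lemma \ref{GLfamily} along the moving path. Your version has a small gap worth acknowledging: for $\Phi_s^*g_3'$ to actually lie in $\Riem^{+}_{\torp(\{p_0,\Phi_s^{-1}(p_1'),\Phi_s^{-1}(p_2')\})}(S^{n})$ the diffeomorphism $\Phi_s$ must carry the fixed reference charts $\phi_{p_i'}$ to the charts $\phi_{\Phi_s^{-1}(p_i')}$ (up to an $\O(n)$-rotation, by Lemma \ref{caponb}), and a generic isotopy extension does not do this; since $SO(n+1)$ fixing $p_0$ does not act transitively on ordered pairs in $S^n\setminus\{p_0\}$, you cannot simply use round isometries. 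This is repairable by modifying $\Phi_s$ near the moving points, but it needs to be said, and the paper's route via Lemma \ref{GLfamily} sidesteps the issue because that lemma produces its own continuously varying family of normal neighbourhoods. Your spelled-out verification $J_{01}(\bar g_\lambda,g_3)\simeq g_3$ in the converse direction is a nice concretisation of what the paper compresses into ``easily deformed using Lemma~\ref{GLfamily}.''
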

\begin{proof} Convexity of the space of maps $(0, \infty)\times(0,\infty)\rightarrow (0,\infty)$ means that the choice of rescaling map has no effect up to homotopy. Furthermore, it is an immediate consequence of Lemma \ref{GLfamily} that the process of pushing out a cap while maintaining positive scalar curvature can be done ``on the move", i.e. while the point is moving along a continuous path. Thus the choice of points, whether  $\{ p_1, p_2 \}$ or $\{p_1', p_2'\}$, has no effect up to homotopy either.

Now suppose $g_3$ and $g_3'$ are psc-isotopic. It is a straightforward consequence of Lemma \ref{GLfamily} that $g_3$ and $g_3'$ can be connected by a continuous path in $\Riem_{{\rm torp}({\bf p})}^{+}(S^{n})$. Homotopy equivalence of the maps $\mu^{\torp}$ and $\mu^{\torp'}$ follows easily.
Now suppose $\mu^{\torp}$ and $\mu^{\torp'}$ are homotopic. Let $\mu_t^{\torp}, t\in I$, be a continuous family of maps:
\begin{equation*}
\mu_t^{\torp}:\Riem_{{\mathrm{torp}}({p_0 })}^{+}(S^{n})\times \Riem_{{\mathrm{torp}}({p_0 })}^{+}(S^{n})\longrightarrow \Riem_{{\mathrm{torp}}({p_0 })}^{+}(S^{n}),
\end{equation*}
where $\mu_0^{\torp}=\mu^{\torp}$ and $\mu_1^{\torp}=\mu^{\torp'}$. Consider the path in $\Riem^{+}(S^{n})$ given by $t\mapsto \mu_{t}^{\torp}(\bar{g}_1, \bar{g}_1)$, where $\bar{g}_1$ is the metric we constructed before stating this lemma. This path is now easily deformed, using Lemma \ref{GLfamily}, into a path which connects the metrics $g_3$ and $g_3'$.
\end{proof}
\noindent As the space $\Riem^{+}(S^{n})$ is often not path connected, it is evident that there are many non-homotopic possibilities for the map $\mu^{\torp}$. In the next section, we will see that in order to make $\Riem_{{\mathrm{torp}}({p_0 })}^{+}(S^{n})$ into an $H$-space, we will have to restrict our choice of $g_3$ to metrics which lie in the same path component of the standard round metric.

\section{The $H$-Space Theorem.}\label{HSpaceThm3} We are now able to state and prove the first of our main results. Let $\mu^{\torp}$ be the map defined in \ref{muprod1} above with respect to some $4$-tuple $(f, p_1, p_2, g_3)$ where as before $f:(0,\infty)\times (0,\infty)\rightarrow (0,\infty)$ is a continuous map, $p_1, p_2\in S^{n}\setminus \{p_0\}$ are distinct points and $g_3\in \Riem_{{\rm torp}({\bf p})}^{+}(S^{n})$. We obtain the following theorem.
\begin{Theorem}\label{HTheorem}
Let $n\geq 3$ and let $\mu^{\torp}=\mu_{(f, p_1, p_1, g_3)}^{\torp}$ be the multiplication map given by formula \ref{muprod1}. In the case when the metric $g_3$ is psc-isotopic to the round metric $ds_{n}^{2}$, $\mu^{\torp}$ defines defines a homotopy product on $\Riem_{{\mathrm{torp}}({p_0 })}^{+}(S^{n})$ with homotopy identity $\bar{g}_1$, giving it the structure of an $H$-space. Furthermore, this product is both homotopy commutative and homotopy associative.
\end{Theorem}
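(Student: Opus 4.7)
The plan is to first use Lemma \ref{homotprod} to replace $g_{3}$ by a convenient explicit representative, and then verify the homotopy identity, commutativity, and associativity axioms in turn by exploiting rotational symmetries of $S^{n}$, the flexibility of Lemma \ref{GLfamily}, and the fact that $\bar{g}_{1}$ is by construction two infinitesimal torpedo disks glued along their common boundary. Since $g_{3}$ is assumed psc-isotopic to $ds_{n}^{2}$, Lemma \ref{homotprod} allows us to assume throughout that $g_{3}$ equals the metric $g_{3}^{m}$ of Example \ref{g3eg}, obtained from $ds_{n}^{2}$ by pushing out three infinitesimal torpedo caps at $p_{0},p_{1},p_{2}$ via Lemma \ref{GLfamily}.

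For the identity axiom I would analyse $\mu^{\torp}(\bar{g}_{1},g)=J_{02}(g,J_{01}(\bar{g}_{1},g_{3}^{m}))$. Since $\Unc_{p_{0}}(\bar{g}_{1})$ is itself a torpedo disk, $J_{01}(\bar{g}_{1},g_{3}^{m})$ amounts (after the rescaling dictated by $J^{\cyl(f)}$) to replacing the $p_{1}$-cap of $g_{3}^{m}$ by a rescaled torpedo cap and then filling it in. A parametrised Gromov-Lawson deformation (Lemma \ref{GLfamily} applied in reverse near $p_{1}$, contracting the curve $\gamma$ back to the axis while holding the caps at $p_{0},p_{2}$ fixed) then shows $J_{01}(\bar{g}_{1},g_{3}^{m})$ is psc-isotopic in $\Riem^{+}_{\torp(\{p_{0},p_{2}\})}(S^{n})$ to the metric $g_{2}^{m}$ obtained from $ds_{n}^{2}$ by pushing out only two torpedo caps, at $p_{0}$ and $p_{2}$. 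Finally $J_{02}(g,g_{2}^{m})\simeq g$ via a rescaling isotopy that contracts the round ``collar'' of $g_{2}^{m}$ lying between its two caps, leaving $g$ with its $p_{0}$-cap reconstituted. The argument for $\mu^{\torp}(g,\bar{g}_{1})\simeq g$ is the mirror image.

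For commutativity I would choose a rotation $R\in\mathrm{SO}(n+1)$ fixing $p_{0}$ and swapping $p_{1}$ and $p_{2}$, joined to the identity by a path $R_{t}$ inside the stabiliser $\mathrm{SO}(n)\subset\mathrm{SO}(n+1)$, which is path-connected since $n\geq 2$. The pullback $R^{*}g_{3}^{m}$ again lies in $\Riem^{+}_{\torp(\mathbf{p})}(S^{n})$ and is psc-isotopic to $g_{3}^{m}$, because both are obtained by pushing out three infinitesimal torpedo caps at the same set $\mathbf{p}$ on the round metric and Lemma \ref{GLfamily} (applied along $R_{t}$, which moves $p_{1}$ and $p_{2}$ continuously) produces the required path. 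Since $R$ interchanges which of the two empty ``slots'' of $g_{3}^{m}$ is filled by the first versus the second input, one has the equivariance $R^{*}\mu^{\torp}(g,h)=\mu^{\torp'}(h,g)$, where $\mu^{\torp'}$ uses $R^{*}g_{3}^{m}$. Combined with $R^{*}\mu^{\torp}(g,h)\simeq\mu^{\torp}(g,h)$ (again because $R$ is isotopic to the identity) and Lemma \ref{homotprod}, this yields $\mu^{\torp}(g,h)\simeq\mu^{\torp}(h,g)$. For associativity I would introduce a four-cap intermediary $g_{4}^{m}\in\Riem^{+}_{\torp(\{p_{0},p_{1},p_{2},p_{3}\})}(S^{n})$, constructed analogously from $ds_{n}^{2}$, and a three-fold product $\mu_{3}(g,h,k):=J_{03}(k,J_{02}(h,J_{01}(g,g_{4}^{m})))$. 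Both iterated products $\mu^{\torp}(\mu^{\torp}(g,h),k)$ and $\mu^{\torp}(g,\mu^{\torp}(h,k))$ are built by gluing two copies of $g_{3}^{m}$ along a torpedo boundary to form an effective four-capped intermediary; by Lemma \ref{GLfamily} this ``doubled'' intermediary is psc-isotopic to $g_{4}^{m}$, so both iterated products are homotopic to $\mu_{3}$ and hence to each other.

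The main obstacle I anticipate is making the identity step fully rigorous, specifically the assertion that $J_{01}(\bar{g}_{1},g_{3}^{m})$ is psc-isotopic in $\Riem^{+}_{\torp(\{p_{0},p_{2}\})}(S^{n})$ to $g_{2}^{m}$. This requires a Gromov-Lawson-style deformation that simultaneously smooths the ``double torpedo'' bump produced at $p_{1}$ back into the surrounding round geometry and adjusts the rescaling constants introduced by $J^{\cyl(f)}$, all while holding the torpedo caps at $p_{0}$ and $p_{2}$ rigidly in place. The parametrised form of Lemma \ref{GLfamily} is tailored to exactly this sort of simultaneous deformation, but the bookkeeping of the cap radii and rescaling functions is delicate and is the one place where the argument is not purely formal.
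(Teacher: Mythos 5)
Your proposal follows essentially the same route as the paper: fix $g_3 = g_3^m$ by Lemma \ref{homotprod}, verify the identity axiom by a $g$-independent psc-isotopy (using Lemma \ref{GLfamily}) that contracts the intermediary down to a single torpedo cap, prove commutativity by a rotation in $\mathrm{SO}(n)\subset\mathrm{SO}(n+1)$ swapping $p_1$ and $p_2$, and prove associativity by continuously moving caps between two joined copies of $g_3$. The one detail worth adding explicitly to your identity step is the paper's preliminary move of homotoping the rescaling function $f$ to the projection $\pi_\L$ through the convex space of maps $(0,\infty)^{2}\to(0,\infty)$, so that the input $g$ is never rescaled during attachment; this cleanly disposes of the ``bookkeeping of cap radii and rescaling constants'' that you rightly flag as the delicate point, while your four-cap intermediary $g_4^m$ for associativity is a tidy and equivalent reformulation of the paper's direct swap of caps between the two copies of $g_3$.
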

\begin{proof}
For convenience, we take $g_3$ to be the $3$-cap metric $g_3^{m}$ constructed in the previous section. This is reasonable given Lemma \ref{homotprod} and our hypothesis that all choices of $g_3$ are psc-isotopic to the standard round metric $ds_{n}^{2}$.
We begin by showing that the metric $\bar{g}_1$ constructed earlier plays the role of homotopy identity. We will show that the map $g\mapsto \mu^{\torp}(g,\bar{g}_1)$ is homotopic to the identity map $g\mapsto g$. The case of $g\mapsto \mu^{\torp}(\bar{g}_1, g)$ is completely analogous. For the most part this will involve a psc-isotopy of the metric $g_3$. To help see this we represent, for an arbitrary $g\in \Riem_{{\mathrm{torp}}({p_0 })}^{+}(S^{n})$, the element $\mu^{\torp}(g, \bar{g}_1)$ in Fig. \ref{homidentity} below. 
\begin{figure}[!htbp]
\vspace{3.0cm}
\hspace{-1.0cm}
\begin{picture}(0,0)
\includegraphics{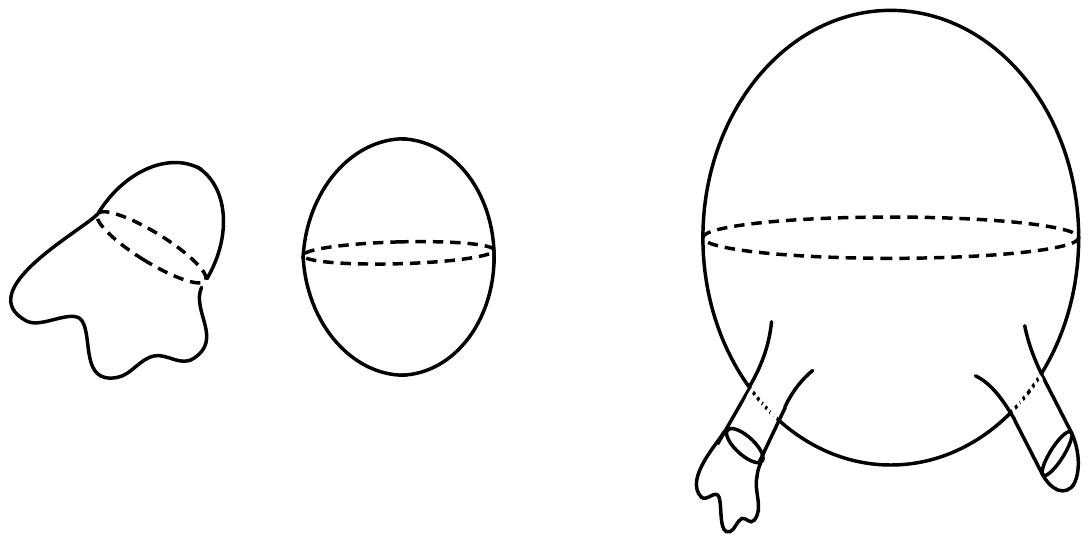}%
\end{picture}
\setlength{\unitlength}{3947sp}%
\begingroup\makeatletter\ifx\SetFigFont\undefined%
\gdef\SetFigFont#1#2#3#4#5{%
  \reset@font\fontsize{#1}{#2pt}%
  \fontfamily{#3}\fontseries{#4}\fontshape{#5}%
  \selectfont}%
\fi\endgroup%
\begin{picture}(5079,1559)(1902,-7227)
\put(2414,-6266){\makebox(0,0)[lb]{\smash{{\SetFigFont{10}{8}{\rmdefault}{\mddefault}{\updefault}{\color[rgb]{0,0,0}$g$}%
}}}}
\put(2820,-5050){\makebox(0,0)[lb]{\smash{{\SetFigFont{10}{8}{\rmdefault}{\mddefault}{\updefault}{\color[rgb]{0,0,0}$p_0$}%
}}}}
\put(3844,-6300){\makebox(0,0)[lb]{\smash{{\SetFigFont{10}{8}{\rmdefault}{\mddefault}{\updefault}{\color[rgb]{0,0,0}$\bar{g}_1$}%
}}}}
\put(3800,-4950){\makebox(0,0)[lb]{\smash{{\SetFigFont{10}{8}{\rmdefault}{\mddefault}{\updefault}{\color[rgb]{0,0,0}$p_0$}%
}}}}
\put(5940,-6000){\makebox(0,0)[lb]{\smash{{\SetFigFont{10}{8}{\rmdefault}{\mddefault}{\updefault}{\color[rgb]{0,0,0}$\mu^{\torp}(g, \bar{g}_1)$}%
}}}}
\end{picture}%
\caption{The metrics $g, \bar{g}_1$ and $\mu^{\torp}(g, \bar{g}_1)$}
\label{homidentity}
\end{figure}   
We will now construct a deformation of the map $g\mapsto \mu^{\torp}(g, \bar{g}_1)= J_{01}(g, J_{02}(\bar{g}_1, g_3))$ to the identity map. Recall that the map $J_{0j}$ is really $J_{0j}^{\cyl(f)}$ where $f:(0,\infty)\times (0,\infty)\rightarrow(0,\infty)$ is the rescaling map defined earlier. The first step is to replace $J_{01}^{\cyl(f)}$ with $J_{0j}^{\cyl((1-t)f+t\pi_\L)}$, where $t\in [0,1]$. Recall that $\pi_\L:(0,\infty)\times (0,\infty)\rightarrow(0,\infty)$ is projection onto the left factor. This induces a homotopy of the map $g\mapsto \mu^{\torp}(g, \bar{g}_1)$ to one which fixes the size of the left input metric $g$ during attachment to the right metric $ J_{02}(\bar{g}_1, g_3)$. We once again supress the scaling function in our notation. The main step now involves constructing a psc-isotopy of the metric $J_{02}(\bar{g}_1, g_3)$ which will induce  a psc-isotopy on $J_{01}(g, J_{02}(\bar{g}_1, g_3))$ turning it into $g$. Importantly, this construction uses no data arising from the metric $g$ and so easily goes through for all choices of $g$.  As a result, this psc-isotopy induces a homotopy of the map $g\mapsto \mu^{\torp}(g, \bar{g}_1)$ to the identity map $g\mapsto g$. 

We will now describe the psc-isotopy from $J_{01}(g, J_{02}(\bar{g}_1, g_3))$ to $g$. To aid the reader, a step by step description of this psc-isotopy is depicted in Fig. \ref{homidentity2}. The first image in Fig. \ref{homidentity2} is the metric $J_{01}(g, J_{02}(\bar{g}_1, g_3))$. As a first step, we use lemma \ref{GLfamily} to contract the cap at $p_2$ and obtain a psc-isotopy (by a simple rotation) of the remaining metric to one where the cap at $p_0$ is antipodal to the connecting metric with $g$. This is depicted in the second and third images in Fig. \ref{homidentity2}. The resulting metric then easily contracts, via the results of chapter 1 of \cite{Walsh1}, to the one shown in the fourth image in Fig. \ref{homidentity2} which takes the form of a standard torpedo metric on $S^{n}\setminus D_{p_1}$ and is connected to $g|_{D_{p_1}'}$ along the boundary. Finally, the neck of this torpedo is contracted down to yield precisely the metric $g$. 
\begin{figure}[!htbp]
\vspace{1.0cm}
\hspace{-4.5cm}
\begin{picture}(0,0)
\includegraphics{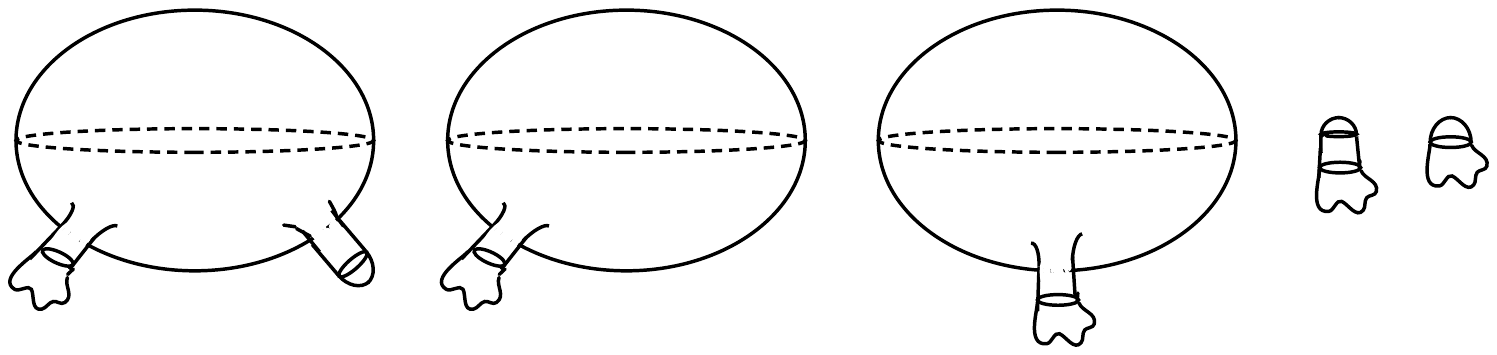}%
\end{picture}
\setlength{\unitlength}{3947sp}%
\begingroup\makeatletter\ifx\SetFigFont\undefined%
\gdef\SetFigFont#1#2#3#4#5{%
  \reset@font\fontsize{#1}{#2pt}%
  \fontfamily{#3}\fontseries{#4}\fontshape{#5}%
  \selectfont}%
\fi\endgroup%
\begin{picture}(5079,1559)(1902,-7227)
\put(1800,-7100){\makebox(0,0)[lb]{\smash{{\SetFigFont{10}{8}{\rmdefault}{\mddefault}{\updefault}{\color[rgb]{0,0,0}$(p_1)$}%
}}}}
\put(2700,-5400){\makebox(0,0)[lb]{\smash{{\SetFigFont{10}{8}{\rmdefault}{\mddefault}{\updefault}{\color[rgb]{0,0,0}$p_0$}%
}}}}
\put(3400,-7000){\makebox(0,0)[lb]{\smash{{\SetFigFont{10}{8}{\rmdefault}{\mddefault}{\updefault}{\color[rgb]{0,0,0}$(p_2)$}%
}}}}

\end{picture}%
\caption{An isotopy of the metric $J_{02}(\bar{g}_1, g_3)$ back to $g$. The brackets indicate the places where $p_1$ and $p_2$ lay before attachments} 
\label{homidentity2}
\end{figure}
   
Homotopy commutativity follows immediately from Lemma \ref{homotprod}. For a general $g_3$, just choose choose $p_1'=p_2$ and $p_2'=p_1$ in the statement of that lemma. In the case when $g_3=g_3^{m}$, this is most easily induced by continuous rotation of the sphere which swaps $p_1$ with $p_2$ as shown in Fig. \ref{rotate} below.

\begin{figure}[!htbp]
\vspace{1.5cm}
\hspace{1.0cm}
\begin{picture}(0,0)
\includegraphics{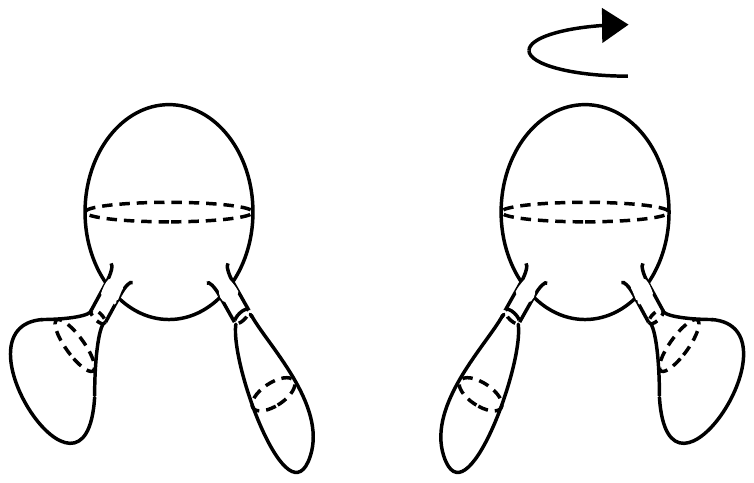}%
\end{picture}
\setlength{\unitlength}{3947sp}%
\begingroup\makeatletter\ifx\SetFigFont\undefined%
\gdef\SetFigFont#1#2#3#4#5{%
  \reset@font\fontsize{#1}{#2pt}%
  \fontfamily{#3}\fontseries{#4}\fontshape{#5}%
  \selectfont}%
\fi\endgroup%
\begin{picture}(5079,1559)(1902,-7227)
\put(2200,-6866){\makebox(0,0)[lb]{\smash{{\SetFigFont{10}{8}{\rmdefault}{\mddefault}{\updefault}{\color[rgb]{0,0,0}$g$}%
}}}}
\put(5500,-6766){\makebox(0,0)[lb]{\smash{{\SetFigFont{10}{8}{\rmdefault}{\mddefault}{\updefault}{\color[rgb]{0,0,0}$g$}%
}}}}
\put(4100,-7020){\makebox(0,0)[lb]{\smash{{\SetFigFont{10}{8}{\rmdefault}{\mddefault}{\updefault}{\color[rgb]{0,0,0}$h$}%
}}}}
\put(3500,-7020){\makebox(0,0)[lb]{\smash{{\SetFigFont{10}{8}{\rmdefault}{\mddefault}{\updefault}{\color[rgb]{0,0,0}$h$}%
}}}}
\end{picture}%
\caption{The metrics $\mu^{\torp}(g,h)$ and $\mu^{\torp}(h,g)$}
\label{rotate}
\end{figure}   

It remains to show homotopy associativity. This is similar to the proof of homotopy commutativity in that it involves moving the metrics which are attached to $g_3$, by their attaching tubes, along a closed bounded arc while adjusting the radius if necessary. Again we utilise Lemma \ref{GLfamily} to move the metrics continuously and maintain positivity of the scalar curvature. In particular, this means that for any $g,h,h'\in\Riem_{{\mathrm{torp}}({p_0 })}^{+}(S^{n})$ the metric $\mu^{\torp}(\mu^{\torp}(g,h), h')$ is psc-isotopic to $\mu^{\torp}(g,\mu^{\torp}(h,h'))$ as shown in Fig. \ref{assoc} below. 
\begin{figure}[!htbp]
\vspace{3.0cm}
\hspace{1.0cm}
\begin{picture}(0,0)
\includegraphics{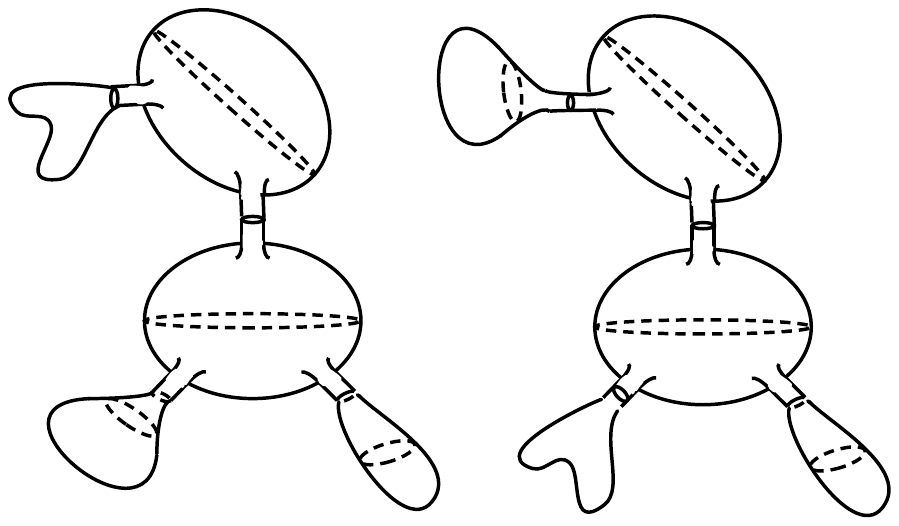}%
\end{picture}
\setlength{\unitlength}{3947sp}%
\begingroup\makeatletter\ifx\SetFigFont\undefined%
\gdef\SetFigFont#1#2#3#4#5{%
  \reset@font\fontsize{#1}{#2pt}%
  \fontfamily{#3}\fontseries{#4}\fontshape{#5}%
  \selectfont}%
\fi\endgroup%
\begin{picture}(5079,1559)(1902,-7227)
\put(2250,-6866){\makebox(0,0)[lb]{\smash{{\SetFigFont{10}{8}{\rmdefault}{\mddefault}{\updefault}{\color[rgb]{0,0,0}$g$}%
}}}}
\put(3850,-4866){\makebox(0,0)[lb]{\smash{{\SetFigFont{10}{8}{\rmdefault}{\mddefault}{\updefault}{\color[rgb]{0,0,0}$g$}%
}}}}
\put(2200,-4800){\makebox(0,0)[lb]{\smash{{\SetFigFont{10}{8}{\rmdefault}{\mddefault}{\updefault}{\color[rgb]{0,0,0}$h'$}%
}}}}
\put(5644,-6766){\makebox(0,0)[lb]{\smash{{\SetFigFont{10}{8}{\rmdefault}{\mddefault}{\updefault}{\color[rgb]{0,0,0}$h$}%
}}}}
\put(4400,-6866){\makebox(0,0)[lb]{\smash{{\SetFigFont{10}{8}{\rmdefault}{\mddefault}{\updefault}{\color[rgb]{0,0,0}$h'$}%
}}}}

\put(3500,-6766){\makebox(0,0)[lb]{\smash{{\SetFigFont{10}{8}{\rmdefault}{\mddefault}{\updefault}{\color[rgb]{0,0,0}$h$}%
}}}}
\end{picture}%
\caption{The metrics $\mu^{\torp}(\mu^{\torp}(g,h),h')$ and $\mu^{\torp}(g,\mu^{\torp}(h.h'))$}
\label{assoc}
\end{figure}   
With this is mind, let $g_3$ and $g_3'$ be two copies of the same intermediary metric with cap points $\{p_0, p_1, p_2\}$ and $\{ p_0', p_1', p_2'\}$ respectively. Obviously $p_0'$ is identified with the base point $p_0\in S^{n}$. We now fix a psc-isotopy $g(t)$ which moves $g(0)=J_{01}(g_3, g_3')$ to the metric $g(1)$ obtained by swapping the $p_1$ cap of $g_3$ with the $p_2'$ cap of $g_3'$. Now consider the map $(g,h,h')\mapsto \mu_0^{\torp}(g,h,h')=\mu^{\torp}(\mu^{\torp}(g,h),h'))$. Viewing the metric $J_{01}(g_3, g_3)$ as the metric $J_{01}(g_3, g_3')$ and then replacing it by $g(t)$ induces a homotopy of maps $\mu_t^{\torp}$ between $\mu_0^{\torp}$ which is defined above and $\mu_1^{\torp}$ defined by $\mu_1^{\torp}(g,h,h')=\mu^{\torp}(g,\mu^{\torp}(h,h'))$. 
\end{proof}

\begin{Corollary}\label{AbFG}
For $n\geq 3$, the group $\pi_1(\Riem^{+}(S^{n}), ds_{n}^{2})$ is Abelian.
\end{Corollary}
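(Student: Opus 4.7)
The plan is to apply the classical Eckmann-Hilton argument to the $H$-space structure from Theorem \ref{HTheorem}, and then to transfer the basepoint from the homotopy identity $\bar{g}_1$ to $ds_n^{2}$ using the homotopy equivalence of Lemma \ref{homequivspaces}.

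First, I would fix $g_3=g_3^{m}$ so that by Theorem \ref{HTheorem}, $(\Riem_{\torp(p_0)}^{+}(S^{n}),\mu^{\torp})$ is a homotopy commutative, homotopy associative $H$-space with homotopy identity $e=\bar{g}_1$. Given loops $\alpha,\beta\colon(I,\partial I)\to (\Riem_{\torp(p_0)}^{+}(S^{n}),e)$, define $F\colon I\times I\to \Riem_{\torp(p_0)}^{+}(S^{n})$ by $F(s,t)=\mu^{\torp}(\alpha(s),\beta(t))$. The $H$-space identity property gives $F(s,0)\simeq\alpha(s)\simeq F(s,1)$ and $F(0,t)\simeq\beta(t)\simeq F(1,t)$. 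The two boundary paths in $I\times I$ from $(0,0)$ to $(1,1)$ (right-then-up versus up-then-right) are homotopic rel endpoints, so $F$ sends them to homotopic loops based at $e$ that represent the concatenations $\alpha\cdot\beta$ and $\beta\cdot\alpha$. This yields abelianness of $\pi_1(\Riem_{\torp(p_0)}^{+}(S^{n}),e)$.

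Next, by Lemma \ref{homequivspaces}, the inclusion $\Riem_{\torp(p_0)}^{+}(S^{n})\hookrightarrow \Riem^{+}(S^{n})$ is a homotopy equivalence, inducing an isomorphism $\pi_1(\Riem_{\torp(p_0)}^{+}(S^{n}),e)\cong\pi_1(\Riem^{+}(S^{n}),e)$. To finish I would exhibit a psc-isotopy in $\Riem^{+}(S^{n})$ from $\bar{g}_1$ to $ds_n^{2}$ and then apply the change-of-basepoint isomorphism in $\pi_1$. Concretely, $\bar{g}_1$ is a rotationally symmetric warped product on $S^{n}$ whose warping function on $[0,\pi]$ agrees with $\sin$ near the poles and is infinitesimally cylindrical at the equator; the straight-line interpolation between this warping function and $\sin$ is positive on $(0,\pi)$, has non-positive second derivative, and satisfies $|\eta'|\leq 1$ throughout the homotopy, so both terms in formula \ref{Rcurv} remain non-negative and the scalar curvature is positive at every stage.

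The only mildly delicate step is verifying this explicit psc-isotopy between $\bar{g}_1$ and $ds_n^{2}$; the remainder is the formal Eckmann-Hilton manipulation plus a routine change of basepoint.
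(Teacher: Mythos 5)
Your proposal is correct and follows the same Eckmann--Hilton square argument that the paper uses, applied to the $H$-space of Theorem~\ref{HTheorem}. You additionally make explicit two housekeeping steps the paper leaves implicit --- transporting $\pi_1$ along the homotopy equivalence of Lemma~\ref{homequivspaces}, and moving the basepoint from $\bar{g}_1$ to $ds_n^2$ by a psc-isotopy --- and your explicit interpolation is sound, with one minor caveat: having both terms of formula~\ref{Rcurv} non-negative only gives $R\geq 0$ \emph{a priori}, so you should also record that at each stage and each point at least one term is strictly positive (near the poles $\eta_s=\sin$, so the second term equals $(n-1)(n-2)>0$; for $s>0$ and $t$ bounded away from the poles, $\eta_s''=(1-s)\eta''-s\sin t<0$ makes the first term positive; and the $s=0$ endpoint $\bar{g}_1$ is already known to be psc).
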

\begin{proof}
This is a well-known fact about $H$-spaces and so we will be terse. Suppose $Z$ is an $H$-space with multiplication $\mu$ and homotopy identity $e\in Z$. Let $\alpha, \beta:[0,1]\rightarrow Z$ be loops based at $e$ representing classes of $\pi_{1}(Z, e)$. Now consider the map $F:[0,1]\times [0,1]\rightarrow Z$ which is defined $F(s,t)=\mu(\alpha(s), \beta(t))$. Depending on one's choice of parameterisation, the restriction of $F$ firstly to the left and bottom sides of the square and secondly to the top and right sides of the square gives rise to maps which are respectively homotopy equivalent to the loop concatenations $\alpha\circ \beta$ and $\beta\circ \alpha$. Constructing the appropriate homotopy is then a straightforward exercise. 
\end{proof}

\section{Bulb Metrics}\label{bulbsection} 
In section \ref{disksphere}, we discussed two types of psc-metric on the disk $D^{n}$ which we could use in appropriate gluing constructions to obtain psc-metrics on the sphere $S^{n}$. The first of these, metrics which are cylindrical (at least infinitesimally) near the boundary, motivated the construction of the space of psc-metrics on $S^{n}$ with torpedo caps, and ultimately a product on this space. We will now carry out an analogous project for the second of our families of disk metrics with well-behaved boundaries: metrics which are sphere-like near the boundary. The construction here is more complicated. However, it will allow us to expose extra structure, beyond the $H$-space structure, on the space of psc-metrics on $S^{n}$.

\subsection{The space of psc-metrics with bulbs.}
Suppose we have a psc-metric $g$ on a smooth manifold $M^{n}$, a point $x\in M$ and a geodesic ball of radius $\epsilon>0$ about $x$, $B_g(x,\epsilon)$. Recall that when $n\geq3$, we may construct a psc-isotopy starting at the metric $g$, which fixes $g$ outside of $B_g(x,\epsilon)$, and which pushes out an infinitesimal torpedo metric of radius $\delta>0$ (dependent on $g,x$ and $\epsilon$), on a disk around $x$ and inside $B_g(x,\epsilon)$. This follows immediately from Lemma \ref{GLfamily}. We have called this process {\em pushing out a torpedo cap} around $x$. In the case when the starting metric $g$ is the round metric of radius $\lambda>0$ on the sphere $S^{n}$, we can be very explicit about the construction. With this in mind, we will describe a family of psc-metrics which we call ``bulb" metrics. These are obtained by pushing out a torpedo cap of radius $\delta$ inside a geodesic ball of radius $r$ on a round sphere of radius $\lambda$. We make this precise in the following lemma.
\begin{Lemma}\label{bulblemma}
Let $n\geq 3$, $\lambda>0$ and $r<(0,\lambda\frac{\pi}{2}]$. Let $p$ and $p'$ be antipodal points in $S^{n}$. Then there is a psc-metric $g_{bulb}(p,\lambda, r)$ on the sphere $S^{n}$ which satisfies the following conditions:
\begin{enumerate}
\item{} The metric $g_{bulb}(p,\lambda, r)$ is rotationally symmetric about the line in $\mathbb{R}^{n+1}$ which connects $p$ to its antipodal point $p'$.
\item{} There are continuous parameters $r'=r'(\lambda, r)\in(0,r]$ and $\delta=\delta(\lambda, r)\in(0,r]$ so that on the ball $B_\lambda (p',r')$, the metric $g_{bulb}(p,\lambda, r)$ restricts as an infinitesimal torpedo metric of radius $\delta$. 
\item{} On the annular region $\ann_{\lambda}(p';r',r)$ (taken with respect to the round metric of radius $\lambda$), the metric $g_{bulb}(p,\lambda, r)$ takes the form of the connecting tube from the Gromov-Lawson construction.
\item{} Outside of the ball $B_\lambda (p',r)$, the metric $g_{bulb}(p,\lambda, r)$ is precisely the lens metric $g_{\lens}(\lambda, \lambda\pi-r)$, the lens complement obtained by removing the geodesic ball $B_{\lambda}(p', r)$ from the original round sphere of radius $\lambda$.
\item{} The metric $g_{\bulb}(p,\lambda, \lambda\frac{\pi}{2})$ is the original round metric of radius $\lambda$.
\end{enumerate}
\end{Lemma}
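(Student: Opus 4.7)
The plan is to apply the Gromov-Lawson surgery procedure directly to the round metric of radius $\lambda$ on $S^n$, localised inside the geodesic ball $B_\lambda(p',r)$ centred at the antipodal point $p'$, taking advantage of the fact that the round metric is already rotationally symmetric about the $pp'$-axis and so the construction can be carried out equivariantly.

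First I would set up geodesic polar coordinates about $p'$, in which the round metric of radius $\lambda$ takes the warped product form $ds^{2}+\lambda^{2}\sin^{2}(s/\lambda)\,ds_{n-1}^{2}$ for $s\in[0,\lambda\pi]$. Inside $B_\lambda(p',r)$ (i.e. for $s\in[0,r]$) I would choose a Gromov-Lawson curve $\gamma$ of the type depicted in Fig.~\ref{GLcurve}, composed of three pieces: (a) near $s=r$, the curve agrees with the graph of the round warping function $s\mapsto\lambda\sin(s/\lambda)$, which guarantees a smooth infinitesimal match with the unchanged round metric on the complement $S^{n}\setminus B_\lambda(p',r)$; (b) a transition piece where the curve is bent inward while the scalar curvature remains strictly positive; (c) a horizontal terminating segment at height $\delta>0$, producing an infinitesimal torpedo cap of radius $\delta$ on a geodesic disk $B_\lambda(p',r')$ for some $r'\in(0,r]$. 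The resulting hypersurface inherits a psc-metric which, pulled back to $S^{n}$ via the obvious projection, I define to be $g_{\bulb}(p,\lambda,r)$.

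Condition (1) is immediate: because $\gamma$ lives in the $(s,\text{radius})$-plane and the whole surgery only modifies the warping of the latitude spheres about $p'$, the full $SO(n)$-symmetry about the $pp'$-axis is preserved. Conditions (2), (3), (4) read off directly from the three pieces of $\gamma$ described above. Continuity of the parameters $r'(\lambda,r)$ and $\delta(\lambda,r)$ follows from the compact-family version of the construction in Lemma~\ref{GLfamily}, applied to the one-parameter family of round metrics indexed by $(\lambda,r)$. For (5), the case $r=\lambda\frac{\pi}{2}$ is the degenerate endpoint: one either defines $g_{\bulb}(p,\lambda,\lambda\frac{\pi}{2})$ to be the round metric by convention, or one selects the family of curves $\gamma$ so that as $r\uparrow\lambda\frac{\pi}{2}$ the curve relaxes back to the vertical axis $\{s\mapsto\lambda\sin(s/\lambda)\}$ and the construction returns the unperturbed round metric.

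The main technical obstacle is ensuring that the curve $\gamma$ can simultaneously (i) match infinitesimally with the round warping function at $s=r$, (ii) terminate horizontally at height $\delta$, and (iii) keep the scalar curvature formula (\ref{Rcurv}) positive throughout the transition, all while depending continuously on $(\lambda,r)$. This is precisely the content of the Gromov-Lawson construction as refined in \cite{Walsh1} and packaged for compact families in Lemma~\ref{GLfamily}, so the step reduces to invoking that lemma with $K$ the compact parameter set $\{(\lambda,r)\}$ and $p_{s}\equiv p'$; no further analytic work beyond what is already in the literature is required.
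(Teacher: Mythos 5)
Your proposal is correct and follows essentially the same route as the paper, which simply states that the lemma is an easy consequence of Lemma~\ref{GLfamily}; you have just spelled out the details (choice of the Gromov--Lawson curve $\gamma$ in polar coordinates about $p'$, rotational equivariance, and how the three pieces of $\gamma$ deliver conditions (2)--(4)) that the paper leaves implicit.
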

\begin{proof}
This is an easy consequence of Lemma \ref{GLfamily}.
\end{proof}
The metric $g_{\bulb}(p,\lambda, r)$ obtained by this construction will be known as a {\em $(\lambda,r)$-bulb metric}, or simply a {\em bulb}. Needless to say, all such metrics are psc-isotopic to the original round metric via Lemma \ref{GLfamily}. We point out that this metric decomposes the sphere naturally into regions:
\begin{equation}
S^{n}=B_{\lambda}(p,\lambda\pi-r)\cup\ann_{\lambda}(p';r',r)\cup B_{\lambda}(p', r),
\end{equation}
on which the metric takes the forms indicated in Fig. \ref{bulb} below. We will refer to these pieces respectively as the {\em head}, {\em neck} and {\em cap} of the bulb metric $g_{\bulb}(p,\lambda, r)$. Furthermore the quantities $\lambda, r$ and $\delta=\delta(\lambda,r)$ will be known as the {\em head radius}, {\em head angle} and {\em cap radius} respectively. The head metric, which is of course the metric $\lambda^{s}ds_{n}^{2}|_{B_\lambda(p,\lambda\pi-r)}$, will be denoted $g_{\head}(p,\lambda,r)$. Note that in the case where $r=\lambda\frac{\pi}{2}$, the head and cap are the respective hemispheres of radius $\lambda$ about $p$ and $p'$ while the neck is simply the $(n-1)$-dimensional equator.
\begin{figure}[!htbp]
\vspace{2cm}
\hspace{-1cm}
\begin{picture}(0,0)
\includegraphics{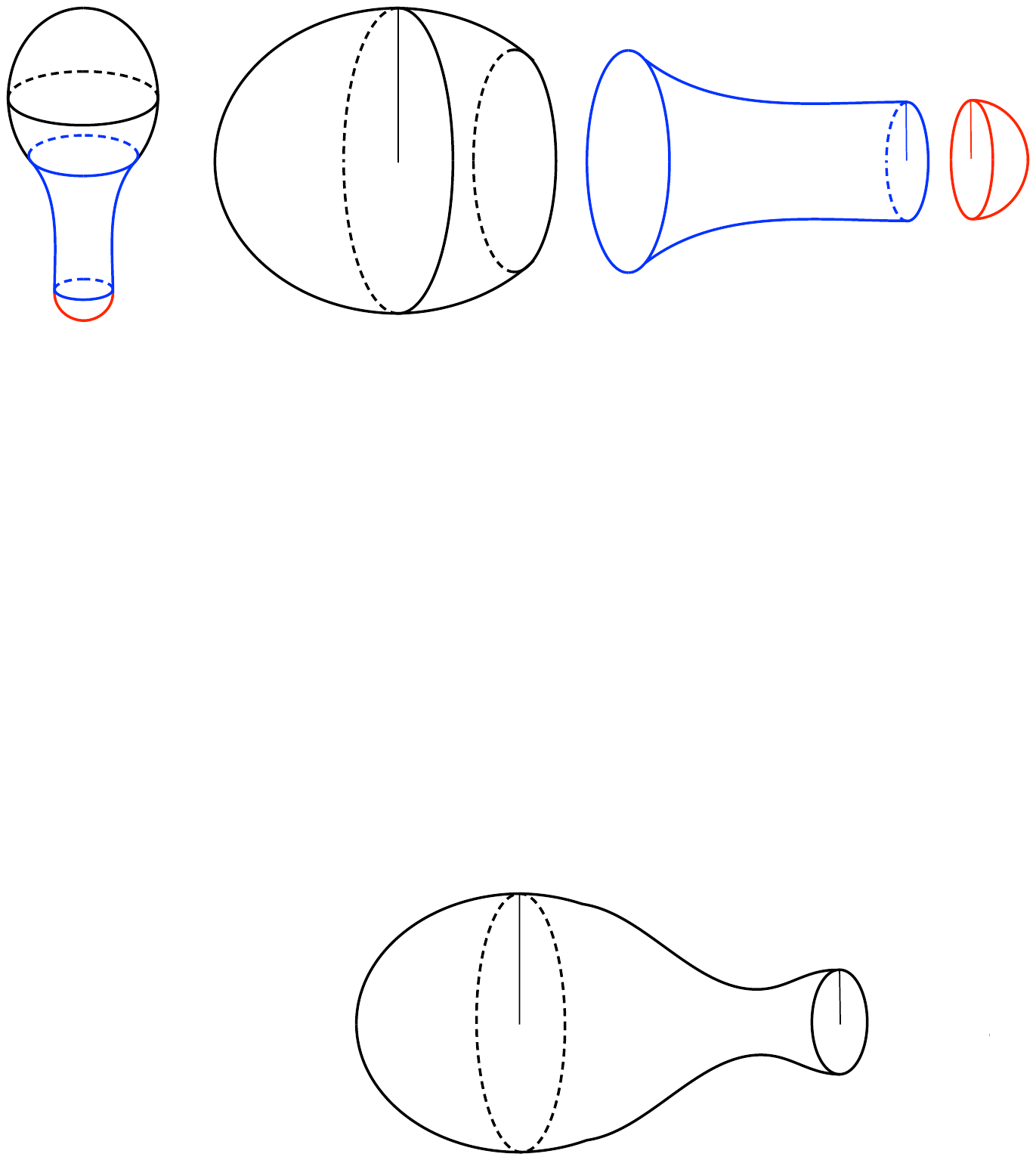}%
\end{picture}
\setlength{\unitlength}{3947sp}%
\begingroup\makeatletter\ifx\SetFigFont\undefined%
\gdef\SetFigFont#1#2#3#4#5{%
  \reset@font\fontsize{#1}{#2pt}%
  \fontfamily{#3}\fontseries{#4}\fontshape{#5}%
  \selectfont}%
\fi\endgroup%
\begin{picture}(5079,1559)(1902,-7227)
\put(4320,-5300){\makebox(0,0)[lb]{\smash{{\SetFigFont{10}{8}{\rmdefault}{\mddefault}{\updefault}{\color[rgb]{0,0,0}$\lambda$}%
}}}}
\put(7980,-5600){\makebox(0,0)[lb]{\smash{{\SetFigFont{10}{8}{\rmdefault}{\mddefault}{\updefault}{\color[rgb]{0,0,0}$\delta$}%
}}}}
\put(2400,-6900){\makebox(0,0)[lb]{\smash{{\SetFigFont{10}{8}{\rmdefault}{\mddefault}{\updefault}{\color[rgb]{0,0,0}$p'$}%
}}}}
\put(1500,-6700){\makebox(0,0)[lb]{\smash{{\SetFigFont{10}{8}{\rmdefault}{\mddefault}{\updefault}{\color[rgb]{0,0,0}$B_{\lambda}(p', r')$}%
}}}}
\put(1150,-6300){\makebox(0,0)[lb]{\smash{{\SetFigFont{10}{8}{\rmdefault}{\mddefault}{\updefault}{\color[rgb]{0,0,0}$\ann_{\lambda}(p'; r',r)$}%
}}}}
\put(1000,-4900){\makebox(0,0)[lb]{\smash{{\SetFigFont{10}{8}{\rmdefault}{\mddefault}{\updefault}{\color[rgb]{0,0,0}$B_{\lambda}(p, \lambda\pi-r)$}%
}}}}
\put(2400,-4700){\makebox(0,0)[lb]{\smash{{\SetFigFont{10}{8}{\rmdefault}{\mddefault}{\updefault}{\color[rgb]{0,0,0}$p$}%
}}}}
\put(4100,-6900){\makebox(0,0)[lb]{\smash{{\SetFigFont{10}{8}{\rmdefault}{\mddefault}{\updefault}{\color[rgb]{0,0,0}$g_{\lens}(\lambda,\lambda\pi-r)$}%
}}}}
\put(8100,-6300){\makebox(0,0)[lb]{\smash{{\SetFigFont{10}{8}{\rmdefault}{\mddefault}{\updefault}{\color[rgb]{0,0,0}$g^{\eta_{\delta}}$}%
}}}}
\end{picture}%
\caption{The metric $g_{bulb}(p,\lambda, r)$ (left) and its decomposition into head, neck and cap (right)}
\label{bulb}
\end{figure}   

It will be useful to think of the construction in Lemma \ref{bulblemma} as the following continuous map. 
Consider the subspace $\B$ of $(0,\infty)\times (0,\infty)$ consisting of pairs $(\lambda, r)$ which satisfy $0<r<\lambda\frac{\pi}{2}$. We define the map $\Bulb_{p}$ as follows:
\begin{equation}\label{bulbmap}
\begin{split}
\Bulb_{p}:S^{n}\times\B&\longrightarrow\Riem_{\torp(p')}^{+}(S^{n})\\
(p, (\lambda,r))&\longmapsto g_{\bulb}(p,\lambda,r).
\end{split}
\end{equation}
The reader should note that the image consists of metrics which are torpedo at $p'$, not $p$. In most cases the choice of $p$ is unimportant (unless of course we start pushing out extra torpedo caps). Thus, we will declare the ``standard case" to be the one where  $p$ is the north pole, $p'$ the south pole and write $\bulb_{p}$ simply as $\bulb$ and $g_{\bulb}(p,\lambda,r)$ simply as $g_{\bulb}(\lambda,r)$ in this case.

We denote by $\B{\rm ulb}_{p}(S^{n})$, the subspace in $\Riem_{\torp(p')}^{+}(S^{n})$ which is the image of the map $\Bulb_{p}$. Recall, we  defined an ``uncapping" map $\Unc_{p'}:\Riem_{\torp(p')}^{+}(S^{n})\rightarrow \Riem_{\cyl(0)}^{+}(D^{n})$, in \ref{uncap}, which removes the torpedo cap about $p'$ and pulls the remaining metric back to the standard disk $D^{n}$ in the usual way (sending $p$ to the origin $0$). We thus denote by $\NH_{\Riem}^{+}$, the image of the composition $\Unc_{p'}\circ \Bulb_{p}$, which uncaps the bulb around $p'$, leaving only the neck and head, and pulls the metric back to the standard disk $D^{n}$. The corresponding space of all head metrics, obtained by restricting each $g_{\bulb}(p,\lambda,r)$ to the ball $B_{\lambda}(p,\lambda\pi-r)$ and pulling back to $D^{n}$ in the usual way, is denoted $\H_{\Riem}^{+}$.
Thus, an element of $\NH_{\Riem}^{+}$ is the union of the head and neck of the bulb metric on $D^{n}$ and an element of $\H_{\Riem}^{+}$ is simply a head metric on $D^{n}$.

The reader should consider these space to be the analogues, with respect to bulbs, of the space of infinitesimal torpedo metrics $\T_{\Riem}^{+}$. This allows us define two important subspaces of the space of psc-metrics on $S^{n}$, in a similar vein to the definition of the space $\Riem_{\torp(p)}^{+}(S^{n})$ in \ref{spacetorp}.  Letting $D_{p}$ denote a normal coordinate ball around $p$ and  $\phi_{p}:D^{n}\rightarrow D_p$, $\phi_{p}':D^{n}\rightarrow D_{p}'$ the corresponding diffeomorphisms, as defined immediately prior to the definition in \ref{spacetorp}, we define the spaces of {\em psc-metrics on $S^{n}$ with bulbs at $p$} and {\em psc-metrics with heads at $p$}, $\Riem_{\bulb(p)}^{+}(S^{n})$ and $\Riem_{\head(p)}^{+}(S^{n})$, as follows:
\begin{equation}\label{genbulb}
\begin{split}
\Riem^{+}_{{\rm{bulb}}({p})}(S^{n})&=\{g\in \Riem^{+}(S^{n}):{\phi_{p}^{*}}(g|_{D_{p}})\in\NH_{\Riem}^{+}\},\\
\Riem^{+}_{{\rm{head}}({p})}(S^{n})&=\{g\in \Riem^{+}(S^{n}):{\phi_{p}^{*}}(g|_{D_{p}})\in\H_{\Riem}^{+}\}.
\end{split}
\end{equation}
Moreover, for a finite collection of points on $S^{n}$, ${\bf p}=\{p_0, p_1, \cdots, p_k\}$, with corresponding coordinate diffeomorphisms $\phi_{i}:D^{n}\rightarrow D_{p_i}$ , we obtain the more general spaces of psc-metrics with bulbs or heads around ${\bf p}$:
\begin{equation}\label{moregenbulb}
\begin{split}
\Riem^{+}_{{\rm{bulb}}({\bf p})}(S^{n})&=\bigcap_{i=0}^{k}\Riem^{+}_{{\rm{bulb}}({p_i})}(S^{n}),\\
\Riem^{+}_{{\rm{head}}({\bf p})}(S^{n})&=\bigcap_{i=0}^{k}\Riem^{+}_{{\rm{head}}({p_i})}(S^{n}).
\end{split}
\end{equation}

Ultimately, we will be more interested in the space $\Riem^{+}_{{\rm{head}}({\bf p})}(S^{n})$. However, both spaces will have their uses for us. As much of what we are going to say applies equally to both spaces, we let $\Riem^{+}_{{\rm{b/h}}({\bf p})}(S^{n})$ denote either of these spaces. 
Notice that each element $g\in\Riem^{+}_{{\rm{b/h}}({\bf p})}(S^{n})$ restricts near $p_i$ to a metric which is precisely the head of a $(\lambda_i,r_i)$-bulb metric for some $\lambda_i>0$ and $r_i\in(0,\lambda_i\frac{\pi}{2}]$. In particular, this means that if we cut off the head on such a metric, the resulting metric is, infinitesimally at least, sphere-like at the boundary. Recall that the head of the bulb at such a point $p_i$ is the part of the metric defined on the region $B_{g}(p_i, r_i)$ ($=B_{\lambda}(p_i, r_i)$, as $g$ and $\lambda^{2} ds_{n}^{2}$ agree on this region.) We now define, for any $\rho> 0$, the map $\Cut_{p_i, \rho}$ as follows:
\begin{equation}\label{cutmap}
\begin{split}
\Cut_{p_i, \rho}:\Riem^{+}_{{\rm{b/h}}({\bf p})}(S^{n})&\longrightarrow\Riem_{{\lens}(0)}^{+}(D^{n})\\
g&\longmapsto \begin{cases}
g|_{S^{n}\setminus B_{\lambda_i(g)}(p_i,\rho)}, & \text{if }0<\rho\leq\lambda_i(g)\pi-r_i(g),\\
g|_{S^{n}\setminus B_{\lambda_i(g)}(p_i,\lambda_i(g)\pi-r_i(g))}, & \text{if }\rho>\lambda_i(g)\pi-r_i(g),
\end{cases}
\end{split}
\end{equation}
where $\lambda_i(g)$ and $r_i(g)$ are the head radius and head angle of $g$ around $p_i$. In the case when $\rho=\lambda_i(g)\pi-r_i(g)$, we simply remove the actual $i^{th}$ head from $g$. To aid the reader we depict the resulting metric $\Cut_{p_i, \lambda_i(g)\pi-r_i(g)}(g)$, where $g\in \Riem^{+}_{{\rm{bulb}}({\bf p})}(S^{n})$ in Fig. \ref{metricwithbulbs} below. By allowing $\rho$ to vary, we give ourselves the option of cutting through the head along various geodesic spheres about $p_i$. For example, choosing $\rho=\lambda_{i}(g)\frac{\pi}{2}$ would cut the round hemisphere of radius $\lambda_i(g)$ from the head at $p_i$. This will be important later on.
\begin{figure}[!htbp]
\vspace{0.5cm}
\hspace{-1cm}
\begin{picture}(0,0)
\includegraphics{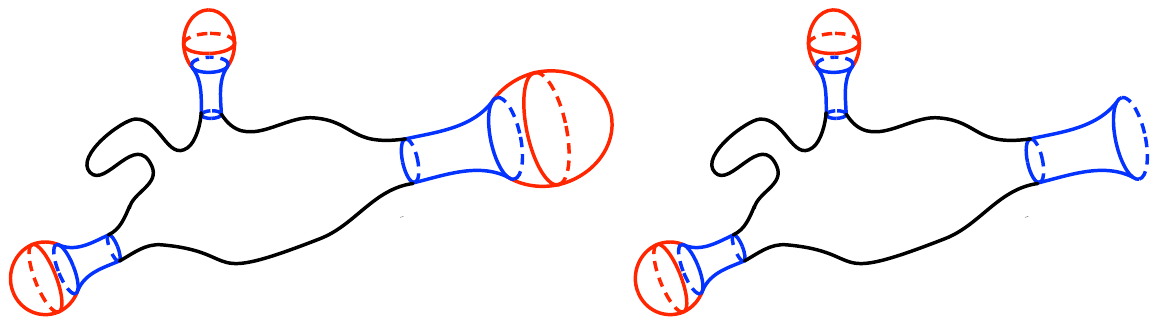}%
\end{picture}
\setlength{\unitlength}{3947sp}%
\begingroup\makeatletter\ifx\SetFigFont\undefined%
\gdef\SetFigFont#1#2#3#4#5{%
  \reset@font\fontsize{#1}{#2pt}%
  \fontfamily{#3}\fontseries{#4}\fontshape{#5}%
  \selectfont}%
\fi\endgroup%
\begin{picture}(5079,1559)(1902,-7227)
\put(4770,-5850){\makebox(0,0)[lb]{\smash{{\SetFigFont{10}{8}{\rmdefault}{\mddefault}{\updefault}{\color[rgb]{0,0,0}$p_i$}%
}}}}
\end{picture}%
\caption{A metric $g\in \Riem^{+}_{{\rm{bulb}}({\bf p})}(S^{n})$ (left) and the metric $\Cut_{p_i,\lambda_i(g)\pi-r_i(g)}(g)$ on $D^{n}$ obtained by cutting the $i^{th}$ head from $g$ (right)}
\label{metricwithbulbs}
\end{figure}   

Before discsussing the spaces $\Riem^{+}_{{\rm{b/h}}({\bf p})}(S^{n})$ any further, there is another construction we must attend to. We return once more to the general case of a smooth Riemannian manifold $(M, g)$, a point $x\in M$ and closed geodesic ball $B_g(x,\epsilon)$. Let $g'$ denote the metric obtained by pushing out an infinitesimal torpedo metric of radius $\delta$, for some $\delta>0$, on a disk around $x$ inside $B_g(x,\epsilon)$. Using Lemma \ref{GLfamily}, we will perform an adjustment to the metric $g'$ near $x$. Essentially, we construct a psc-isotopy, starting at $g'$, which is trivial outside of $B_g(x,\epsilon)$ and which results in a metric which is the result of a Gromov-Lawson connected sum between $(M, g)$ and the standard sphere. Importantly, it is the metric obtained by removing the torpedo cap of radius $\delta$ about $x$, removing a corresponding torpedo cap of radius $\delta$ from a bulb metric and gluing them together in the obvious way. This is made more precise in Lemma \ref{bulblemma2} below. In Fig. \ref{pushcapbulb} we depict the restrictions of these metrics to $B_g(x,\epsilon)$. 

\begin{figure}[!htbp]
\vspace{8cm}
\hspace{-5.0cm}
\begin{picture}(0,0)
\includegraphics{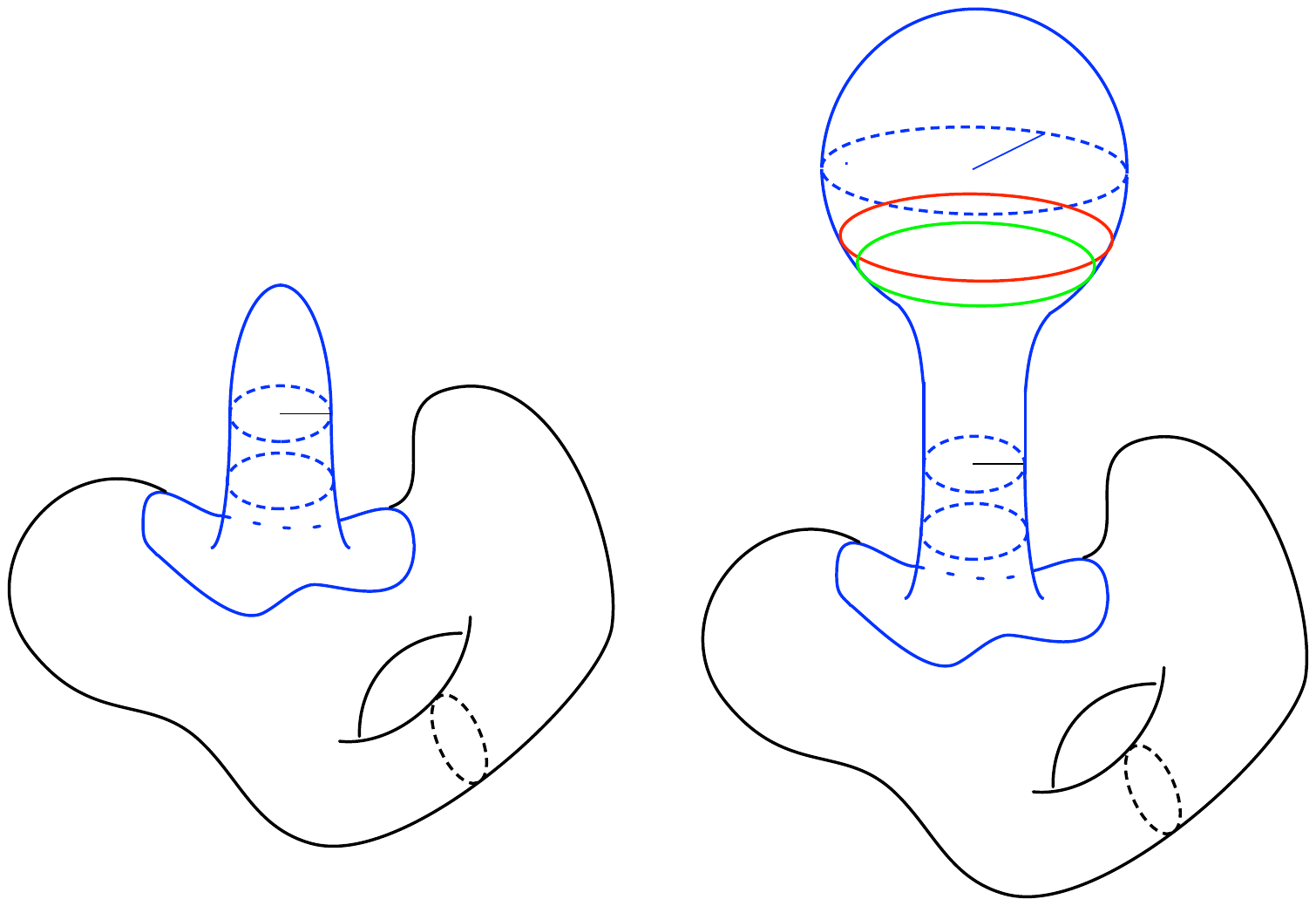}%
\end{picture}
\setlength{\unitlength}{3947sp}%
\begingroup\makeatletter\ifx\SetFigFont\undefined%
\gdef\SetFigFont#1#2#3#4#5{%
  \reset@font\fontsize{#1}{#2pt}%
  \fontfamily{#3}\fontseries{#4}\fontshape{#5}%
  \selectfont}%
\fi\endgroup%
\begin{picture}(5079,1559)(1902,-7227)
\put(2550,-5200){\makebox(0,0)[lb]{\smash{{\SetFigFont{10}{8}{\rmdefault}{\mddefault}{\updefault}{\color[rgb]{0,0,0}$B_g(x,\epsilon)$}%
}}}}
\put(2300,-5900){\makebox(0,0)[lb]{\smash{{\SetFigFont{10}{8}{\rmdefault}{\mddefault}{\updefault}{\color[rgb]{0,0,0}$M$}%
}}}}
\put(3870,-4220){\makebox(0,0)[lb]{\smash{{\SetFigFont{10}{8}{\rmdefault}{\mddefault}{\updefault}{\color[rgb]{0,0,0}$\delta$}%
}}}}
\put(3800,-3450){\makebox(0,0)[lb]{\smash{{\SetFigFont{10}{8}{\rmdefault}{\mddefault}{\updefault}{\color[rgb]{0,0,0}$p$}%
}}}}
\put(7650,-4500){\makebox(0,0)[lb]{\smash{{\SetFigFont{10}{8}{\rmdefault}{\mddefault}{\updefault}{\color[rgb]{0,0,0}$\delta$}%
}}}}
\put(7670,-2820){\makebox(0,0)[lb]{\smash{{\SetFigFont{10}{8}{\rmdefault}{\mddefault}{\updefault}{\color[rgb]{0,0,0}$\lambda$}%
}}}}
\put(7650,-1900){\makebox(0,0)[lb]{\smash{{\SetFigFont{10}{8}{\rmdefault}{\mddefault}{\updefault}{\color[rgb]{0,0,0}$p$}%
}}}}
\put(5700,-3320){\makebox(0,0)[lb]{\smash{{\SetFigFont{10}{8}{\rmdefault}{\mddefault}{\updefault}{\color[rgb]{0,0,0}$\p B_{\lambda}(p,\lambda\pi-r)$}%
}}}}
\put(5800,-3620){\makebox(0,0)[lb]{\smash{{\SetFigFont{10}{8}{\rmdefault}{\mddefault}{\updefault}{\color[rgb]{0,0,0}$\p B_{\lambda}(p,\lambda\pi-r^{*})$}%
}}}}

\end{picture}%
\caption{Pushing out a bulb (right) around $x$ in $B_g(x,\epsilon)$}
\label{pushcapbulb}
\end{figure}   
\begin{Lemma}\label{bulblemma2}
Let $M^{n}$ be a smooth compact manifold with dimension $n\geq 3$ and $g$ a psc-metric on $M$. Suppose $x\in M$ and $B_g(x,\epsilon)$ is a geodesic ball around $p$ for some $\epsilon>0$. For any $\lambda>0$ and any $r\in (0,\lambda\frac{\pi}{2}]$, there is a psc-isotopy $g_{\bulb}(t), t\in I$ so that the following conditions hold:
\begin{enumerate}
\item{} The psc-isotopy $g_{\bulb}(t), t\in I$ varies continuously with respect to $g, p, \epsilon, r$ and $\lambda$.
\item{} Outside of $B_g(p,\epsilon)$, $g_{\bulb}(t)=g$ for all $t\in I$.
\item{} There are continuous parameters ${\epsilon^{*}}$ satisfying $0<\epsilon^{*}<\epsilon$ and ${r^{*}}$ satisfying $0<r^{*}\leq r$, which depend continuously on $g, p, \epsilon, r$ and $\lambda$, so that the restriction metric $g_{\bulb}(1)|_{B_g(p,\epsilon_{0})}$ is the bulb metric $g_{\bulb}(\lambda, r^{*})$.
\end{enumerate}
\end{Lemma}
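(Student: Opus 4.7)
The plan is to realise $g_{\bulb}(1)$ as a Gromov-Lawson connected sum of $(M,g)$ with the standard round sphere $(S^n,\lambda^2 ds_n^2)$, and then construct the psc-isotopy by running the connected sum modification in reverse, contracting it back to the original metric as in the proof of Lemma \ref{GLfamily}. Concretely, first apply Lemma \ref{GLfamily} with $K$ a single point and $p_s \equiv p$ to obtain a constant $\delta_0 > 0$, a neighbourhood $U \subset B_g(p,\epsilon)$, and a psc-isotopy $g_t$ on $[0,\tfrac{1}{2}]$ with $g_0 = g$, supported inside $U$, ending at a metric $g_{1/2}$ which takes the form of an infinitesimal torpedo of radius $\delta \leq \delta_0$ on a small disk $D_0$ around $p$.

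Next, by part (2) of Lemma \ref{bulblemma} the cap radius $\delta(\lambda, r')$ of $g_{\bulb}(\lambda, r')$ satisfies $0 < \delta(\lambda, r') \leq r'$ and depends continuously on $r'$. In particular $\delta(\lambda, r') \to 0$ as $r' \to 0^+$, so we may choose $r^* \in (0, r]$ (depending continuously on the input data) with $\delta(\lambda, r^*) = \delta$. Now remove the torpedo caps of radius $\delta$ from both $g_{1/2}$ (at $p$) and from $g_{\bulb}(\lambda, r^*)$ (at its south pole $p'$); both residues lie in $\Riem^+_{\cyl(0)}(D^n)$, so they can be glued along their matching infinitesimally cylindrical boundaries to produce a psc-metric on the connected sum $M \# S^n \cong M$, which we define to be $g_{\bulb}(1)$. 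For $\epsilon^* < \epsilon$ chosen so that the attached head and neck fit inside $B_g(p, \epsilon^*)$, the restriction $g_{\bulb}(1)|_{B_g(p,\epsilon^*)}$ is isometric to the whole bulb $g_{\bulb}(\lambda, r^*)$, after removing a tiny neighbourhood of the bulb's top which is a round sphere region and so does not affect the identification.

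To complete the psc-isotopy on $[\tfrac{1}{2}, 1]$ we deform $g_{\bulb}(1)$ back to $g_{1/2}$ by collapsing the head and neck of the attached bulb down into the torpedo cap. This can be done by continuously shrinking the head sphere radius through a one-parameter family of bulb-style push-out profiles, using the same machinery and curve-contraction arguments as in the proof of Lemma \ref{GLfamily}, and therefore maintaining positive scalar curvature throughout. Continuity in $(g, p, \epsilon, r, \lambda)$ is automatic from the compact-family statement of Lemma \ref{GLfamily} together with continuity of $\delta(\lambda, r')$ from Lemma \ref{bulblemma}. The main obstacle is verifying that the curve-contraction step preserves positive scalar curvature through the transition from a bulb profile to a pure torpedo profile; this is precisely the calculation at the heart of Gromov and Lawson's original construction, and in this form it is a special case of the more general Theorem 2.13 in \cite{Walsh1}.
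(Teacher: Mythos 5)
Your proof is correct and follows essentially the same route as the paper's own (very terse) proof: both invoke Lemma \ref{GLfamily} to push out a standard infinitesimal torpedo and then convert the standard torpedo structure into the desired bulb. Where the paper simply asserts that ``these torpedo caps have a nice standard structure that is easily manipulated to obtain the desired bulb structure,'' you spell out the manipulation---matching the cap radius $\delta$ to $\delta(\lambda,r^{*})$ via Lemma \ref{bulblemma}, gluing the uncapped bulb onto the uncapped torpedo along their infinitesimally cylindrical boundaries, and then contracting this profile back to the torpedo using the same curve-contraction machinery, all of which is covered by Theorem 2.13 of \cite{Walsh1}.
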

\begin{proof}
Most of the work here lies in continuously pushing out torpedo caps. This is done in original Gromov-Lawson construction as described in Theorem 2.13 of \cite{Walsh1}. As we already discussed, these torpedo caps have a nice standard structure that is easily manipulated to obtain the desired bulb structure.
\end{proof}
\begin{Remark}
The reader may wonder why we require the parameter $r^{*}$ with the property that $0<r^{*}\leq r$ and why we only end up by pushing out a $(\lambda,r^{*})$-bulb metric, in the lemma above. This is a consequence of the Gromov-Lawson construction. The radius of the connecting tube, $\delta$, may have to be very small. Thus, we cannot guarantee that it connects up correctly with the head and neck of the $(\lambda,r)$-bulb. However, for any $\lambda$ there is always a small enough head angle $r^{*}$ which works. 
\end{Remark}

We are now ready to make some observations about the spaces $\Riem_{\bulb(\bf p)}(S^{n})$ and $\Riem_{\head(\bf p)}(S^{n})$ defined above. Firstly, the obvious analogues of lemmas \ref{caponb} and \ref{caprot} hold here also.
\begin{Proposition}
Lemma \ref{caponb} and Lemma \ref{caprot} hold if we replace the space $\Riem_{\torp({\bf p})}(S^{n})$ with the spaces $\Riem_{\bulb(\bf p)}(S^{n})$ or $\Riem_{\head(\bf p)}(S^{n})$ for relevant collections of points $\bf p$.
\end{Proposition}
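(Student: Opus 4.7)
The plan is to follow the strategy of Lemmas \ref{caponb} and \ref{caprot} almost verbatim, exploiting the fact that bulb and head metrics retain precisely the same rotational symmetry that was used in the torpedo case. The argument reduces to checking that the three structural ingredients of the torpedo proof carry over: $O(n)$-symmetry of the cap region, the description of the cap via a diffeomorphism $\phi_{p}$ built from the exponential map of the reference metric $ds_{n}^{2}$, and the fact that the rotation $\ro_{(p,q)}$ is an isometry of $(S^{n},ds_{n}^{2})$.

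For the analogue of Lemma \ref{caponb}, I would begin by noting that Lemma \ref{bulblemma}(1) asserts that every bulb metric $g_{\bulb}(p,\lambda,r)$ is rotationally symmetric about the line through $p$ and $p'$. Consequently, every element of $\NH_{\Riem}^{+}$ or $\H_{\Riem}^{+}$, being obtained by pulling back to $D^{n}$ via a normal coordinate identification that sends rotations about the origin of $D^{n}$ to rotations of $S^{n}$ fixing $p$ and $p'$, is invariant under the standard $O(n)$-action on $D^{n}$. A change of orthonormal basis at $T_{p_{i}}S^{n}$ replaces $\phi_{p_{i}}$ by $\phi_{p_{i}}\circ A$ for some $A\in O(n)$, and since $(\phi_{p_{i}}\circ A)^{*}(g|_{D_{p_{i}}})=A^{*}\phi_{p_{i}}^{*}(g|_{D_{p_{i}}})$, the defining condition for membership in $\Riem^{+}_{\bulb(p_{i})}(S^{n})$ (respectively $\Riem^{+}_{\head(p_{i})}(S^{n})$) is unchanged. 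Intersecting over $i$ gives the statement for a general collection $\mathbf{p}$.

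For the analogue of Lemma \ref{caprot}, I would again take $\ro_{(p,q)}$ to be the rotation of the standard round sphere along the great circle through $p$ and $q$, which is an isometry of $(S^{n},ds_{n}^{2})$ and hence carries geodesic balls, heads, necks and caps of any specified radii to their counterparts at the rotated point. After coordinating the choice of orthonormal bases at $p$ and $q$ (which by the first part is harmless), one arranges that $\phi_{p}=\ro_{(p,q)}\circ\phi_{q}$, so that for any $g\in\Riem^{+}_{\bulb(q)}(S^{n})$ we have $\phi_{p}^{*}((\ro_{(p,q)}^{*}g)|_{D_{p}})=\phi_{q}^{*}(g|_{D_{q}})\in\NH_{\Riem}^{+}$. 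Thus $g\mapsto\ro_{(p,q)}^{*}g$ sends $\Riem^{+}_{\bulb(q)}(S^{n})$ into $\Riem^{+}_{\bulb(p)}(S^{n})$ and, with continuous inverse $h\mapsto\ro_{(q,p)}^{*}h$, is a homeomorphism. Replacing $\NH_{\Riem}^{+}$ by $\H_{\Riem}^{+}$ throughout handles the head case.

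I do not anticipate a serious obstacle. The one point that merits explicit verification is that the rotational symmetry guaranteed by Lemma \ref{bulblemma}, which is stated as symmetry about the $p$--$p'$ axis, corresponds under $\phi_{p}$ to the full $O(n)$-symmetry on $D^{n}$ (rather than only an $O(n-1)$-symmetry). This is immediate from the construction of $\phi_{p}$ from normal coordinates: rotations of $D^{n}$ about the origin pull back to rotations of $S^{n}$ fixing both $p$ and $p'$, and bulb and head metrics are invariant under all such rotations. Once this is recorded, both statements follow by the same formal manipulations as in the torpedo case, with no new geometric input needed.
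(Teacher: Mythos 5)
Your argument is correct and supplies exactly the verification that the paper leaves implicit (the Proposition is stated there without a proof): as you note, Lemma \ref{bulblemma}(1) gives the rotational symmetry of the bulb/head region, which translates under $\phi_p$ into invariance of $\NH_{\Riem}^{+}$ and $\H_{\Riem}^{+}$ under the $O(n)$-action on $D^{n}$, and the proofs of Lemmas \ref{caponb} and \ref{caprot} then carry over word for word. One small slip: with the paper's convention $\ro_{(p,q)}(p)=q$, coordinating orthonormal bases gives $\phi_q=\ro_{(p,q)}\circ\phi_p$ (equivalently $\phi_p=\ro_{(p,q)}^{-1}\circ\phi_q$), not $\phi_p=\ro_{(p,q)}\circ\phi_q$; as you wrote it, the chain of pullbacks yields $\phi_q^{*}\bigl((\ro_{(p,q)}^{2})^{*}g\bigr)$ rather than $\phi_q^{*}(g)$, and would require $\ro_{(p,q)}$ to be an involution. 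Reversing that composition fixes the computation and leaves the rest of your argument intact.
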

\noindent More importantly, we have the following lemma on the homotopy type of  $\Riem_{\bulb(\bf p)}(S^{n})$.
\begin{Lemma}
For $n\geq 3$, the spaces $\Riem^{+}_{{\rm{torp}}({\bf p})}(S^{n})$, $\Riem^{+}_{{\rm{bulb}}(\bf p)}(S^{n})$ and $\Riem_{\head(\bf p)}(S^{n})$ are homotopy equivalent.
\end{Lemma}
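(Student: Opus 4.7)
The plan is to show that each of the three spaces is weakly homotopy equivalent to $\Riem^{+}(S^{n})$; by Palais' theorem each is dominated by a CW complex, so Whitehead then upgrades this to the homotopy equivalences claimed, and the lemma follows by transitivity. It therefore suffices to prove that each of the relative homotopy groups
\begin{equation*}
\pi_{k}\!\bigl(\Riem^{+}(S^{n}),\,\Riem_{\star({\bf p})}^{+}(S^{n})\bigr),\qquad \star\in\{\torp,\ \bulb,\ \head\},
\end{equation*}
vanishes for every $k\geq 0$. The case $\star=\torp$ is exactly Lemma \ref{homequivspaces}, proved in \cite{Walsh3} by deforming any compact family of psc-metrics $\{g_{t}\}_{t\in D^{k}}$, already in $\Riem_{\torp({\bf p})}^{+}(S^{n})$ on $\partial D^{k}$, into a family lying in $\Riem_{\torp({\bf p})}^{+}(S^{n})$ throughout $D^{k}$, the tool being the family Gromov--Lawson construction of Lemma \ref{GLfamily}.

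I would attack the case $\star=\bulb$ by the same template, using Lemma \ref{bulblemma2} in place of Lemma \ref{GLfamily}. Given a compact family $\{g_{t}\}_{t\in D^{k}}$ of psc-metrics on $S^{n}$, already in $\Riem_{\bulb({\bf p})}^{+}(S^{n})$ for $t\in\partial D^{k}$, I would fix pairwise disjoint geodesic balls around the points $p_{i}\in{\bf p}$ and apply the family bulb push-out of Lemma \ref{bulblemma2} simultaneously at each $p_{i}$, cutting off its time parameter against a collar of $\partial D^{k}$ so that the existing bulb structures on the boundary family are left undisturbed. The result is a continuous family homotopic to the original rel $\partial D^{k}$ whose every element lies in $\Riem_{\bulb({\bf p})}^{+}(S^{n})$, which trivialises the prescribed element of the relative homotopy group.

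For the case $\star=\head$ I would cascade through the bulb case by exhibiting a deformation retract of $\Riem_{\bulb({\bf p})}^{+}(S^{n})$ onto $\Riem_{\head({\bf p})}^{+}(S^{n})$. Near each $p_{i}$, an element of $\Riem_{\bulb({\bf p})}^{+}(S^{n})$ pulls back on $D_{p_{i}}$ to a head-plus-neck metric with cylindrical boundary, whereas an element of $\Riem_{\head({\bf p})}^{+}(S^{n})$ pulls back to a pure lens (head) metric with spherical boundary; the two differ only in how the round-sphere region transitions out to $\partial D_{p_{i}}$. A one-parameter family of warping profiles interpolating between the cylindrical form $dt^{2}+\delta^{2}ds_{n-1}^{2}$ and the lens form $r^{2}dt^{2}+\lambda^{2}\sin^{2}(rt/\lambda)ds_{n-1}^{2}$, pulled back to each $D_{p_{i}}$ and combined with a matching reshaping of the metric on the complement of $D_{p_{i}}$, produces the required retract, which in the limit sends the neck-plus-head to its underlying head.

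The hard part will be this last interpolation. It has to change the infinitesimal boundary behaviour at $\partial D_{p_{i}}$ from cylindrical to spherical while keeping the scalar curvature positive at every intermediate stage and maintaining smooth continuation into the rest of $S^{n}$. Because $n\geq 3$, the rotational symmetry around $p_{i}$ reduces the positivity requirement to an ODE condition on the warping function, which can be verified by a controlled bending argument in the spirit of Chapter 1 of \cite{Walsh1}; continuity in $t\in D^{k}$ is automatic from the locality of the construction, as in the torp case.
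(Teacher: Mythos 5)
Your strategy is genuinely different from the paper's. The paper proves this lemma directly, without passing through $\Riem^{+}(S^{n})$ at all: it uses Lemma \ref{bulblemma2} to construct an explicit map $\Riem^{+}_{\torp({\bf p})}(S^{n})\to\Riem^{+}_{\bulb({\bf p})}(S^{n})$ (pushing out a bulb inside each torpedo cap), builds the reverse map by pushing out a cap on each bulb head, and shows both compositions are homotopic to the identity using the psc-isotopies that Lemma \ref{bulblemma2} already supplies. It then runs an exactly parallel argument to relate $\Riem^{+}_{\bulb({\bf p})}(S^{n})$ and $\Riem^{+}_{\head({\bf p})}(S^{n})$ by \emph{adding} necks to heads. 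You instead route everything through $\Riem^{+}(S^{n})$ via vanishing of relative homotopy groups and Whitehead, and you handle the bulb--head step by a direct interpolation of warping profiles. Your route is coherent and, for the torp and bulb cases, reasonable: the collar-cutoff trick to leave the boundary family undisturbed is the standard device and is implicitly what \cite{Walsh3} does in Lemma \ref{homequivspaces}.

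Two caveats on the bulb--head step. First, you describe a ``deformation retract of $\Riem_{\bulb({\bf p})}^{+}(S^{n})$ onto $\Riem_{\head({\bf p})}^{+}(S^{n})$,'' but the head space is not a subspace of the bulb space (one requires a head-plus-neck profile on $D_{p_i}$, the other a pure lens profile), so what you actually want is a homotopy equivalence, not a retract. Second, and more substantively, you correctly flag the interpolation of warping profiles as the hard part but leave it at the level of a ``controlled bending argument.'' The paper sidesteps this entirely: adding a neck to a head is precisely the kind of deformation that Lemma \ref{bulblemma2} has already been shown to perform while preserving psc, so no new ODE estimate is needed. Your approach would require re-deriving that estimate; the paper's gets it for free by reusing the same lemma it uses for the torp--bulb step. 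This is the main thing that makes the paper's proof shorter, even though both arguments are valid in principle.
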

\begin{proof}
The construction in Lemma \ref{bulblemma2} gives rise to an obvious map from $\Riem^{+}_{{\torp}({\bf p})}(S^{n})$ to $\Riem^{+}_{{\rm{bulb}}({\bf p})}(S^{n})$. Conversely one can easily construct a map in the opposite direction, which involves pushing out a cap on the bulb head of each element in $\Riem^{+}_{{\rm{bulb}}({\bf p})}(S^{n})$. Showing that compositions of these maps is homotopy equivalent to the appropriate identity map is then straightforward given the psc-isotopies constructed in  Lemma \ref{bulblemma2}.  Moreover, a similar argument using the construction done in Lemma \ref{bulblemma2} may be used to add necks to the heads of psc-metrics in $\Riem_{\head(\bf p)}(S^{n})$ in order to demonstrate a homotopy equivalence between $\Riem^{+}_{{\rm{bulb}}({\bf p})}(S^{n})$ and $\Riem^{+}_{{\rm{head}}({\bf p})}(S^{n})$. 
\end{proof}
\begin{Corollary}
For $n\geq 3$, the spaces $\Riem_{\bulb(\bf p)}(S^{n})$ and $\Riem_{\head(\bf p)}(S^{n})$ are homotopy equivalent to $\Riem^{+}(S^{n})$. 
\end{Corollary}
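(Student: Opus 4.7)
The plan is to obtain this corollary by chaining together two homotopy equivalences already established in the paper. The preceding lemma asserts that $\Riem^{+}_{\torp({\bf p})}(S^{n})$, $\Riem^{+}_{\bulb({\bf p})}(S^{n})$, and $\Riem^{+}_{\head({\bf p})}(S^{n})$ are all homotopy equivalent to one another (for any valid finite collection of points ${\bf p}$ on $S^{n}$, provided $n \geq 3$). Separately, Lemma \ref{homequivspaces} establishes that $\Riem^{+}_{\torp({\bf p})}(S^{n})$ is homotopy equivalent to $\Riem^{+}(S^{n})$ under the same hypotheses.

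The idea, then, is simply to invoke the transitivity of the relation of homotopy equivalence. More concretely, fix homotopy equivalences
\begin{equation*}
F: \Riem^{+}_{\torp({\bf p})}(S^{n}) \longrightarrow \Riem^{+}(S^{n}),
\qquad
G: \Riem^{+}_{\bulb({\bf p})}(S^{n}) \longrightarrow \Riem^{+}_{\torp({\bf p})}(S^{n}),
\end{equation*}
coming from Lemma \ref{homequivspaces} and the preceding lemma respectively, together with a homotopy equivalence $H: \Riem^{+}_{\head({\bf p})}(S^{n}) \longrightarrow \Riem^{+}_{\bulb({\bf p})}(S^{n})$ from the preceding lemma. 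The compositions $F \circ G$ and $F \circ G \circ H$ then give the required homotopy equivalences from $\Riem^{+}_{\bulb({\bf p})}(S^{n})$ and $\Riem^{+}_{\head({\bf p})}(S^{n})$, respectively, to $\Riem^{+}(S^{n})$.

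There is no real obstacle to overcome; all the substantive work has already been done in establishing the preceding lemma (where torpedo caps are interchanged with bulbs via the construction of Lemma \ref{bulblemma2}) and in Lemma \ref{homequivspaces} (where the Gromov–Lawson family construction from Lemma \ref{GLfamily} is used to trivialise the relative homotopy groups $\pi_{k}(\Riem^{+}(S^{n}), \Riem^{+}_{\torp({\bf p})}(S^{n}))$, and Palais's domination of these spaces by CW complexes plus Whitehead's theorem yields the homotopy equivalence). Consequently, the proof reduces to a single sentence noting that homotopy equivalence is an equivalence relation and applying it to the two previously established links in the chain.
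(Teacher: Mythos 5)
Your proposal is correct and matches the paper's argument exactly: the paper's proof is the one-line observation that the result is immediate from Lemma~\ref{homequivspaces} together with the immediately preceding lemma identifying the three subspaces up to homotopy. You have simply spelled out the transitivity chain that the paper leaves implicit.
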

\begin{proof}
This is immediate by Lemma \ref{homequivspaces}.
\end{proof}
\noindent In fact, we can say a little more. To simplify notation, we will work in the case when ${\bf p}$ is just a single point $p_0$, although there is an obvious generalisation to the case when ${\bf p}$ has many points. As in the case of psc-metrics with torpedo caps, we may also define, for each pair $\lambda_0, r_0>0$ with $r_0\in(0,\lambda_0\frac{\pi}{2}]$, the following subspaces of psc-metrics with a fixed bulb or fixed head:  
\begin{equation*}\label{fixbulb}
\begin{split}
&\Riem^{+}_{{\rm{bulb}}({p_0}, \lambda_0, r_0)}(S^{n}):=\\
&\{g\in \Riem^{+}_{{\rm{bulb}}(p_0)}(S^{n}):{\phi_{p_0}^{*}}(g|_{D_{p_0}})=\Unc_{p_0'}\circ \Bulb_{p_0}(\lambda_0, r_0)\},
\end{split}
\end{equation*}
and 
\begin{equation*}\label{fixhead}
\begin{split}
\Riem^{+}_{{\rm{head}}({p_0}, \lambda_0, r_0)}(S^{n}):=\{g\in \Riem^{+}_{{\rm{head}}(p_0)}(S^{n}):{\phi_{p_0}^{*}}(g|_{D_{p_0}})=g_{\lens}(\lambda_0, r_0)\}.
\end{split}
\end{equation*}  
There are also intermediary spaces where the head radius is fixed at $\lambda_0$ but the head angle $r$ is allowed to vary in the interval $(0,\lambda_0\frac{\pi}{2}]$. We denote these spaces $ \Riem^{+}_{{\rm{b/h}}(p_0,\lambda_0)}(S^{n})$, simply dropping the head radius coordinate. To clarify, we have the following sequence of inclusions:
\begin{equation*}\label{fixheadvary}
\Riem^{+}_{{\rm{b/h}}({p_0}, \lambda_0, r_0)}(S^{n})\subset \Riem^{+}_{{\rm{b/h}}({p_0}, \lambda_0)}(S^{n})\subset \Riem^{+}_{{\rm{b/h}}({p_0})}(S^{n}).
\end{equation*}
Given a metric a metric $g\in \Riem^{+}_{{\rm{b/h}}(p_0)}(S^{n})$, there is a canonical way of moving it, in the space $\Riem^{+}_{{\rm{b/h}}(p_0)}(S^{n})$ to a metric which lies in $\Riem^{+}_{{\rm{b/h}}({p_0}, \lambda_0, r_0)}(S^{n})$.
This method takes the form of a map, which we denote $\Mov^{\rm b/h}=\Mov_{p_0, \lambda_0, r_0}^{\rm b/h}$, and define as follows:
\begin{equation}\label{movmap}
\Mov^{\rm b/h}:\Riem^{+}_{{\rm{b/h}}(p_0)}(S^{n})\longrightarrow\Riem^{+}_{{\rm{bulb}}({p_0}, \lambda_0, r_0)}(S^{n}),
\end{equation}
where for each input metric $g$ whose bulb about $p_0$ takes the form $g_{\bulb}(p_0,\lambda_1, r_1)$, the output $\Mov^{\rm b/h}(g)$ is obtained by construction described below. We denote by $\delta_1$, the neck radius of this metric.
\begin{enumerate}
\item{} Replace the entire metric $g$ with the metric $(\frac{\lambda_0}{\lambda_1})^{2}g$ to obtain a new metric, whose bulb head has radius $\lambda_0.$
\item{} The next part is more delicate as the cases of $\Mov^{\bulb}$ and $\Mov^{\head}$ are slightly different. We begin with the map $\Mov^{\bulb}$. By Lemma \ref{bulblemma}, we may replace the bulb component of the metric newly scaled metric $(\frac{\lambda_0}{\lambda_1})^{2}g$ with the unique one with head radius $\lambda_0$ and head angle $r_0$. This metric has a neck whose radius we denote $\delta_0>0$. The problem is that this may not agree with the neck radius, $\delta_1$, of the connecting tube which connects the bulb with the rest of the metric $g$. Thus, to compensate, we replace the restriction metric $g|_{S^{n}\setminus D_{p}}$ with $(\frac{\delta_0}{\delta_1})^{2}g|_{S^{n}\setminus D_{p}}$. This latter restriction metric attaches smoothly to the head and neck of the new bulb component, resulting in an element of $\Riem^{+}_{{\rm{bulb}}({p_0}, \lambda_0, r_0)}$. In the case of $\Mov^{\head}$ we first use the techniques of Lemma \ref{bulblemma2} to push out a neck on the metric $(\frac{\lambda_0}{\lambda_1})^{2}g$ before proceeding as above. We emphasise that the pushing out of such a neck can be done in a canonical way inside the standard head, relying only input data arising from the head angle and radius and so does not require any non-standard metric data from $g$.
\end{enumerate}
This gives rise to the following lemma, an analogue of Lemma \ref{spheredefret}.
\begin{Lemma}\label{bulbdefret}
For $n\geq 3, \lambda_0>0$ and $r_0\in(0,\lambda\frac{\pi}{2}]$, there is a deformation retract from the space $\Riem^{+}_{{\rm{b/h}}(p_0)}(S^{n})$ onto its corresponding subspace $\Riem^{+}_{{\rm{b/h}}(p_0, \lambda_0)}(S^{n})$ and then a further deformation retract to the corresponding subspace $\Riem^{+}_{{\rm{b/h}}({p_0}, \lambda_0, r_0)}(S^{n})$.
\end{Lemma}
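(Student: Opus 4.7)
The strategy is to produce each retraction as an explicit homotopy from the identity to a retraction modeled on $\Mov^{\rm b/h}=\Mov^{\rm b/h}_{p_0,\lambda_0,r_0}$ from \ref{movmap}, in the spirit of the proof of Lemma \ref{spheredefret}. In each case the homotopy must fix every metric already lying in the target subspace and must be continuous in $g$.

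For the first retract, the head radius $\lambda_1(g)$ of $g\in\Riem^{+}_{{\rm b/h}(p_0)}(S^{n})$ is a continuous scalar functional of $g$, read off from the round component of $\phi_{p_0}^{*}(g|_{D_{p_0}})$. Define
\begin{equation*}
R^{(1)}_{s}(g)\;=\;\frac{\lambda_{0}^{2}}{\bigl((1-s)\lambda_{1}(g)+s\lambda_{0}\bigr)^{2}}\,g,\qquad s\in[0,1].
\end{equation*}
Then $R^{(1)}_{1}=\mathrm{id}$, while $R^{(1)}_{0}(g)$ is a global rescaling of $g$ whose head radius is $\lambda_{0}$, and hence lies in $\Riem^{+}_{{\rm b/h}(p_0,\lambda_{0})}(S^{n})$. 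When $\lambda_{1}(g)=\lambda_{0}$, the denominator is constant in $s$, so $R^{(1)}_{s}(g)=g$ throughout. Global rescaling preserves positivity of scalar curvature, and in both the bulb and head cases scaling by a factor $c$ transforms the local parameters $(\lambda_{1},r_{1})$ into $(c\lambda_{1},c r_{1})$, exactly as in Proposition \ref{welldefret}.

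For the second retract we vary the head angle. Consider first the bulb case: given $g\in\Riem^{+}_{\bulb(p_0,\lambda_{0})}(S^{n})$ with head angle $r_{1}(g)$ and neck radius $\delta_{1}(g)=\delta(\lambda_{0},r_{1}(g))$, set
\begin{equation*}
r(s)=(1-s)r_{0}+s\,r_{1}(g),\qquad \delta(s)=\delta(\lambda_{0},r(s)),\qquad s\in[0,1].
\end{equation*}
Define $R^{(2)}_{s}(g)$ by replacing the head-plus-neck of $g$ near $p_{0}$ with the corresponding portion of $g_{\bulb}(p_{0},\lambda_{0},r(s))$ and rescaling the metric on the complementary region of $S^{n}$ by the factor $(\delta(s)/\delta_{1}(g))^{2}$, so that the two pieces glue smoothly along a round $(n-1)$-sphere of radius $\delta(s)$. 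Lemma \ref{bulblemma} supplies the required continuity in $r$; at $s=1$ one recovers $g$ (since $r(1)=r_{1}(g)$ and $\delta(1)=\delta_{1}(g)$), while at $s=0$ one obtains $\Mov^{\bulb}(g)\in\Riem^{+}_{\bulb(p_0,\lambda_0,r_0)}(S^{n})$; when $r_{1}(g)=r_{0}$ the homotopy is stationary. The head case is handled by the same construction, with the role of the neck radius $\delta(s)$ taken by the radius of the boundary $(n-1)$-sphere of the head $g_{\head}(p_0,\lambda_0,r(s))$, which likewise depends continuously on $r(s)$.

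The main technical obstacle lies in the second retract, where one simultaneously interpolates the head angle, the neck radius, and the complementary scaling while maintaining positive scalar curvature and smooth gluing. This is precisely what Lemma \ref{bulblemma2} and the continuity statement of Lemma \ref{bulblemma} are designed to support: the Gromov--Lawson bulb construction depends continuously on all of its parameters, and the neck radius $\delta(\lambda_{0},r)$ may be shrunk if required. Consequently the interpolating family remains inside $\Riem^{+}_{{\rm b/h}(p_{0})}(S^{n})$ for every $s$, yielding the two deformation retracts.
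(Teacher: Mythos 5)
Your first retraction is correct and matches the paper's method: global rescaling by the quotient of head radii, exactly as in the proof of Lemma \ref{spheredefret}, with continuity and the fixed-point property verified just as you describe. Your second retraction is also correct in the \emph{bulb} case, and the idea of making the interpolation explicit via a path of head angles $r(s)$ together with Lemma \ref{bulblemma} is a reasonable way to fill in what the paper only asserts.

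However, the \emph{head} case of the second retraction has a real gap. You claim it is handled by ``the same construction, with the role of the neck radius $\delta(s)$ taken by the radius of the boundary $(n-1)$-sphere of the head.'' This does not work: for a head metric, the complementary piece $g|_{D_{p_0}'}$ lies in $\Riem^{+}_{(\lambda_0,r_1)\text{-}\lens(0)}(D^{n})$, so its boundary carries the full \emph{lens} jet, not merely an infinitesimally cylindrical one. If you rescale the complement by $c^{2}$ it becomes an element of $\Riem^{+}_{(c\lambda_0,\,cr_1)\text{-}\lens(0)}(D^{n})$; to glue smoothly to the new head $g_{\lens}(\lambda_0,\lambda_0\pi-r(s))$ you would need $c\lambda_0=\lambda_0$ \emph{and} $cr_1=r(s)$, forcing $r(s)=r_1$. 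Matching only the boundary sphere radius (which transforms as $\lambda_0\sin(r/\lambda_0)$, not linearly) does not match the $1$-jets: a direct computation of the warping derivative at the boundary shows the gluing fails to be $C^{1}$ unless the head angles coincide. This is exactly why the paper's $\Mov^{\head}$ first uses Lemma \ref{bulblemma2} to \emph{push out a neck} inside the existing head, converting the lens boundary into an infinitesimally cylindrical one, which \emph{is} parametrized solely by radius and therefore amenable to rescaling; only after that does the rescaling step go through. Your proof omits this neck-insertion step for the head case, and without it the proposed family $R^{(2)}_{s}$ does not produce smooth metrics on $S^{n}$.
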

\begin{proof}
This is similar to the proof of Lemma \ref{spheredefret}. The map $\Mov^{\rm b/h}$ fixes metrics which lie in $\Riem^{+}_{{\rm{b/h}}({p_0}, \lambda_0, r_0)}(S^{n})$ and its composition with the inclusion of $\Riem^{+}_{{\rm{b/h}}({p_0}, \lambda_0, r_0)}(S^{n})$ into $\Riem^{+}_{{\rm{b/h}}(p_0)}(S^{n})$ is easily shown to be homotopic to the identity.
\end{proof}
\subsection{A product on the spaces $\Riem^{+}_{{\rm{b/h}}(p_0)}(S^{n})$} We now return to a construction alluded to at the end of section \ref{disksphere}. Once again, we let ${\bf p}$ and ${\bf q}$ denote finite collections of points $\{ p_0, p_1, \cdots p_k\}$ and $\{ q_0, q_1, \cdots q_l\}$ on $S^{n}$. For some $i\in\{0,1,\cdots ,k\}$, we begin by composing the maps $\Cut_{p_i, \rho}$ and $\Mov_{p_i, \lambda, r}^{\rm b/h}$, defined in \ref{cutmap} and \ref{movmap}, for some $\rho\geq 0$ and $\lambda, r>0$ with $r\in(0,\lambda\frac{\pi}{2}]$. Note that the values $\lambda$ and $r$ are not apriori connected with $p$. Any bulb around $p$ will be made to ``fit" those values. With this in mind we define a new map $\Fit_{p_i, \lambda, r_, \rho}^{\rm b/h}=\Cut_{p_i, \rho}\circ \Mov_{p_i, \lambda, r}^{\rm b/h}$. Just to clarify, the map $\Fit_{p_i, \lambda, r, \rho}^{\rm b/h}$ takes the form:
\begin{equation}\label{fitmap}
\begin{split}
\Fit_{p_i, \lambda, r, \rho}^{\rm b/h}:\Riem^{+}_{{\rm{b/h}}({\bf p})}(S^{n})&\longrightarrow\Riem_{{\lens}(0)}^{+}(D^{n})\\
g&\longmapsto \begin{cases}
\Mov_{p_i, \lambda, r}^{\rm b/h}(g)|_{S^{n}\setminus B_{\lambda}(p_i,\rho)}, & \text{if }0<\rho\leq\lambda\pi-r,\\
\Mov_{p_i, \lambda, r}^{\rm b/h}(g)|_{S^{n}\setminus B_{\lambda}(p_i,\lambda\pi-r)}, & \text{if }\rho>\lambda\pi-r.
\end{cases}
\end{split}
\end{equation}
For example, the metric $\Fit_{p_i, \lambda, r, \lambda\frac{\pi}{2}}^{\rm b/h}(g)$ is obtained by removing the round northern hemisphere of radius $\lambda$ about $p_i$, from $\Mov_{p_i,\lambda,r}^{\rm b/h}(g)$. On the other hand, the output metric $\Fit_{p_i, \lambda, r, \lambda\pi-r}^{\rm b/h}(g)$ is obtained by removing the head, but not the neck, of the metric $\Mov_{p_i,\lambda,r}(g)^{\rm b/h}$ about $p_i$. In particular, we see that the boundary of the output metric $\Fit_{p_i, \lambda, r, \lambda\pi-r }^{\rm b/h}(g)$, smoothly attaches to the boundary metric $g_{\lens}(\lambda, \lambda\pi-r)$.  We therefore obtain, for each pair $(i,j)$ with $i\in \{ p_0, p_1, \cdots p_k\}$ and $j\in \{ q_0, q_1, \cdots q_l\}$, the following ``joining" map for psc-metrics on $S^{n}$ with bulbs or heads:

\begin{equation}\label{bulbjoin}
\begin{split}
J_{ij}^{\rm b/h({\lambda, r})}:\Riem^{+}_{{\rm{b/h}}({\bf p})}(S^{n})\times\Riem^{+}_{{\rm{b/h}}({\bf q})}(S^{n})&\longrightarrow\Riem^{+}_{{\rm{b/h}}(\{{\bf p}\setminus\{p_i\}\}\cup\{ {\bf q}\setminus\{ q_j\}\})}(S^{n})\\
(g,h)&\longmapsto \Fit_{p_i, \lambda,r,\lambda\pi-r}^{\rm b/h}(g)\cup\Fit_{q_j, \lambda, \lambda \frac{r_j(h)}{\lambda_j(h)}, r}^{\rm b/h}(h),
\end{split}
\end{equation}
where $r_j(h)$ is the head angle and  $\lambda_{j}(h)$ is the head radius at $q_j$ of the metric $h$. Note that the output metric is obtained by gluing the metrics $\Fit_{p_i, \lambda,r,\lambda\pi-r}^{\rm b/h}(g)$ and $\Fit_{q_j, \lambda, \lambda \frac{r_j(h)}{\lambda_j(h)}, r}^{\rm b/h}(h)$ together in the obvious way. To aid the read we depict an example in the case of psc-metrics with bulbs in Fig. \ref{bulbyspherejoin} below.
\begin{figure}[!htbp]
\vspace{6cm}
\hspace{-2.0cm}
\begin{picture}(0,0)
\includegraphics{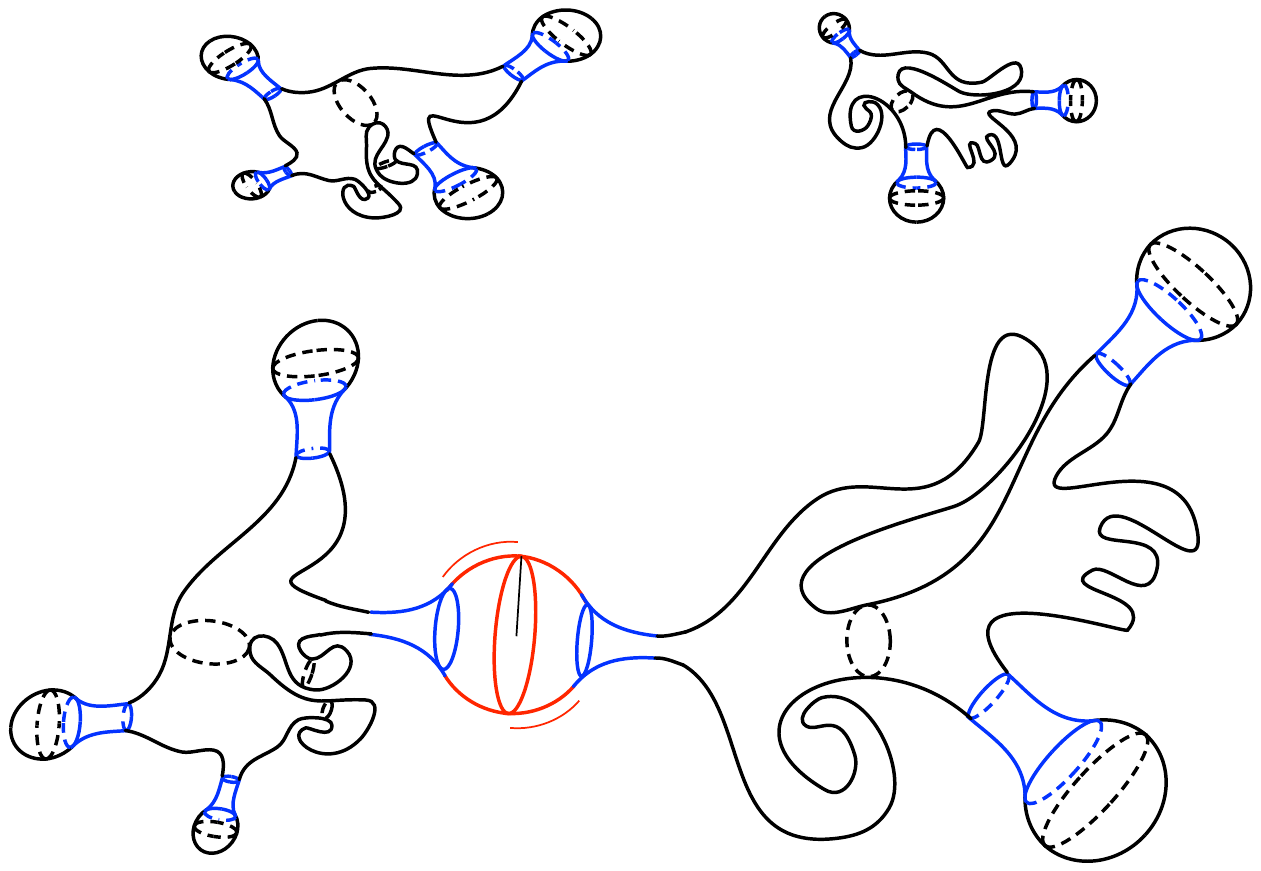}%
\end{picture}
\setlength{\unitlength}{3947sp}%
\begingroup\makeatletter\ifx\SetFigFont\undefined%
\gdef\SetFigFont#1#2#3#4#5{%
  \reset@font\fontsize{#1}{#2pt}%
  \fontfamily{#3}\fontseries{#4}\fontshape{#5}%
  \selectfont}%
\fi\endgroup%
\begin{picture}(5079,1559)(1902,-7227)
\put(3850,-5300){\makebox(0,0)[lb]{\smash{{\SetFigFont{10}{8}{\rmdefault}{\mddefault}{\updefault}{\color[rgb]{0,0,0}$\lambda\frac{\pi}{2}-r$}%
}}}}
\put(4500,-6350){\makebox(0,0)[lb]{\smash{{\SetFigFont{10}{8}{\rmdefault}{\mddefault}{\updefault}{\color[rgb]{0,0,0}$\lambda(\frac{\pi}{2}-\frac{r_{j}(h)}{\lambda_{j}(h)})$}%
}}}}
\put(4320,-5700){\makebox(0,0)[lb]{\smash{{\SetFigFont{10}{8}{\rmdefault}{\mddefault}{\updefault}{\color[rgb]{0,0,0}$\lambda$}%
}}}}
\put(4240,-3860){\makebox(0,0)[lb]{\smash{{\SetFigFont{10}{8}{\rmdefault}{\mddefault}{\updefault}{\color[rgb]{0,0,0}$p_i$}%
}}}}
\put(3600,-2850){\makebox(0,0)[lb]{\smash{{\SetFigFont{10}{8}{\rmdefault}{\mddefault}{\updefault}{\color[rgb]{0,0,0}$g$}%
}}}}
\put(6600,-2850){\makebox(0,0)[lb]{\smash{{\SetFigFont{10}{8}{\rmdefault}{\mddefault}{\updefault}{\color[rgb]{0,0,0}$h$}%
}}}}
\put(5700,-2820){\makebox(0,0)[lb]{\smash{{\SetFigFont{10}{8}{\rmdefault}{\mddefault}{\updefault}{\color[rgb]{0,0,0}$q_j$}%
}}}}

\end{picture}%
\caption{The metric $J_{ij}^{\bulb({\lambda, r})}(g,h)$  (bottom) obtained from $g$ (left) and $h$ (right)}
\label{bulbyspherejoin}
\end{figure}   

We now return to the case when ${\bf p}={\bf q}=\{p_0\}$. As in the case of psc-metrics with torpedo caps, the maps $J_{00}^{\rm b/h({\lambda,r})}$ for $\lambda>0, r\in(0,\lambda\frac{\pi}{2}]$, do not quite give us the product we need. We solve this problem as we did in the torpedo case, when defining the multiplication $\mu^{\torp}$ in \ref{muprod1}. Let $p_1$ and $p_2$ be two distinct points on $S^{n}\setminus \{p_0\}$ and redefine ${\bf p}=\{p_0, p_1, p_2\}$. 
We now define products on the spaces $\Riem_{{\rm b/h}({ p_0})}^{+}(S^{n})$ as follows. Consider for each $j=1,2$, the map:
\begin{equation*}
J_{0j}^{{\rm b/h}(\lambda,r)}:\Riem_{{\mathrm{b/h}}({p_0 })}^{+}(S^{n})\times \Riem_{{\mathrm{b,h}}({\bf p})}^{+}(S^{n})\longrightarrow \Riem_{{\mathrm{b/h}}({\bf p}\setminus\{p_j\})}^{+}(S^{n}),
\end{equation*}
defined as in formula \ref{bulbjoin}. We now fix a psc-metric $g_{3}\in \Riem_{{\rm b/h}({\bf p})}^{+}(S^{n})$ as the second input. Then for each of $j=1,2$, we obtain maps: 
\begin{equation}
\begin{split}
J_{0j}^{{\rm b/h}(g_3)}:\Riem_{{\mathrm{b/h}}({p_0 })}^{+}(S^{n})&\longrightarrow \Riem_{{\mathrm{b/h}}({\bf p}\setminus\{p_j\})}^{+}(S^{n}),\\
g&\longmapsto J_{0j}^{{\rm b/h}(\lambda_{j}(g_3), r_{j}(g_3))}(g, g_3).
\end{split}
\end{equation}
Finally, we define products $\mu^{\bulb}$ and $\mu^{\head}$ (which we notationally combine as $\mu^{\rm b/h}$) on the spaces $\Riem_{{\mathrm{bulb}}({p_0 })}^{+}(S^{n})$ and $\Riem_{{\mathrm{head}}({p_0 })}^{+}(S^{n})$ by means of the following continuous maps:
\begin{equation}\label{muprod2}
\begin{split}
\mu^{\rm b/h}:\Riem_{{\mathrm{b/h}}({p_0 })}^{+}(S^{n})\times \Riem_{{\mathrm{b/h}}({p_0 })}^{+}(S^{n})&\longrightarrow \Riem_{{\mathrm{b/h}}({p_0 })}^{+}(S^{n}),\\
(g,h)&\longmapsto J_{02}^{g_3}(h,J_{01}^{g_3}(g, g_3)).
\end{split}
\end{equation}
There are obvious analogues of Lemma \ref{homotprod} which clarify the role played by the various choices in determining these maps up to homotopy type but we will not state them here. We close by pointing out that for certain choices of $g_3$, the maps $\mu^{\rm b/h}$ determine an $H$-space structure on the respective spaces $\Riem_{{\mathrm{b/h}}({p_0 })}^{+}(S^{n})$.
\begin{Theorem}\label{HTheorem2}
Let $n\geq 3$ and let $\mu^{\rm b/h}$ be the multiplication map given by formula \ref{muprod2} with respect to a psc-metric $g_3^{\rm b/h}\in\Riem_{{\mathrm{b/h}}({p_0 })}^{+}(S^{n})$. In the case when the metric $g_3^{\rm b/h}$ is psc-isotopic to the round metric $ds_{n}^{2}$, $\mu^{\rm b/h}$ defines a homotopy product on $\Riem_{{\mathrm{b/h}}({p_0 })}^{+}(S^{n})$ with homotopy identity $g_{\rm id}^{\rm b/h}=ds_n^{2}$, the round metric of radius $1$. This gives $\Riem_{{\mathrm{b/h}}({p_0 })}^{+}(S^{n})$ the structure of an $H$-space. Furthermore, this product is both homotopy commutative and homotopy associative.
\end{Theorem}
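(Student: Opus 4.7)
The plan is to follow the same three-step strategy used in the proof of Theorem \ref{HTheorem}, replacing the torpedo caps by bulbs (resp. heads) and replacing the cylindrical joining map $J^{\cyl(f)}$ by the lens-based joining map $J^{\rm b/h(\lambda,r)}$ from \ref{bulbjoin}. As before, Lemma \ref{homotprod} (or rather its bulb/head analogue, which the authors allude to but do not state) shows that the homotopy class of $\mu^{\rm b/h}$ depends only on the psc-isotopy class of the chosen intermediary $g_3^{\rm b/h}$; since $g_3^{\rm b/h}$ is assumed psc-isotopic to $ds_n^2$, we may replace it by any convenient representative in its isotopy class. The natural choice is to take $g_3^{\rm b/h}$ to itself be obtained from the round metric $ds_n^2$ by pushing out three equidistant bulbs/heads at points $p_0,p_1,p_2$ via Lemma \ref{bulblemma2}, exactly parallel to the metric $g_3^m$ of Example \ref{g3eg}.

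For the homotopy identity, I would show the map $g\mapsto \mu^{\rm b/h}(g,ds_n^2)$ is homotopic to the identity (the other side is symmetric). Here the advantage of the bulb/head setting appears: the constant round metric $ds_n^2$ is literally the neutral element for the lens attachment, since after applying $\Mov^{\rm b/h}_{p_0,\lambda,r}$ its head and neck are exactly those prescribed by $g_3^{\rm b/h}$. Thus, as in Figure \ref{homidentity2}, one contracts the auxiliary bulb at $p_2$ back down to a point using Lemma \ref{GLfamily}, rotates, and then shrinks the remaining connecting tube and head of $g_3^{\rm b/h}$ along a linear psc-isotopy (built from the explicit rotationally symmetric warped products of section \ref{disksphere}) until one recovers $g$. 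Because this entire deformation is performed on the $g_3^{\rm b/h}$-side of the construction, it is independent of the input $g$, yielding a homotopy of maps.

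Homotopy commutativity follows at once from the bulb analogue of Lemma \ref{homotprod} applied with $(p_1',p_2')=(p_2,p_1)$, which is realised concretely by the continuous $\O(n{+}1)$-rotation of $S^n$ swapping $p_1$ and $p_2$, exactly as in Figure \ref{rotate}. Homotopy associativity is again proved by producing a psc-isotopy between the two iterated products. As in the proof of Theorem \ref{HTheorem}, one fixes two copies $g_3^{\rm b/h}$ and $(g_3^{\rm b/h})'$ of the intermediary, observes that $J_{01}^{\rm b/h}(g_3^{\rm b/h},(g_3^{\rm b/h})')$ may be continuously deformed, by sliding the attaching tubes of secondary factors along a bounded arc (via Lemma \ref{GLfamily}), into the configuration corresponding to $\mu^{\rm b/h}(g,\mu^{\rm b/h}(h,h'))$ starting from $\mu^{\rm b/h}(\mu^{\rm b/h}(g,h),h')$.

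The main obstacle, and the one genuinely new ingredient beyond the torpedo case, is bookkeeping in the fitting map $\Fit^{\rm b/h}_{p_i,\lambda,r,\rho}$: because bulbs carry three continuous parameters (head radius $\lambda$, head angle $r$, and the derived neck radius $\delta(\lambda,r)$) rather than a single boundary radius, one must verify that all the rescalings in $\Mov^{\rm b/h}$ used inside $J^{\rm b/h(\lambda,r)}_{ij}$ can be varied continuously during each of the homotopies above without breaking the lens-matching condition at boundaries. This is handled by noting that the rescalings are convex-combinable (the space of rescaling functions is contractible) and that Lemma \ref{bulblemma2} gives continuous dependence of $\delta, r^*$ on the background data, so at each stage one adjusts the auxiliary parameters along a straight-line path in parameter space; positivity of scalar curvature is preserved throughout by Lemma \ref{GLfamily}. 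Once this continuity is established, the three arguments run in close parallel to the proof of Theorem \ref{HTheorem}.
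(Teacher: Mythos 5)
Your proposal is correct and follows exactly the route the paper intends: the paper's proof of Theorem \ref{HTheorem2} consists of the single sentence that it is ``completely analogous to that of Theorem \ref{HTheorem},'' and your three steps (homotopy identity via contraction of the $p_2$-bulb and the residual neck of $g_3^{\rm b/h}$, commutativity via the rotation swapping $p_1$ and $p_2$, associativity via sliding attaching tubes) are precisely the bulb/head translations of the paper's torpedo arguments. Your final paragraph identifying the extra parameter-bookkeeping in $\Fit^{\rm b/h}$ as the one genuinely new point is a reasonable and correct observation the paper leaves implicit.
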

\begin{Remark}
The reader should note that the standard round metric, of any radius, is an element of both spaces $\Riem_{{\mathrm{b/h}}({p_0})}^{+}(S^{n})$, where $p_0$ is the north pole and $D_{p_0}$ is the northern hemisphere.
\end{Remark}
\begin{proof}
The proof is completely analogous to that of Theorem \ref{HTheorem}.
\end{proof}
\noindent In section \ref{Dspace}, we will make considerable use of the multiplication $\mu^{\head}$ on a subspace of the space $\Riem_{{\mathrm{head}}({p_0})}^{+}(S^{n})$ to prove our main result. In the mean time we will switch our focus somewhat and discuss a collection of objects known as operads.

\section{Little Disks, Trees and the $W$-construction for operads}\label{Operadsection}
We now turn our attention to the second of our main results, Theorem \ref{LoopThm}, which states that $\Riem^{+}(S^{n})$ is weakly homotopy equivalent to an $n$-fold loop space when $n=3$ and $n\geq 5$. The problem of recognising when an $H$-space is an iterated loop space is an old problem in Algebraic Topology; see \cite{Stash} for an account of this story. A key step in tackling this problem was the discovery by Boardman and Vogt that an $n$-fold loop space is a $\D_n$-space. That is, it admits an action of the {\em operad of little $n$-dimensional disks}. Before explaining what this means, we should point out that we are interested in a converse to this proposition. That is, given a space which admits such an action, is it an $n$-fold loop space? Under reasonable conditions such a converse holds. This is the subject of a theorem of Boardman, Vogt and May, which we will state shortly as Theorem \ref{BVM} and which helps us to prove our main result, Theorem \ref{LoopThm}.  Before that however, we need to discuss the aforementioned operad of little disks. Much of this section is based on the work of Boardman, Vogt and May in \cite{BV}, \cite{May} and \cite{V}.

The following definition of an operad is due to P. May \cite{May}. The definition itself is rather involved, however it is followed by two very illustrative examples which the reader may wish to study first. These examples, the operad of little disks and the operad of grown trees, will play a central role in our main construction. An {\em operad} $\P$ consists of a collection of compactly generated Hausdorff topological spaces $\P(j)$, $j\in\{0,1,2,\cdots\}$, together with the following data:
\begin{enumerate}
\item{} The space $\P(0)$ is a single point $*$.
\item{} There are continuous maps (known as {\em composition maps}) 
\begin{equation*}
\gamma:\P(k)\times\P(j_1)\times\P(j_2)\times\cdots\times\P(j_k)\longrightarrow \P(j),
\end{equation*}
where $\Sigma j_s=j$, and which satisfy the following associativity condition for all $c\in \P(k), d_s\in\P(j_s)$ and $e_t\in\P(i_t)$:
\begin{equation*}
\gamma(\gamma(c;d_1,\cdots,d_k);e_1, \cdots, e_j)=\gamma(c;f_1,\cdots,f_k),
\end{equation*}
where $f_s=\gamma(d_s;e_{j_1+j_2+\cdots+e_{s-1}+1},\cdots,e_{j_1+\cdots+j_s})$, and $f_s=*$ if $j_s=0$.
\item{} There is an identity elemnt $1\in\P(1)$ so that $\gamma(1;d)=d$ for all $d\in\P(j)$ and $\gamma(c;1^{k})$ for $c\in\P(k)$, $1^{k}=(1,\cdots,1)\in\P(k)$. 
\item{} There is a right operation of the symmetric group $\Sigma_j$ on $\P(j)$ so that the following equivariance formulae are satisfied for all $c\in\P(k), d_s\in\P(j_s), \sigma\in\Sigma_k$ and $\tau_s\in\Sigma_{j_s}$:
\begin{equation*}
\gamma(c\sigma;d_1,\cdots,d_k)=\gamma(c;d_{\sigma^{-1}(1)},\cdots,d_{\sigma^{-1}(k)})\sigma(j_1,\cdots,j_k)
\end{equation*}
and $\gamma (c;d_1\tau_1,\cdots,d_k\tau_k)=\gamma (c;d_1,\cdots,d_k) (\tau_1\oplus\cdots \oplus\tau_k)$, where $\sigma (j_1,\cdots , j_k)$ denotes the permutation of $j$ letters which permutes the $k$ blocks of letters determined by the given partition of $j$ as $\sigma$ permutes $k$ letters, and $\tau_1\oplus\cdots\oplus\tau_k$ denotes the image of $(\tau_1,\cdots,\tau_k)$ under the obvious inclusion of $\Sigma_{j_1}\times \cdots\times\Sigma_{j_k}$ into $\Sigma_{j}$.
\end{enumerate}

\subsection{The operad of little $n$-dimensional disks}
We now consider a very important example.  For $n\geq 1$ we recall that $D^{n}$ denotes the standard closed unit disk in $\mathbb{R}^{n}$. For each point $p$ in the interior of $D^{n}$ and each quantity $\epsilon$ where $0<\epsilon\leq 1-|p|$, let $D(p,\epsilon)$ denote the round disk of radius $\epsilon$ which is centred at $p$. Let $j\geq 0$ be an integer. We denote by $\D(j)_{n}$, the set of ordered $j$-tuples of closed round disks $D(p_i, \epsilon_i)$ where $i=1,\cdots j$ and which satisfy the following conditions:
\begin{enumerate}
\item{} $\oD (p_i,\epsilon_i)\subset \oDn$ for all $i=1,\cdots ,j$,
\item{} $\oD (p_i,\epsilon_i)\cap \oD (p_k,\epsilon_k)=\emptyset$ for all $i,k\in\{1,\cdots,j\}$.
\end{enumerate}
To ease notation we will fix an $n$ and simply write $\D(j)$ instead of $\D(j)_{n}$. Each element of $\D(j)$ is therefore an ordered $j$-tuple of little disks. By viewing each such element as a collection of pairs $(p_i,\epsilon_i)$ we may topologise $\D(j)$ by identifying it with an appropriate subspace of the space $(D^{n}\times I)^{j}$. There is an obvious action on $\D(j)$ of the permutation group $\Sigma_j$, where for any pair $c\in \D(j)$, $\sigma\in\Sigma_j$, the element $c\sigma$ has little disks labelled $\sigma(1),\sigma(2),\cdots\sigma( j)$. We illustrate this for an element of $\D(3)$, where $\sigma=(1\hspace{1mm} 2\hspace{1mm} 3)$, in Fig. \ref{opsymmaction} below.
 \begin{figure}[!htbp]
\vspace{1.0cm}
\hspace{1.50cm}
\begin{picture}(0,0)
\includegraphics{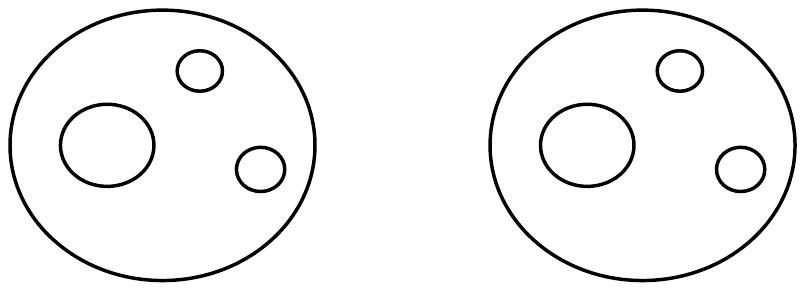}%
\end{picture}
\setlength{\unitlength}{3947sp}%
\begingroup\makeatletter\ifx\SetFigFont\undefined%
\gdef\SetFigFont#1#2#3#4#5{%
  \reset@font\fontsize{#1}{#2pt}%
  \fontfamily{#3}\fontseries{#4}\fontshape{#5}%
  \selectfont}%
\fi\endgroup%
\begin{picture}(5079,1559)(1902,-7227)
\put(2780,-7106){\makebox(0,0)[lb]{\smash{{\SetFigFont{10}{8}{\rmdefault}{\mddefault}{\updefault}{\color[rgb]{0,0,0}$c$}%
}}}}
\put(2500,-6406){\makebox(0,0)[lb]{\smash{{\SetFigFont{10}{8}{\rmdefault}{\mddefault}{\updefault}{\color[rgb]{0,0,0}$1$}%
}}}}
\put(2750,-6000){\makebox(0,0)[lb]{\smash{{\SetFigFont{10}{8}{\rmdefault}{\mddefault}{\updefault}{\color[rgb]{0,0,0}$2$}%
}}}}
\put(3050,-6406){\makebox(0,0)[lb]{\smash{{\SetFigFont{10}{8}{\rmdefault}{\mddefault}{\updefault}{\color[rgb]{0,0,0}$3$}%
}}}}

\put(5000,-7106){\makebox(0,0)[lb]{\smash{{\SetFigFont{10}{8}{\rmdefault}{\mddefault}{\updefault}{\color[rgb]{0,0,0}$c\sigma$}%
}}}}
\put(4770,-6406){\makebox(0,0)[lb]{\smash{{\SetFigFont{10}{8}{\rmdefault}{\mddefault}{\updefault}{\color[rgb]{0,0,0}$2$}%
}}}}
\put(5050,-6000){\makebox(0,0)[lb]{\smash{{\SetFigFont{10}{8}{\rmdefault}{\mddefault}{\updefault}{\color[rgb]{0,0,0}$3$}%
}}}}
\put(5350,-6406){\makebox(0,0)[lb]{\smash{{\SetFigFont{10}{8}{\rmdefault}{\mddefault}{\updefault}{\color[rgb]{0,0,0}$1$}%
}}}}
\end{picture}%
\caption{The action of $\Sigma_3$ on $\D(3)$}
\label{opsymmaction}
\end{figure}    
Notice that for each little disk in an element of $\D(j)$, there is a canonical homeomorphism which identifies it with the larger unit disk $D^{n}$ i.e. shrink $D^{n}$ and translate. This allows us to construct the following ``fitting" map. Consider the product space $\D(k)\times \D(j_1)\times\cdots\times \D(j_k)$. Suppose $\{c, d_{j_1}, \cdots, d_{j_k}\}$ is an element of this space.  The first component $c$ consists of $k$ ordered little disks in $D^{n}$. By appropriately shrinking and translating the standard unit disk we may ``fit" each of the elements $d_{j_r}$ into the corresponding $r$-th little disk of $c$. The resulting object now consists of $j=\Sigma j_s$ little disks. Regarding the labelling, we apply the following rule. For each element $d_{j_k}\in \D(j_k)$, the corresponding $k$-th little disk in $\D(j)$ obtains its labels from the map: 
\begin{equation*}
(1,2,\cdots,j_k)\longmapsto (j_1+\cdots+ j_{k-1}+1, j_1+\cdots+ j_{k-1}+2,\cdots j_1+\cdots+ j_{k-1}+j_k).
\end{equation*}
This is shown for a particular example when $k=2, j_1=3$ and $j_2=2$ in Fig. \ref{diskoperad} below. The result is an element of $\D(j)$ which we denote $c(d_{j_1}, \cdots, d_{j_k})$. We summarise the {\em fitting map}, which we denote $\gamma$, as follows:
\begin{equation*}
\begin{split}
\gamma:\D(k)\times \D(j_1)\times\cdots\times \D(j_k)&\longrightarrow \D(j_1+\cdots+j_k)\\
(c;(d_{j_1},\cdots, d_{j_k}))&\longmapsto c(d_{j_1},\cdots, d_{j_k}).
\end{split}
\end{equation*}

\begin{figure}[!htbp]
\vspace{1.0cm}
\hspace{-3.0cm}
\begin{picture}(0,0)
\includegraphics{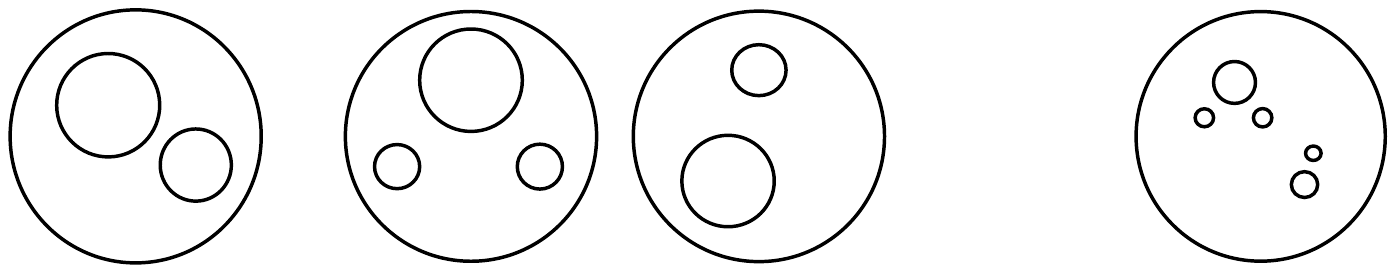}%
\end{picture}
\setlength{\unitlength}{3947sp}%
\begingroup\makeatletter\ifx\SetFigFont\undefined%
\gdef\SetFigFont#1#2#3#4#5{%
  \reset@font\fontsize{#1}{#2pt}%
  \fontfamily{#3}\fontseries{#4}\fontshape{#5}%
  \selectfont}%
\fi\endgroup%
\begin{picture}(5079,1559)(1902,-7227)
\put(2550,-7006){\makebox(0,0)[lb]{\smash{{\SetFigFont{10}{8}{\rmdefault}{\mddefault}{\updefault}{\color[rgb]{0,0,0}$c$}%
}}}}
\put(2350,-6106){\makebox(0,0)[lb]{\smash{{\SetFigFont{10}{8}{\rmdefault}{\mddefault}{\updefault}{\color[rgb]{0,0,0}$1$}%
}}}}
\put(2750,-6356){\makebox(0,0)[lb]{\smash{{\SetFigFont{10}{8}{\rmdefault}{\mddefault}{\updefault}{\color[rgb]{0,0,0}$2$}%
}}}}

\put(4050,-7006){\makebox(0,0)[lb]{\smash{{\SetFigFont{10}{8}{\rmdefault}{\mddefault}{\updefault}{\color[rgb]{0,0,0}$d_3$}%
}}}}
\put(3750,-6406){\makebox(0,0)[lb]{\smash{{\SetFigFont{10}{8}{\rmdefault}{\mddefault}{\updefault}{\color[rgb]{0,0,0}$1$}%
}}}}
\put(4100,-6106){\makebox(0,0)[lb]{\smash{{\SetFigFont{10}{8}{\rmdefault}{\mddefault}{\updefault}{\color[rgb]{0,0,0}$2$}%
}}}}
\put(4450,-6406){\makebox(0,0)[lb]{\smash{{\SetFigFont{10}{8}{\rmdefault}{\mddefault}{\updefault}{\color[rgb]{0,0,0}$3$}%
}}}}

\put(5550,-7006){\makebox(0,0)[lb]{\smash{{\SetFigFont{10}{8}{\rmdefault}{\mddefault}{\updefault}{\color[rgb]{0,0,0}$d_2$}%
}}}}
\put(5350,-6406){\makebox(0,0)[lb]{\smash{{\SetFigFont{10}{8}{\rmdefault}{\mddefault}{\updefault}{\color[rgb]{0,0,0}$1$}%
}}}}
\put(5450,-5906){\makebox(0,0)[lb]{\smash{{\SetFigFont{10}{8}{\rmdefault}{\mddefault}{\updefault}{\color[rgb]{0,0,0}$2$}%
}}}}

\put(7750,-7006){\makebox(0,0)[lb]{\smash{{\SetFigFont{10}{8}{\rmdefault}{\mddefault}{\updefault}{\color[rgb]{0,0,0}$c(d_3, d_2)$}%
}}}}
\put(7550,-6256){\makebox(0,0)[lb]{\smash{{\SetFigFont{10}{8}{\rmdefault}{\mddefault}{\updefault}{\color[rgb]{0,0,0}$1$}%
}}}}
\put(7750,-5960){\makebox(0,0)[lb]{\smash{{\SetFigFont{10}{8}{\rmdefault}{\mddefault}{\updefault}{\color[rgb]{0,0,0}$2$}%
}}}}
\put(7840,-6256){\makebox(0,0)[lb]{\smash{{\SetFigFont{10}{8}{\rmdefault}{\mddefault}{\updefault}{\color[rgb]{0,0,0}$3$}%
}}}}
\put(8230,-6486){\makebox(0,0)[lb]{\smash{{\SetFigFont{10}{8}{\rmdefault}{\mddefault}{\updefault}{\color[rgb]{0,0,0}$4$}%
}}}}
\put(8250,-6326){\makebox(0,0)[lb]{\smash{{\SetFigFont{10}{8}{\rmdefault}{\mddefault}{\updefault}{\color[rgb]{0,0,0}$5$}%
}}}}

\end{picture}%
\caption{The fitting map $\gamma$ in action}
\label{diskoperad}
\end{figure}   
It is a straightforward (albeit tedious) exercise to show that the fitting maps $\gamma$ satisfy the associativity and permutation equivariance conditions described in properties (1) and (3) of the definition on an operad above. Furthermore, property (2) is satisfied by taking the identity element $1\in\D(1)$ as the disk $D^{n}$ with itself as the lone little disk. Finally, we define the space $\D$ as:
\begin{equation*}
{\D}=\bigcup_{j\geq 0}\D(j),
\end{equation*}
where $\D(0)$ is the single point $*$.
Recall that we have supressed the dimension, $n$, of the underlying disk. For each $n$, the space $\D=\D_{n}$, along with the appropriate collection of fitting maps $\gamma$, is known as the {\em operad of little $n$-dimensional disks}. Before discussing our second example, the operad of grown trees, it is worth considering a variation on the little-disks operad which will be useful for us later on.

\subsection{Little disks, little lenses and little bulbs}
Recall from section \ref{lensection}, that on a round $n$-dimensional sphere of radius $\lambda$, we described a canonical way of identifying the $(\lambda,r)$-lens at the point $p\in S^{n}$, $B_{\lambda}(p,r)$, with the disk $D^{n}$. We will assume here that $r\in(0,\lambda\frac{\pi}{2}]$. Using this identification we may, for any point $p\in S^{n}$ and such a pair $\lambda, r$, reinvent the operad of little $n$-dimensional disks on $D^{n}$ as an operad of little $n$-dimensional lenses on $B_{\lambda}(p,r)$. The centre points of little disks are determined by this identification while the radii are determined by the map $\epsilon\mapsto r\epsilon$, where $\epsilon\in(0,1]$ denotes the radius of a little disk in $D^{n}$. In Fig. \ref{lensoperad}, we depict an example of this where $p$ is the north pole. 
\begin{figure}[!htbp]
\vspace{+1cm}
\hspace{-0.0cm}
\begin{picture}(0,0)
\includegraphics{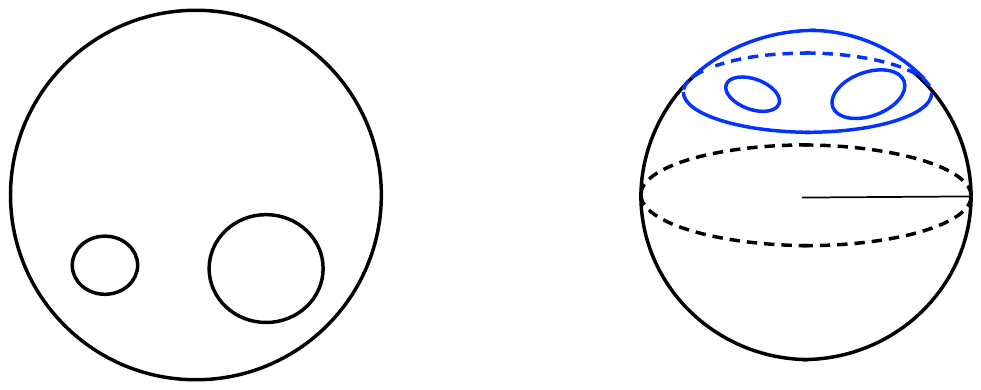}%
\end{picture}
\setlength{\unitlength}{3947sp}%
\begingroup\makeatletter\ifx\SetFigFont\undefined%
\gdef\SetFigFont#1#2#3#4#5{%
  \reset@font\fontsize{#1}{#2pt}%
  \fontfamily{#3}\fontseries{#4}\fontshape{#5}%
  \selectfont}%
\fi\endgroup%
\begin{picture}(5079,1559)(1902,-7227)
\put(5250,-5300){\makebox(0,0)[lb]{\smash{{\SetFigFont{10}{8}{\rmdefault}{\mddefault}{\updefault}{\color[rgb]{0,0,0}$B_{\lambda}(p,r)$}%
}}}}
\put(6100,-6100){\makebox(0,0)[lb]{\smash{{\SetFigFont{10}{8}{\rmdefault}{\mddefault}{\updefault}{\color[rgb]{0,0,0}$\lambda$}%
}}}}
\end{picture}%
\caption{Reinventing the little disks operad as a little lens operad}
\label{lensoperad}
\end{figure}   
Thus, instead of thinking of $j$-tuples of little disks in $D^{n}$ we may substitute $j$-tuples of little lenses in $B_{\lambda}(p,r)$. Furthermore, we can obtain a psc-metric representative of each such element using the work done in Lemmas \ref{bulblemma} and \ref{bulblemma2}. We will state this in the form of a lemma below.

\begin{Lemma}\label{psclensoperadlemma}
Let $n\geq 3, p\in S^{n}, \lambda>0$ and $r\in(0,\lambda\frac{\pi}{2}]$. Let $c\in \D(j)$ and let $\{B_{\lambda}(p_i, r_i)\}$ denote the collection of little lens in $B_{\lambda}(p,r)$ arising from the identification above. Then there is a psc-metric $g_c\in \Riem_{\head({\bf p})}^{+}(S^{n})$ and a continuous family of psc-metrics $g_c(t_1,\cdots, t_j)$ where each $t_i\in I$, so that the following conditions hold:
\begin{enumerate}
\item{} $g_{c}(0,\cdots, 0)=\lambda^{2}ds_{n}^{2}$ and $g_c(1,\cdots , 1)=g_c$.
\item{} Outside of $\bigcup_{i=1}^{j}\{B_{\lambda(p_i, r_i)}\}$, the metric $g_c(t_1, \cdots, t_j)$ is the standard round metric of radius $\lambda$ for all $(t_1, \cdots, t_j)\in I^{j}$.
\item{} On each ball $B_{\lambda}(p_i, r_i)$, the metric $g_c(t_1, \cdots, t_i=1, \cdots, t_j)$ is precisely the metric obtained by pushing out a bulb with head radius $1$ using the method of Lemma \ref{bulblemma2}. 
\item{} For each $i$, there are continuous parameters $\lambda_{i}(t)>0, r_i(t)>0$ and $\epsilon_{i}(t)\in(0,r_i]$, where $t\in I$, so that the restriction $g_c(t_1, \cdots, t_j)$ to the ball $B_{\lambda}(p_i, \epsilon_i)$ is the lens metric $g_{\lens}(\lambda_{i}(t_i), r_i(t_i))$. 
\item{} The parameters $r_i, \lambda_i$ and $\epsilon_i$ above satisfy $\lambda_i(0)=\lambda$, $\lambda_i(1)=1$, $r_i(0)=r_i$, $r_i(1)=\frac{\pi}{2}$ and $\epsilon_{i}(0)=r_i$.
\end{enumerate}
\end{Lemma}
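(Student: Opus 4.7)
The plan is to apply Lemma \ref{bulblemma2} in parallel within each of the $j$ pairwise disjoint little lens regions $B_{\lambda}(p_i,r_i)$, starting from the round metric $\lambda^{2}ds_{n}^{2}$. By definition of an element $c\in\D(j)$ the interiors of the little disks are disjoint, hence so are the interiors of the corresponding little lenses. This disjointness is what makes the construction completely modular: the push-out procedure around $p_i$ has support contained in a geodesic ball inside $B_{\lambda}(p_i,r_i)$ and therefore commutes with the analogous push-out at $p_k$ for $k\neq i$. The parameter $t_i\in I$ will independently track the progress of the $i$-th push-out, so the entire $j$-parameter family is obtained by gluing together one-parameter families coming from Lemma \ref{bulblemma2}.

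More precisely, I would proceed as follows. For each $i$, choose $\epsilon_i^{0}>0$ so that $B_{\lambda}(p_i,\epsilon_i^{0})\subset\mathrm{int}(B_{\lambda}(p_i,r_i))$. Apply Lemma \ref{bulblemma2} to the Riemannian manifold $(S^{n},\lambda^{2}ds_n^{2})$ with basepoint $p_i$, ball $B_{\lambda}(p_i,\epsilon_i^{0})$, target head radius $1$, and target head angle $\pi/2$. This yields a psc-isotopy $g_{\mathrm{bulb}}^{(i)}(t_i)$, $t_i\in I$, that is trivial outside $B_{\lambda}(p_i,\epsilon_i^{0})$, agrees at $t_i=0$ with $\lambda^{2}ds_n^{2}$, and at $t_i=1$ restricts on some subball $B_{\lambda}(p_i,\epsilon_i(1))$ to the bulb metric $g_{\mathrm{bulb}}(1,r_i^{*})$ for some $r_i^{*}\in(0,\pi/2]$ depending continuously on the data. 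Because the supports are disjoint, we define
\begin{equation*}
g_c(t_1,\ldots,t_j)\;=\;\lambda^{2}ds_n^{2}\;+\;\sum_{i=1}^{j}\bigl(g_{\mathrm{bulb}}^{(i)}(t_i)-\lambda^{2}ds_n^{2}\bigr),
\end{equation*}
interpreted as the metric which equals $g_{\mathrm{bulb}}^{(i)}(t_i)$ on $B_{\lambda}(p_i,\epsilon_i^{0})$ for each $i$ and equals $\lambda^{2}ds_n^{2}$ elsewhere. Setting $g_c:=g_c(1,\ldots,1)$ gives the required metric.

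Once the family is built, verification of the five conditions is essentially bookkeeping. Condition (1) is immediate at $(0,\ldots,0)$ and $(1,\ldots,1)$. Condition (2) follows because each modification is supported strictly inside its little lens. Condition (3) is precisely the conclusion of Lemma \ref{bulblemma2} applied with target head radius $1$. For condition (4), the psc-isotopy produced by Lemma \ref{bulblemma2} passes through metrics that, on a continuously varying subball about $p_i$, agree with a round metric of some radius $\lambda_i(t_i)$ interpolating between $\lambda$ and $1$ over a geodesic region of angular radius interpolating between $r_i$ and $\pi/2$; recording these radii as $\lambda_i(t_i),r_i(t_i),\epsilon_i(t_i)$ supplies the required lens metric structure $g_{\mathrm{lens}}(\lambda_i(t_i),r_i(t_i))$ on $B_{\lambda}(p_i,\epsilon_i(t_i))$. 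Condition (5) is then just the boundary values of these interpolations.

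The only potentially delicate point is the simultaneity of the $j$ push-outs as the configuration $c$ varies continuously: as little lenses deform, their supports $B_{\lambda}(p_i,\epsilon_i^{0})$ must continue to be disjoint and to lie in the correct little lens. This is handled by choosing $\epsilon_i^{0}$ as a continuous function of the configuration (for instance, a fixed fraction of the minimum gap between little lenses), after which the continuous dependence of Lemma \ref{bulblemma2} on all its inputs delivers joint continuity of $g_c(t_1,\ldots,t_j)$ in $c$ and in $(t_1,\ldots,t_j)$. No genuinely new analytic content is required beyond Lemma \ref{bulblemma2}; the work is purely organisational.
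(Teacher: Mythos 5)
Your argument is correct and is exactly the argument the paper intends: the paper's proof is the single sentence ``This is an easy consequence of Lemmas \ref{bulblemma} and \ref{bulblemma2},'' and your parallel application of Lemma \ref{bulblemma2} inside the disjoint little lenses, with independent parameters $t_i$ and a subsequent bookkeeping of the head radius, head angle, and lens-ball parameters, is precisely the intended fleshing-out of that citation. The one place you gloss over slightly — that Lemma \ref{bulblemma2} only produces a bulb with head angle $r^*\leq\frac{\pi}{2}$, so the radius-$1$ hemisphere required by conditions (4)--(5) at $t_i=1$ is obtained as a proper sub-ball of the bulb head via Lemma \ref{bulblemma}(4) — is glossed over equally in the paper, so this is not a meaningful discrepancy.
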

\begin{proof}
This is an easy consequence of Lemmas \ref{bulblemma} and \ref{bulblemma2}.
\end{proof}
\begin{figure}[!htbp]
\vspace{+2cm}
\hspace{-0.0cm}
\begin{picture}(0,0)
\includegraphics{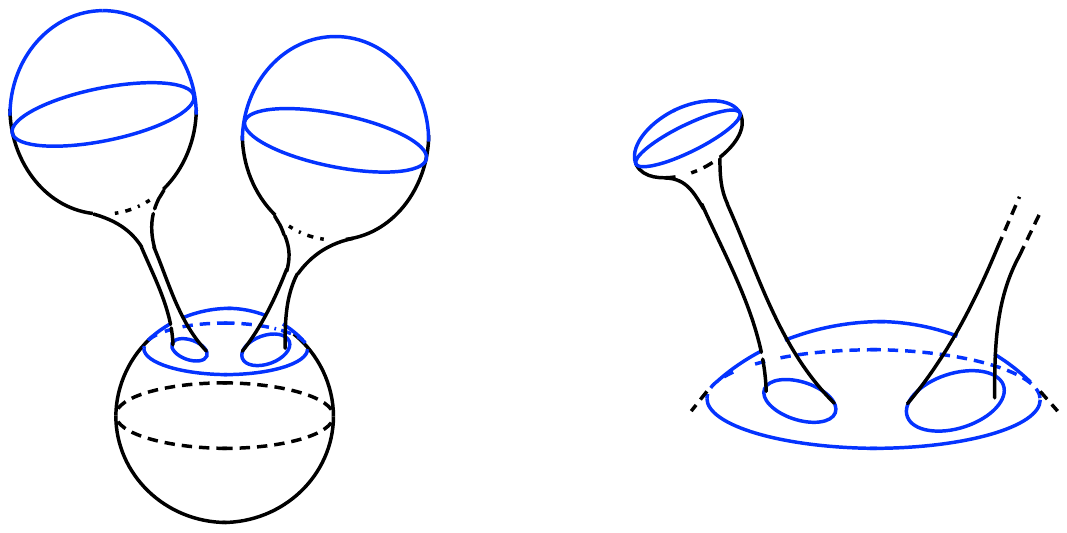}%
\end{picture}
\setlength{\unitlength}{3947sp}%
\begingroup\makeatletter\ifx\SetFigFont\undefined%
\gdef\SetFigFont#1#2#3#4#5{%
  \reset@font\fontsize{#1}{#2pt}%
  \fontfamily{#3}\fontseries{#4}\fontshape{#5}%
  \selectfont}%
\fi\endgroup%
\begin{picture}(5079,1559)(1902,-7227)
\put(5250,-5000){\makebox(0,0)[lb]{\smash{{\SetFigFont{10}{8}{\rmdefault}{\mddefault}{\updefault}{\color[rgb]{0,0,0}$B_{\lambda}(p_i,\epsilon_{i}(t_i))$}%
}}}}
\end{picture}%
\caption{The metric $g_c=g_c(1,\cdots,1)$ is depicted (left) for an element $c\in\D(2)$ along with a special focus on the $i^{th}$ bulb as it undergoes a continuous deformation via the psc-isotopy $g_c(1,\cdots t_i, \cdots,1), t_i\in I$ (right). The neighborhood $B_{\lambda}(p_i, \epsilon_i(t_i))$ on which the metric takes the form $g_{\lens}(\lambda_i(t_i), r_i(t_i))$ is highlighted.}
\label{psclensoperad}
\end{figure}   
\noindent The point of this rather technical lemma concerns operad composition. Roughly speaking, if we used the round hemisphere of radius $1$ at $p_i$ to push out bulbs corresponding to another element $c'\in \D(j)$ for some $j$, we could continuously deform the resulting psc-metric back to the one which we would have obtained by simply composing the elements $c$ and $c'$ at $p_i$. Furthermore, conditions (4) and(5) of Lemma \ref{psclensoperadlemma} mean that at each stage in the deformation we would have, as a restriction on a lens cap around $p_i$, precisely the restriction of the corresponding psc-metric for $c'$ on that lens. This will be of immense benefit later on. We now consider a second example of an operad.

\subsection{The operad of grown trees} A {\em tree} $T$ is a finite contractible planar graph with the exception that edges may have less than two adjacent vertices. Every tree must have at least one edge; the tree consisting of just one edge and no vertices is called the {\em trivial tree} and is depicted in Fig. \ref{Tree}. Edges which have two adjacent vertices are called {\em internal edges} while edges with only one adjacent vertex are called {\em external edges} or {\em legs}. The edges are oriented in the following way. Each vertex $v$ in $T$ has a set of incoming edges, denoted $\In(v)$, and exactly one outgoing edge. We allow for the case when $\In(v)=\emptyset$. In particular, this means that an internal edge is both an outgoing edge for one of its vertices (the starting vertex) and an incoming edge for the other (ending) vertex. Moreover, the set of external edges of $T$ consists of two mutually disjoint subsets: the set of {\em inputs} $\In(T)$ of all incoming edges of $T$ which have no starting vertices and the set consisting of the single outgoing edge or {\em output}, which has no end vertex. We typically depict trees with the edges directed from bottom to top and with inputs ordered from left to right. The orientation of the trivial tree is ambiguous so we simply choose one.

To aid the reader we provide an example. Consider the tree $T$ shown on the right of Fig. \ref{Tree}. This tree has $3$ internal edges and $7$ external edges. The output is the edge at the top adjacent to the vertex $v_1$. The remaining $6$ external edges are inputs. Notice also that, in the case of this tree, $|\In(v_1)|=2$, $|\In(v_2)|=4$, $|\In(v_3)|=3$, $|\In(v_4)|=0$ and $|\In(T)|=6$, where $|S|$ is the cardinality of a set $S$. 
\begin{figure}[!htbp]
\vspace{2.0cm}
\hspace{2.0cm}
\begin{picture}(0,0)
\includegraphics{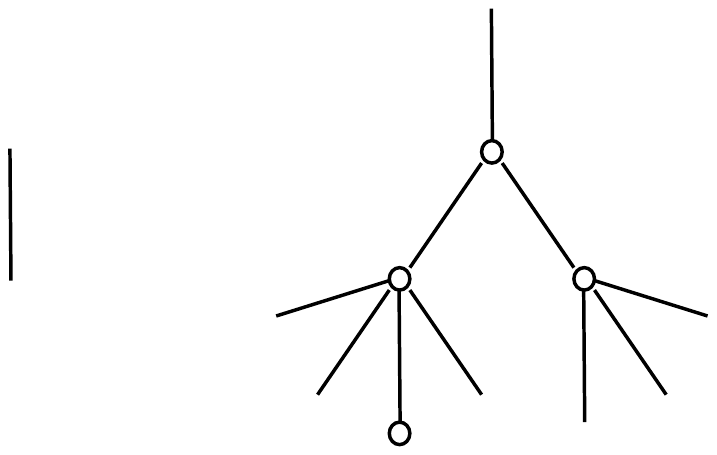}%
\end{picture}
\setlength{\unitlength}{3947sp}%
\begingroup\makeatletter\ifx\SetFigFont\undefined%
\gdef\SetFigFont#1#2#3#4#5{%
  \reset@font\fontsize{#1}{#2pt}%
  \fontfamily{#3}\fontseries{#4}\fontshape{#5}%
  \selectfont}%
\fi\endgroup%
\begin{picture}(5079,1559)(1902,-7227)
\put(4550,-5500){\makebox(0,0)[lb]{\smash{{\SetFigFont{10}{8}{\rmdefault}{\mddefault}{\updefault}{\color[rgb]{0,0,0}$v_1$}%
}}}}
\put(4100,-6200){\makebox(0,0)[lb]{\smash{{\SetFigFont{10}{8}{\rmdefault}{\mddefault}{\updefault}{\color[rgb]{0,0,0}$v_2$}%
}}}}
\put(4720,-6200){\makebox(0,0)[lb]{\smash{{\SetFigFont{10}{8}{\rmdefault}{\mddefault}{\updefault}{\color[rgb]{0,0,0}$v_3$}%
}}}}
\put(3850,-7000){\makebox(0,0)[lb]{\smash{{\SetFigFont{10}{8}{\rmdefault}{\mddefault}{\updefault}{\color[rgb]{0,0,0}$v_4$}%
}}}}
\end{picture}%
\caption{The trivial tree (left) and the tree $T$ (right)}
\label{Tree}
\end{figure} 

Next, we let $X=\{X_n:n\in\mathbb{N}\}$ be a collection of topological spaces. We will now define a collection of spaces $\GTr X(j)$ as follows. Let $\GTr X(0)$ denote the single point $*$. For each $j\in\mathbb{N}$, we let $\GTr X(j)$ denote the set of all triples $(T, \alpha, \beta)$ where $T$ is a tree, $\alpha$ is a function which sends each vertex $v$ of $T$ to an element $x\in X_{|\In(v)|}$ and $\beta$ is a bijectiion from the set of inputs $\In(T)$ to $\{1,2,\cdots ,j\}$. This labelling is best thought of as a permutation of the input edges (originally ordered from left to right). Elements of $\GTr X(j)$ are best thought of as trees with $j$ inputs labelled $1,2,\cdots j$ and with vertices labelled by elements of $X$ according to the rule that the label associated to a vertex $v$ is an element of $X_{|\In(v)|}$. Finally, we specify composition maps:
\begin{equation*}
\begin{split}
\gamma:\GTr X(k)\times \GTr X(j_1)\times \GTr X(j_2)\times\cdots\times \GTr X(j_k)&\longrightarrow \GTr X(j_1+j_2+\cdots j_k),\\
(T, \psi_1, \psi_2,\cdots, \psi_k)\longmapsto \phi
\end{split}
\end{equation*}
where the element $\phi$ is obtained in the following way. Each $\psi_i$ has inputs labelled $1,2,\cdots, j_i$. For each $i=1,2,\cdots ,k$, relabel these inputs by the rule:
\begin{equation*}
(1,2,\cdots, j_i)\longmapsto (j_1+j_2+\cdots j_{i-1}+1, j_1+j_2+\cdots j_{i-1}+2, \cdots, j_1+j_2+\cdots j_{i-1}+j_{i}).
\end{equation*}
Then identify the lone outgoing edge of each newly labelled $\psi_i$ with the $i$-th input of $T$. This results in a grown tree $\phi$ with $j_1+j_2+\cdots +j_k$ labelled inputs. Note that the identity element in this case is of course the trivial tree. In Fig. \ref{GrownTree} we provide an example of this composition with the vertex labels suppressed. In this case $k=3, j_1=2, j_2=2$ and $j_3=1$.
\begin{figure}[!htbp]
\vspace{2.0cm}
\hspace{-3.0cm}
\begin{picture}(0,0)
\includegraphics{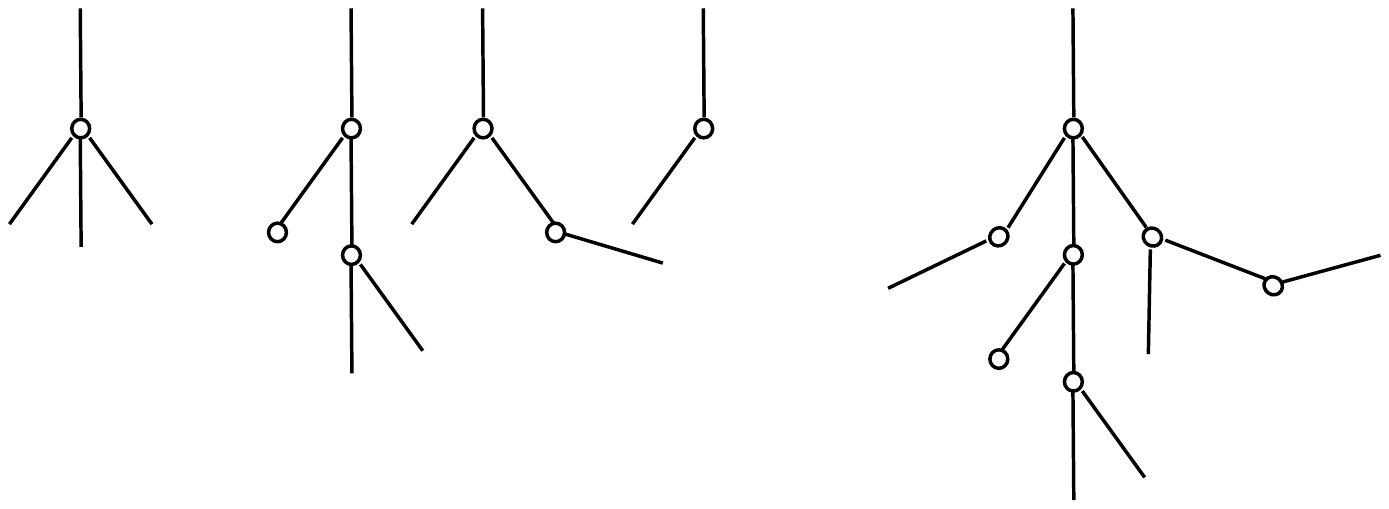}%
\end{picture}
\setlength{\unitlength}{3947sp}%
\begingroup\makeatletter\ifx\SetFigFont\undefined%
\gdef\SetFigFont#1#2#3#4#5{%
  \reset@font\fontsize{#1}{#2pt}%
  \fontfamily{#3}\fontseries{#4}\fontshape{#5}%
  \selectfont}%
\fi\endgroup%
\begin{picture}(5079,1559)(1902,-7227)
\put(2000,-5500){\makebox(0,0)[lb]{\smash{{\SetFigFont{10}{8}{\rmdefault}{\mddefault}{\updefault}{\color[rgb]{0,0,0}$3$}%
}}}}
\put(2300,-5700){\makebox(0,0)[lb]{\smash{{\SetFigFont{10}{8}{\rmdefault}{\mddefault}{\updefault}{\color[rgb]{0,0,0}$1$}%
}}}}
\put(2500,-5500){\makebox(0,0)[lb]{\smash{{\SetFigFont{10}{8}{\rmdefault}{\mddefault}{\updefault}{\color[rgb]{0,0,0}$2$}%
}}}}

\put(3600,-6300){\makebox(0,0)[lb]{\smash{{\SetFigFont{10}{8}{\rmdefault}{\mddefault}{\updefault}{\color[rgb]{0,0,0}$2$}%
}}}}
\put(3800,-6150){\makebox(0,0)[lb]{\smash{{\SetFigFont{10}{8}{\rmdefault}{\mddefault}{\updefault}{\color[rgb]{0,0,0}$1$}%
}}}}

\put(4700,-6000){\makebox(0,0)[lb]{\smash{{\SetFigFont{10}{8}{\rmdefault}{\mddefault}{\updefault}{\color[rgb]{0,0,0}$2$}%
}}}}
\put(3900,-5550){\makebox(0,0)[lb]{\smash{{\SetFigFont{10}{8}{\rmdefault}{\mddefault}{\updefault}{\color[rgb]{0,0,0}$1$}%
}}}}
\put(4970,-5550){\makebox(0,0)[lb]{\smash{{\SetFigFont{10}{8}{\rmdefault}{\mddefault}{\updefault}{\color[rgb]{0,0,0}$1$}%
}}}}

\put(6400,-6100){\makebox(0,0)[lb]{\smash{{\SetFigFont{10}{8}{\rmdefault}{\mddefault}{\updefault}{\color[rgb]{0,0,0}$5$}%
}}}}
\put(7450,-6100){\makebox(0,0)[lb]{\smash{{\SetFigFont{10}{8}{\rmdefault}{\mddefault}{\updefault}{\color[rgb]{0,0,0}$3$}%
}}}}
\put(8250,-6100){\makebox(0,0)[lb]{\smash{{\SetFigFont{10}{8}{\rmdefault}{\mddefault}{\updefault}{\color[rgb]{0,0,0}$4$}%
}}}}
\put(7070,-6900){\makebox(0,0)[lb]{\smash{{\SetFigFont{10}{8}{\rmdefault}{\mddefault}{\updefault}{\color[rgb]{0,0,0}$2$}%
}}}}
\put(7300,-6800){\makebox(0,0)[lb]{\smash{{\SetFigFont{10}{8}{\rmdefault}{\mddefault}{\updefault}{\color[rgb]{0,0,0}$1$}%
}}}}
\end{picture}%
\caption{From left to right, the elements $T, \psi_1, \psi_2, \psi_3$ and $\phi$}
\label{GrownTree}
\end{figure} 
Finally, we obtain the {\em operad of grown trees} $\GTr X$ as the union of the spaces $\GTr X(j)$ for $j\geq 0$, along with the above composition maps. As with the previous example, it is straightforward to verify that the various operad axioms are satisfied. 

\subsection{The operad of trees}\label{optree}
We now make an important modification to the previous example. Recall that for a collection of topological spaces $X=\{ X_n, n\in\mathbb{N} \}$, the space $\GTr X(j)$ consists of triples $(T, \alpha, \beta)$ where $T$ is a tree with $j$ inputs, $\alpha$ is a map which labels each vertex $v$ of $T$ with an element of $X_{|\In(v)|}$ and $\beta$ is a map which labels the inputs of $T$ with the numbers $1,2,\cdots j$. We define the set ${\Tr} X(j)$ to be the set of all quadruples $(T, \alpha, \beta, \kappa)$, where $T$, $\alpha$ and $\beta$ are as before and $\kappa$ is a function which assigns to each edge $e$ of $T$ a number $\kappa (e)$ satisfying:
\begin{enumerate}
\item{}$0\leq \kappa (e) \leq 1$.
\item{}$\kappa (e)=1$ when $e$ is an external edge (i.e. input of output) of $T$.
\end{enumerate}
The number $\kappa(e)$ is called the {\em length} of the edge $e$. Each set $\Tr X(j)$ is then given the obvious function space topology as before. The composition maps defined in the case of spaces $\GTr X(j)$ are easily generalised to work here. In the case of new inputs obtained by composing trees, the edges are assigned a length of $1$. The union of the resulting collection of spaces, along with the composition maps, gives rise to the {\em operad of trees $\Tr X$}. Note that the operad of grown trees $\GTr X$ can be identified with the suboperad of $\Tr X$ consisting of trees with only edges of length $1$. 

\subsection{The bar construction}\label{barcon}
We now consider the case that the collection of spaces $X=\{X_m:m\in\mathbb{N}\}$ introduced above is an operad in its own right, complete with composition maps $\gamma$. Of course we need to add in $X_0$ as the single point space $*$. The example to keep in mind is where $X$ is the collection of spaces $\D (m)$ of $m$-tuples of little disks with base point the disk of radius $1$ in $\D (1)$.  We first consider three relations one may impose on the operad $\Tr X$ above. Note that we will frequently abuse notation and refer to the element $(T, \alpha, \beta, \kappa)$ in $\Tr X(j)$ as the tree $T$.
\begin{enumerate}
\item[(a.)] Suppose $T$ is a tree, with an internal vertex labelled with the identity element $1$ from the space $X_{1}$. Furthermore, suppose this vertex's unique incoming edge has length $t_1$ and its outgoing edge has length $t_2$. Let $T'$ be the tree which is obtained from $T$ by replacing this vertex and its adjacent edges by a single edge of length $t_1*t_2=t_1+t_2-t_1t_2$. This leads to a relation $T \sim T '$ as described in Fig. \ref{rel1}.
\begin{figure}[!htbp]
\vspace{0.0cm}
\hspace{4.0cm}
\begin{picture}(0,0)
\includegraphics{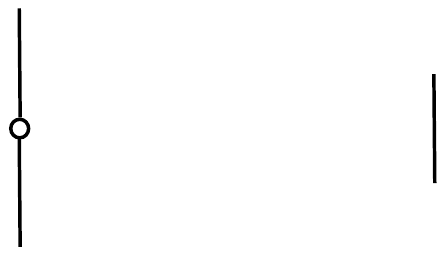}%
\end{picture}
\setlength{\unitlength}{3947sp}%
\begingroup\makeatletter\ifx\SetFigFont\undefined%
\gdef\SetFigFont#1#2#3#4#5{%
  \reset@font\fontsize{#1}{#2pt}%
  \fontfamily{#3}\fontseries{#4}\fontshape{#5}%
  \selectfont}%
\fi\endgroup%
\begin{picture}(5079,1559)(1902,-7227)
\put(1950,-6800){\makebox(0,0)[lb]{\smash{{\SetFigFont{10}{8}{\rmdefault}{\mddefault}{\updefault}{\color[rgb]{0,0,0}$t_1$}%
}}}}
\put(1950,-6200){\makebox(0,0)[lb]{\smash{{\SetFigFont{10}{8}{\rmdefault}{\mddefault}{\updefault}{\color[rgb]{0,0,0}$t_2$}%
}}}}
\put(1960,-6500){\makebox(0,0)[lb]{\smash{{\SetFigFont{10}{8}{\rmdefault}{\mddefault}{\updefault}{\color[rgb]{0,0,0}$1$}%
}}}}
\put(3100,-6500){\makebox(0,0)[lb]{\smash{{\SetFigFont{10}{8}{\rmdefault}{\mddefault}{\updefault}{\color[rgb]{0,0,0}$\sim$}%
}}}}
\put(4200,-6500){\makebox(0,0)[lb]{\smash{{\SetFigFont{10}{8}{\rmdefault}{\mddefault}{\updefault}{\color[rgb]{0,0,0}$t_1*t_2$}%
}}}}
\end{picture}%
\caption{The values $t_1, t_2$ and $t_1*t_2$ are the edge lengths}
\label{rel1}
\end{figure} 

\item[(b.)] Let $v$ be a vertex of a tree $T$ in $\Tr X(j)$ and let $T_v$ be the subtree consisting of $v$, its unique outgoing edge and all directed paths which end in $v$. Suppose $\In(v)=k$, that $v$ is labelled by the element $x\in X_{k}$ and that $x=y.\sigma$ for some element $\sigma\in \Sigma_k$ (recall that $X$ is an operad and so there is an action of the symmetric group). Then the following relation on subtrees induces a relation on trees. 
\begin{figure}[!htbp]
\vspace{1.0cm}
\hspace{4.0cm}
\begin{picture}(0,0)
\includegraphics{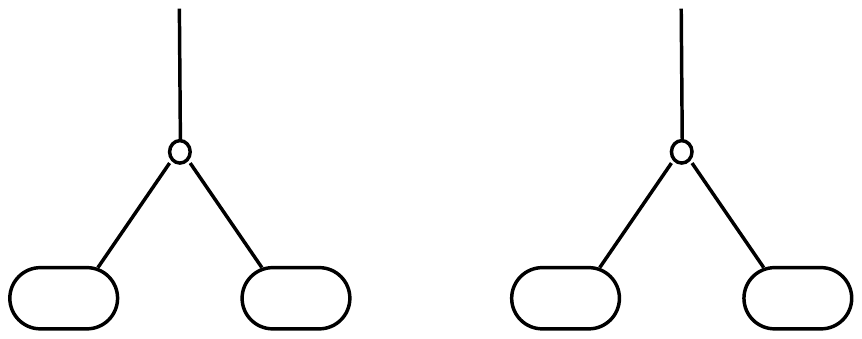}%
\end{picture}
\setlength{\unitlength}{3947sp}%
\begingroup\makeatletter\ifx\SetFigFont\undefined%
\gdef\SetFigFont#1#2#3#4#5{%
  \reset@font\fontsize{#1}{#2pt}%
  \fontfamily{#3}\fontseries{#4}\fontshape{#5}%
  \selectfont}%
\fi\endgroup%
\begin{picture}(5079,1559)(1902,-7227)
\put(2500,-6200){\makebox(0,0)[lb]{\smash{{\SetFigFont{10}{8}{\rmdefault}{\mddefault}{\updefault}{\color[rgb]{0,0,0}$x.\sigma$}%
}}}}
\put(5070,-6200){\makebox(0,0)[lb]{\smash{{\SetFigFont{10}{8}{\rmdefault}{\mddefault}{\updefault}{\color[rgb]{0,0,0}$x$}%
}}}}
\put(4000,-6200){\makebox(0,0)[lb]{\smash{{\SetFigFont{10}{8}{\rmdefault}{\mddefault}{\updefault}{\color[rgb]{0,0,0}$\sim$}%
}}}}
\put(2190,-6900){\makebox(0,0)[lb]{\smash{{\SetFigFont{10}{8}{\rmdefault}{\mddefault}{\updefault}{\color[rgb]{0,0,0}$S_1$}%
}}}}
\put(2790,-6900){\makebox(0,0)[lb]{\smash{{\SetFigFont{10}{8}{\rmdefault}{\mddefault}{\updefault}{\color[rgb]{0,0,0}$...$}%
}}}}
\put(3300,-6900){\makebox(0,0)[lb]{\smash{{\SetFigFont{10}{8}{\rmdefault}{\mddefault}{\updefault}{\color[rgb]{0,0,0}$S_k$}%
}}}}
\put(4500,-6900){\makebox(0,0)[lb]{\smash{{\SetFigFont{10}{8}{\rmdefault}{\mddefault}{\updefault}{\color[rgb]{0,0,0}$S_{\sigma^{-1}1}$}%
}}}}
\put(5200,-6900){\makebox(0,0)[lb]{\smash{{\SetFigFont{10}{8}{\rmdefault}{\mddefault}{\updefault}{\color[rgb]{0,0,0}$...$}%
}}}}
\put(5600,-6900){\makebox(0,0)[lb]{\smash{{\SetFigFont{10}{8}{\rmdefault}{\mddefault}{\updefault}{\color[rgb]{0,0,0}$S_{\sigma^{-1}k}$}%
}}}}
\end{picture}%
\caption{The $S_i$ terms denote subtrees}
\label{rel2}
\end{figure} 
\item[(c.)] If $T$ is a tree with an edge of length $0$ (this must be an internal edge), then we may shrink this edge away and compose the vertices by means of the operad composition. More precisely, suppose $T$ has edge $e$ with starting vertex $v_2$ and ending vertex $v_1$. Thus $e$ is one of potentially many incoming edges for $v_1$ but the unique outgoing edge of $v_2$. Let us say that $|\In(v_1)|=k$. Recall also that $v_1$ is labelled by an element $x \in X_k$ and $v_2$ is labelled by some element $y\in X_l$. The value of $l$ is unimportant. Recall that the inputs of $v_1$ are ordered from left to right and so we assume that $e=e_i$ is the $i$-th edge according to this ordering where $i\in\{1,2,\cdots,k\}$. We replace the edge $e_i$ and its adjacent vertices $v_1$ and $v_2$ with a single vertex labelled by the operad element $x\circ_i y$ which is defined as follows:
\begin{equation*}
x\circ_i y=\gamma(x;1,\cdots,1,y,1,\cdots 1),
\end{equation*}
where the element $y$ appears in the $i$-th position. Note that the resulting element $x\circ_i y$ lies in the space $X_{k+l-1}$. The edge ordering is repaired in the obvious way. The input edges $e_1, \cdots e_{i-1}$ of the removed vertex $v_1$ are unaffected. The incoming edges $e_1',\cdots,e_l'$ of the removed vertex $v_2$ become the incoming edges $e_{i}, \cdots e_{i+l-1}$ for the new vertex. Finally the edges $e_{i+1}, \cdots ,e_{k}$ become the incoming edges $e_{i+l}, \cdots, e_{k+l-1}$ for the new vertex. To aid the reader we provide an example in Fig. \ref{rel3}. Note that in this example $0$ represents the edge length, while the numbers $1,2,3$ and $4$ show the edge ordering.

\begin{figure}[!htbp]
\vspace{2.0cm}
\hspace{4.0cm}
\begin{picture}(0,0)
\includegraphics{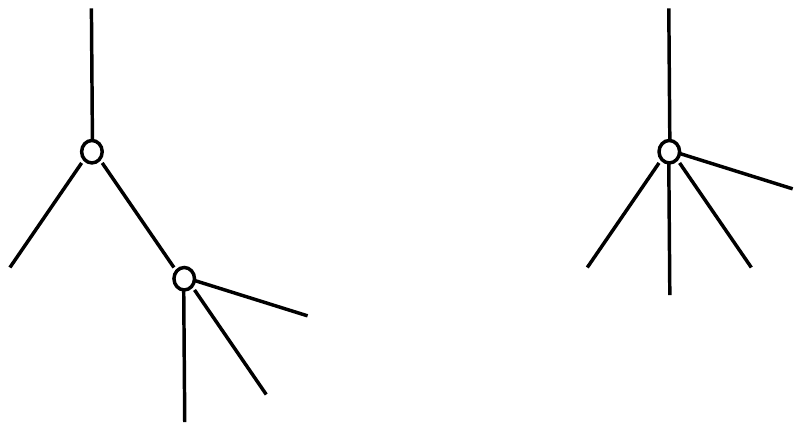}%
\end{picture}
\setlength{\unitlength}{3947sp}%
\begingroup\makeatletter\ifx\SetFigFont\undefined%
\gdef\SetFigFont#1#2#3#4#5{%
  \reset@font\fontsize{#1}{#2pt}%
  \fontfamily{#3}\fontseries{#4}\fontshape{#5}%
  \selectfont}%
\fi\endgroup%
\begin{picture}(5079,1559)(1902,-7227)
\put(2670,-6200){\makebox(0,0)[lb]{\smash{{\SetFigFont{10}{8}{\rmdefault}{\mddefault}{\updefault}{\color[rgb]{0,0,0}$0$}%
}}}}
\put(2600,-5800){\makebox(0,0)[lb]{\smash{{\SetFigFont{10}{8}{\rmdefault}{\mddefault}{\updefault}{\color[rgb]{0,0,0}$x$}%
}}}}
\put(3050,-6300){\makebox(0,0)[lb]{\smash{{\SetFigFont{10}{8}{\rmdefault}{\mddefault}{\updefault}{\color[rgb]{0,0,0}$y$}%
}}}}
\put(2100,-6500){\makebox(0,0)[lb]{\smash{{\SetFigFont{10}{8}{\rmdefault}{\mddefault}{\updefault}{\color[rgb]{0,0,0}$1$}%
}}}}
\put(3750,-5800){\makebox(0,0)[lb]{\smash{{\SetFigFont{10}{8}{\rmdefault}{\mddefault}{\updefault}{\color[rgb]{0,0,0}$\sim$}%
}}}}
\put(4800,-5800){\makebox(0,0)[lb]{\smash{{\SetFigFont{10}{8}{\rmdefault}{\mddefault}{\updefault}{\color[rgb]{0,0,0}$x\circ_{i}y$}%
}}}}
\put(2950,-7200){\makebox(0,0)[lb]{\smash{{\SetFigFont{10}{8}{\rmdefault}{\mddefault}{\updefault}{\color[rgb]{0,0,0}$1$}%
}}}}
\put(3600,-6600){\makebox(0,0)[lb]{\smash{{\SetFigFont{10}{8}{\rmdefault}{\mddefault}{\updefault}{\color[rgb]{0,0,0}$3$}%
}}}}
\put(3350,-7050){\makebox(0,0)[lb]{\smash{{\SetFigFont{10}{8}{\rmdefault}{\mddefault}{\updefault}{\color[rgb]{0,0,0}$2$}%
}}}}
\put(4800,-6400){\makebox(0,0)[lb]{\smash{{\SetFigFont{10}{8}{\rmdefault}{\mddefault}{\updefault}{\color[rgb]{0,0,0}$1$}%
}}}}
\put(5270,-6600){\makebox(0,0)[lb]{\smash{{\SetFigFont{10}{8}{\rmdefault}{\mddefault}{\updefault}{\color[rgb]{0,0,0}$2$}%
}}}}
\put(5700,-6450){\makebox(0,0)[lb]{\smash{{\SetFigFont{10}{8}{\rmdefault}{\mddefault}{\updefault}{\color[rgb]{0,0,0}$3$}%
}}}}
\put(5950,-6000){\makebox(0,0)[lb]{\smash{{\SetFigFont{10}{8}{\rmdefault}{\mddefault}{\updefault}{\color[rgb]{0,0,0}$4$}%
}}}}
\end{picture}%
\caption{Shrinking an edge of length zero away by composing its vertices}
\label{rel3}
\end{figure} 
\end{enumerate}
We now state a theorem which follows directly from Theorem 2.20 of \cite{Stash}.
\begin{Theorem}
For each operad $\P$, there is an operad $W\P$ defined as follows:
\begin{equation*}
W\P=\Tr \P / \text{relations \{(a.), (b.), (c.)\}}.
\end{equation*}
\end{Theorem}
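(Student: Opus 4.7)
The plan is to verify that the equivalence relation $\sim$ generated by (a), (b), (c) on $\Tr\P$ is a \emph{congruence} with respect to the operad structure on $\Tr\P$. Once this is done, the operad structure on $\Tr\P$ (composition maps $\gamma$, symmetric group actions, and the unit element, all of which are essentially tautological for trees) descends to the quotient, and the operad axioms for $W\P$ are then inherited from those of $\Tr\P$.

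First I would fix $\sim$ as the smallest equivalence relation on each $\Tr\P(j)$ that contains every instance of (a), (b), (c) applied at any interior site of any tree. The key observation is that each of the three relations is \emph{local}: relation (a) modifies a single bivalent vertex labelled by $1 \in \P(1)$ and its two adjacent edges; relation (b) permutes the incoming subtrees at a single vertex while twisting its label by $\sigma$; relation (c) contracts a single length-zero edge and composes the labels of its two endpoints via $\gamma$ in $\P$. Grafting of trees via the composition maps in $\Tr\P$ only affects external edges (inputs and output) and never alters interior configurations. Thus if $T \sim T'$ witnessed by one of these local moves, then $\gamma(S; T_1, \ldots, T, \ldots, T_k) \sim \gamma(S; T_1, \ldots, T', \ldots, T_k)$ and $\gamma(T; S_1, \ldots, S_k) \sim \gamma(T'; S_1, \ldots, S_k)$. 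The $\Sigma_j$-action only relabels inputs, so it is also compatible with each of (a), (b), (c).

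Next, I would check the unit: the trivial tree represents $1 \in W\P(1)$. Composing on either side with trivial trees in $\Tr\P$ introduces length-$1$ edges, and one sees directly from the grafting rule (which assigns length $1$ to each newly created input edge) that the unit laws hold already in $\Tr\P$, hence in $W\P$. Associativity and equivariance for $W\P$ are then immediate once equivalence is a congruence, since both hold at the level of trees before quotienting.

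The main obstacle is checking consistency of (c) with composition: when several length-zero edges are contracted, the resulting vertex label must be well-defined regardless of the order of contractions. This is exactly the content of the operad associativity and equivariance axioms for $\P$ applied to iterated $\circ_i$'s; the equivariance axiom (together with relation (b)) is also what ensures that (c) is well-defined when edge orderings are shuffled. A small parallel verification is needed for (a): the binary operation $t_1 * t_2 = t_1 + t_2 - t_1 t_2$ used to concatenate edge lengths is associative (since $1 - (s*t)*u = (1-s)(1-t)(1-u)$) and has $0$ as identity, so iterated applications of (a) at a chain of identity-labelled bivalent vertices yield a single edge of unambiguous length. Once these compatibilities are verified, the quotient $\Tr\P/\sim$ acquires an operad structure, which is the desired $W\P$; the theorem then follows, in agreement with Theorem 2.20 of \cite{Stash}.
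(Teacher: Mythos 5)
The paper does not prove this theorem itself; it simply states that it ``follows directly from Theorem 2.20 of \cite{Stash}'' and gives no argument. There is therefore nothing internal to compare against, and your direct verification is essentially the standard argument one would unpack from that reference. Your key observations are correct: the three relations are local moves on a single bivalent vertex, a single vertex with its incoming flags, or a single length-zero internal edge, and grafting only identifies external edges, so the relations are respected by the composition maps and the symmetric group actions; the unit laws already hold strictly in $\Tr\P$ because grafting with the trivial tree adds no interior structure; coherence of iterated $\circ_i$-contractions in relation (c) is exactly operad associativity and equivariance in $\P$; and $t_1 * t_2 = t_1 + t_2 - t_1 t_2$ is associative with neutral element $0$ (via $1 - s*t = (1-s)(1-t)$). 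One small point worth adding to your check of relation (a): the element $1$ is absorbing for $*$ (that is, $1*t = 1$), which is what guarantees that if the bivalent vertex being collapsed is adjacent to an external edge of length $1$, the resulting merged edge still has length $1$, as required of external edges. With that noted, your sketch is a correct, self-contained reconstruction of a proof the paper delegates to \cite{Stash}.
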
 
\noindent The process of constructing $W\P$ from $\P$ is known as the {\em bar construction} or {\em {$W$}-construction} for operads and is due to Boardman and Vogt.
We will now state some important results concerning the relationship between $\P$ and $W\P$.

\subsection{Operad actions}\label{operadaction}
Let $Z$ is a topological space and let $\P$ be an operad. We describe $Z$ as a {\em $\P$-space} if for each integer $k\geq 0$ there are actions
\begin{equation*}
\begin{split}
\theta:\P(k)\times Z^{k}\longrightarrow Z,
\end{split}
\end{equation*}
so that the following conditions hold:
\begin{enumerate}
\item{}$\theta(c.{\sigma}, (z_1, z_2, \cdots z_k))=\theta(c, (z_{\sigma^{-1}(1)},\cdots, z_{\sigma^{-1}(k)}))$ for all $\sigma\in\Sigma_k$, $c\in \P(k)$ and $(z_1,\cdots ,z_k)\in Z^{k}$.
\item{} The following diagram commutes:

\begin{tikzpicture}[description/.style={fill=white,inner sep=2pt}] 
\matrix (m) [matrix of math nodes, row sep=3em, 
column sep=2.5em, text height=1.5ex, text depth=0.25ex] 
{  \P(k)\times \P(j_1)\times \P(j_k)\times Z^{j}& & \P(j)\times Z^{j} & & Z \\ 
\P({k})\times \P({j_1})\times Z^{j_1}\times \cdots \P({j_k})\times Z^{j_k} &  & \P({k})\times Z^{k}  & & Z \\ }; 
\path[->,font=\scriptsize] 
(m-1-3) edge node[auto] {$ \theta$} (m-1-5) 
(m-1-5) edge node[auto] {$Id $} (m-2-5) 
(m-2-1) edge node[auto] {$ Id \times \theta^{k} $} (m-2-3) 
(m-1-1) edge node[auto] {$ shuffle $} (m-2-1)
(m-1-1) edge node[auto]  {$\gamma\times Id$}(m-1-3) 
(m-2-3) edge node[auto]  {$\theta$}(m-2-5); 
\end{tikzpicture} 
\end{enumerate}
We now state a theoem due to Boardman and Vogt which concerns the bar construction from the previous section. 
\begin{Theorem}\label{BVthm} {\rm (Theorem 4.37 \cite{BV})}
A topological space $Z$ is a $\P$-space, for some operad $\P$, if and only it is a $W\P$-space.
\end{Theorem}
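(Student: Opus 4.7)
The plan is to construct an operad morphism $\pi: W\P \to \P$ and show that pulling actions back along $\pi$ gives the desired equivalence between $\P$-structures and $W\P$-structures on $Z$.

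First I would define $\pi$ by sending each equivalence class of labelled trees $(T, \alpha, \beta, \kappa) \in W\P(k)$ to the element of $\P(k)$ obtained by setting all edge lengths to zero and then iteratively applying relation (c), which uses the composition $\gamma$ of $\P$ at each internal edge to absorb it. Because (c) is precisely this compatibility, $\pi$ is well defined on $W\P$; relations (a) and (b) are respected because the identity $1 \in \P(1)$ and the symmetric group action on $\P$ satisfy the operad axioms for $\P$ itself. A straightforward check shows $\pi$ is a morphism of operads, preserving identity, fitting/composition, and the $\Sigma_k$-action.

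The forward direction (any $\P$-space is a $W\P$-space) is then immediate: given an action $\theta: \P(k) \times Z^k \to Z$, define $\theta_W := \theta \circ (\pi \times \mathrm{id}_{Z^k})$. The equivariance and the operad compatibility diagram of section \ref{operadaction} for $\theta_W$ follow directly from those for $\theta$ together with the fact that $\pi$ is an operad morphism.

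For the reverse direction I would use the section $\iota: \P \to W\P$ that sends each $p \in \P(k)$ to the \emph{corolla}: the tree with one internal vertex labelled $p$, with $k$ input edges labelled $1, \ldots, k$ and one output, all external edges of length $1$. Restricting a given $W\P$-action $\theta_W$ along $\iota$ gives a candidate $\P$-action $\theta_\P$, and equivariance follows immediately from relation (b). The hard part will be checking operadic associativity: the $W\P$-composition of corollas is a tree with length-$1$ internal edges, which is \emph{not} the same element of $W\P$ as the corolla of the corresponding $\P$-composite. The natural remedy is the canonical path in $W\P$ obtained by shrinking those internal lengths continuously from $1$ to $0$; relation (c) identifies the endpoint at $0$ with the corolla of the $\gamma_\P$-composite, so the two candidate multiplications on $Z$ agree up to a coherent homotopy built from $\theta_W$ evaluated along this path. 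The main obstacle, as I see it, is packaging these homotopies with the correct coherence across all iterated compositions; this is precisely the content of the bar construction and is why Boardman and Vogt formulate the equivalence in terms of $W\P$, where the length parameter records these homotopies explicitly. Once the coherence is in place, the pullback-along-$\pi$ and restriction-along-$\iota$ constructions are mutually inverse up to the homotopies supplied by $W\P$, which is the content of the theorem.
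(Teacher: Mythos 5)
The paper does not prove this result; it is quoted verbatim from Boardman and Vogt with a citation to Theorem 4.37, so there is no internal proof to compare against. Your forward direction is sound: collapsing all edge lengths to $0$ and absorbing internal edges via relation (c) defines an augmentation $\pi:W\P\to\P$, this is a morphism of operads (relations (a) and (b) reduce to the operad axioms for $\P$), and any $\P$-action pulls back along it.

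The reverse direction, however, has a genuine gap. You correctly observe that the corolla inclusion $\iota:\P\to W\P$ is not an operad morphism, since composition of corollas yields trees with length-$1$ internal edges rather than the corolla of the $\gamma_{\P}$-composite, and you propose to repair this by evaluating $\theta_W$ along the edge-shrinking paths. But that argument only shows that $\theta_{\P}:=\theta_W\circ(\iota\times\mathrm{id})$ satisfies the composition square of section \ref{operadaction} \emph{up to homotopy}, which is not the same as $Z$ being a $\P$-space: the definition requires that square to commute on the nose. Taking $\P$ to be the associative operad makes the gap concrete: a $W\P$-space is an $A_{\infty}$-space, and restriction along $\iota$ just returns the binary multiplication, which in general is not strictly associative. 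The actual content of Boardman and Vogt's theorem --- and the way the paper invokes it, note the phrasing ``has the homotopy type of a $\D_n$-space'' in Corollary \ref{actioncor} --- is that a $W\P$-space is \emph{homotopy equivalent} to a genuine $\P$-space. Producing that equivalent strict $\P$-algebra from a given $W\P$-action is the nontrivial step and requires a rectification device (such as a two-sided bar construction, or the explicit interval-object deformation argument in Boardman--Vogt), not merely restriction to corollas together with an appeal to ``coherent homotopy.''
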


A little later we will show that, for $n\geq 3$, the space of metrics of positive scalar curvature on the sphere $S^{n}$ is homotopy equivalent to a $W\D_{n}$-space where $\D_{n}$ is the operad of little $n$-dimensional disks. The above theorem allows us to conclude that this space is also homotopy equivalent to a $\D_{n}$-space. In section \ref{Dspace} below, we will demonstrate that, for $n\geq3$, the space $\Riem^{+}(S^{n})$ is homotopy equivalent to a $W\D_n$ and consequently a $\D_n$ space. The importance of this stems from the following theorem due to Boardman, Vogt and May. This is a case of Theorem 13.1 from \cite{May}.
\begin{Theorem} \label{BVM}{\rm (Boardman, Vogt and May) \cite{May}}
If a ${\D}_n$-space $Z$ is group-like (i.e. $\pi_0(Z)$ is a group under the induced multplication), then it is weakly homotopy equivalent to an $n$-fold loop space.
\end{Theorem}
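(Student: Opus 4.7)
The statement is a classical theorem of Boardman, Vogt and May, so I would simply invoke the standard delooping machine from \cite{May}; here I sketch the plan one follows. The first step is to reinterpret the $\D_n$-action as an algebra over the monad $C_n$ associated to $\D_n$, where
\begin{equation*}
C_n X = \bigsqcup_{k \geq 0} \D_n(k) \times_{\Sigma_k} X^{k}
\end{equation*}
modulo the basepoint identifications. A $\D_n$-space structure on $Z$ is then nothing but an algebra structure map $\theta \colon C_n Z \to Z$ satisfying the usual unit and associativity axioms.

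Next, I would build the candidate delooping. Viewing the suspension $\Sigma$ as a right $C_n$-functor via the standard pinching maps arising from the little disks, form the two-sided simplicial bar construction $B_\bullet(\Sigma^n, C_n, Z)$ and set $B^n Z := |B_\bullet(\Sigma^n, C_n, Z)|$. From $\theta$ and the unit of the loop-suspension adjunction one assembles a natural comparison map
\begin{equation*}
\alpha_Z \colon Z \longrightarrow \Omega^n B^n Z.
\end{equation*}
The technical heart of the argument is May's approximation theorem: for a well-pointed, path-connected based space $X$, the natural map $C_n X \to \Omega^n \Sigma^n X$ is a weak equivalence. Applied level-wise in the simplicial bar construction, and combined with the homotopy invariance of geometric realisation for Reedy-cofibrant simplicial spaces, it follows that $\alpha_Z$ is a weak equivalence whenever $Z$ is path-connected.

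The main obstacle, and the sole reason for the group-like hypothesis, is extending this conclusion to disconnected $Z$. In general $\alpha_Z$ only induces a group completion on $\pi_0$ and a localisation on homology inverting the translation action of the monoid $\pi_0(Z)$; this is the content of the group completion theorem, whose proof dominates the technical work. When $\pi_0(Z)$ is already a group, the completion is the identity and the homology localisation is vacuous, so $\alpha_Z$ becomes a genuine weak equivalence on each component. Under that hypothesis $Z \simeq \Omega^n B^n Z$, exhibiting $Z$ as an $n$-fold loop space as required.
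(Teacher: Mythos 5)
The paper does not actually prove this statement; it simply cites it as a case of Theorem~13.1 of \cite{May} (together with \cite{BV}) and uses it as a black box. Your sketch is a faithful high-level recounting of the standard May recognition-principle argument that sits behind that citation: passing from the $\D_n$-operad action to a $C_n$-algebra, delooping via the two-sided bar construction $B(\Sigma^n,C_n,Z)$, invoking the approximation theorem $C_nX\simeq\Omega^n\Sigma^nX$ levelwise, and observing that the group-like hypothesis makes the resulting comparison map a weak equivalence rather than merely a group completion. Two small caveats worth tidying: it is $\Sigma^n$, not $\Sigma$, that is made into a $C_n$-functor by the little-disks pinch maps; and May's original 1972 argument does not literally appeal to the later McDuff--Segal group completion theorem (the grouplike case is handled directly there), though framing the role of the hypothesis via group completion is a perfectly reasonable modern gloss. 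Since the paper offers no proof of its own to compare against, there is nothing to reconcile; your sketch correctly identifies the mechanism the paper is relying on.
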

\noindent Thus, to show that $\Riem^{+}(S^{n})$ is an $n$-fold loop space when $n\geq 3$, it is enough to show that $\pi_0(\Riem^{+}(S^{n}))$ is a group under the appropriate multiplication. We will return to this problem in the final section, section \ref{grouplike}, where we will demonstrate that $\Riem^{+}(S^{n})$ is indeed group-like in the case when $n=3$ or $n\geq 5$. The case of $n=4$ is an open problem.

\section{Showing that $\Riem^{+}(S^{n})$ is homotopy equivalent to a ${\D}_{n}$-space}\label{Dspace}
We now return to the sphere $S^{n}$ which, as we discussed earlier, is modelled on the standard unit sphere in $\mathbb{R}^{n+1}$. Once again, we assume that $n\geq 3$. We denote by $p_0$, the north pole $(0,0,\cdots,0,1)\in S^{n}\subset\mathbb{R}^{n+1}$. Recall that immediately preceding \ref{fixheadvary}, we defined the space $\Riem^{+}_{{\rm{head}}(p_0, 1)}(S^{n})$ consisting of a psc-metrics which take the form of a bulb-head with head radius $1$ (but arbitrary head angle $r\in(0,\frac{\pi}{2}]$) on some neighborhood $D_{p_0}$. For our purposes, we choose $D_{p_0}$ to a be a geodesic ball $B_{1}(p_0,\frac{\pi}{2}+\epsilon)$ for some small $\epsilon\in(0,\frac{\pi}{2})$. The value of $\epsilon$ is not important. For each metric $g\in \Riem^{+}_{{\rm{head}}(p_0, 1)}(S^{n})$, the restriction of $g$ to the closed northern hemisphere ${D_{+}}$ is now precisely the round hemisphere of radius $1$. To simplify the notation, henceforth we write:
\begin{equation*}
\Riem^{+}_{{\rm{D_{+}}}(1)}(S^{n})=\Riem^{+}_{{\rm{head}}(p_0, 1)}(S^{n}).
\end{equation*}
In Lemma \ref{bulbdefret}, we showed that, when $n=3$, this space is homotopy equivalent to the space of all psc-metrics on $S^{n}$, $\Riem^{+}(S^{n})$. Thus, in order to demonstrate that $\Riem^{+}(S^{n})$ has the homotopy type of a $W{\D}_{n}$-space (and consequently a ${\D}_{n}$-space), it will be sufficient to show this for the space $\Riem^{+}_{{\rm{D_{+}}}(1)}(S^{n})$.

\subsection{The action of $W\D_n$ on $\Riem^{+}_{{\rm{D_{+}}}(1)}(S^{n})$}\label{action}
We begin by defining a map from ${W\D_n}$ to $\Riem^{+}_{{\rm{D_{+}}}(1)}(S^{n})$. Essentially, psc-metrics in the image of this map will be analogues of the elements of $W\D_n$, which we will use to define the action. In order to define this map, we begin by specifying some rules for associating elements of $\Riem^{+}_{{\rm{D_{+}}}(1)}(S^{n})$ to certain building blocks of $W\D_n$. To ease notation we will once more suppres the $n$, writing $\D_n(j)$ as simply $\D(j)$. 

\begin{enumerate}
\item{}{\bf The trivial tree.} We assign the trivial tree to the standard round metric $ds_{n}^{2}$ in $\Riem^{+}_{{\rm{D_{+}}}(1)}(S^{n})$. 

\item{} {\bf A tree with a single vertex.} 
Suppose we have a tree consisting of a single vertex, labelled by the element $c\in\D(j)$. All edges must have length $1$. We associate to this tree precisely the element $g_c=g_c(1)$ obtained by Lemma \ref{psclensoperadlemma} with respect to the southern hemisphere $D_{-}$.

\item{}{\bf A tree with all edges of length $1$.} We start by associating the root vertex to a psc-metric exactly as in the previous case. This results in a psc-metric with bulb-heads of radius $1$ for each of this vertices input edges. On each of these heads we repeat the previous step. We continue on in this way for all other vertices; see Fig. \ref{bulbtree} for an example.

\begin{figure}[!htbp]
\vspace{3.0cm}
\hspace{2.0cm}
\begin{picture}(0,0)
\includegraphics{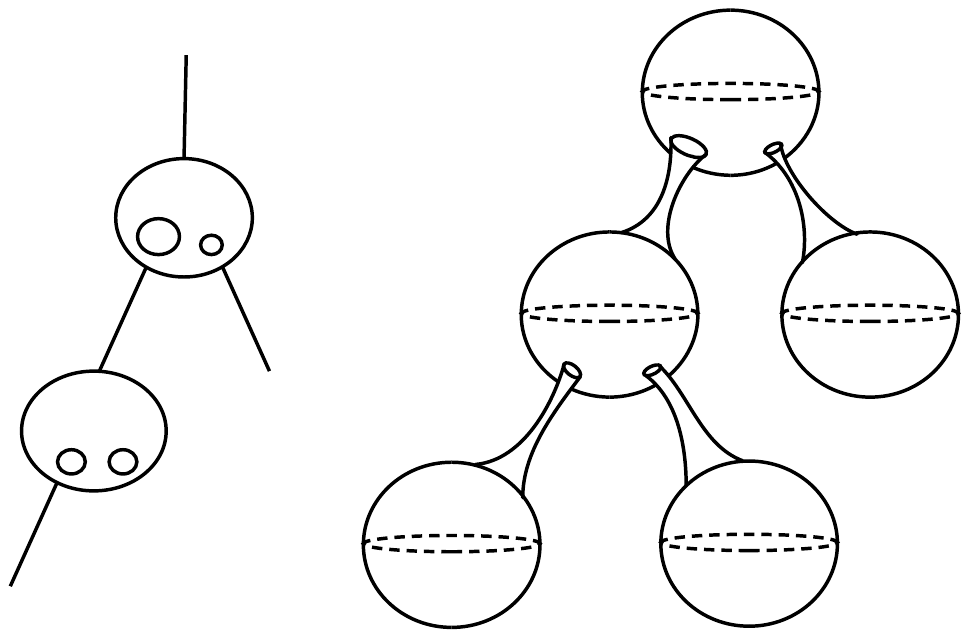}%
\end{picture}
\setlength{\unitlength}{3947sp}%
\begingroup\makeatletter\ifx\SetFigFont\undefined%
\gdef\SetFigFont#1#2#3#4#5{%
  \reset@font\fontsize{#1}{#2pt}%
  \fontfamily{#3}\fontseries{#4}\fontshape{#5}%
  \selectfont}%
\fi\endgroup%
\begin{picture}(5079,1559)(1902,-7227)
\end{picture}%
\caption{An element of $W\D$ with two vertices and all edges of length $1$ (left) and the corresponding psc-metric (right)}
\label{bulbtree}
\end{figure} 

\item{}{\bf General trees.} We must now consider what happens to the psc-metric above if one alters the internal edge lengths. As it stands each edge has length $1$ and corresponds to a bulb which has been pushed out by Lemma \ref{psclensoperadlemma}. Consider for a moment the $i^{th}$-input edge (currently of length $1$) of a vertex $v$ with label $c\in \D(j)$. Recall that  the corresponding $i^{th}$ bulb was attained at the $t_i=1$ stage of a psc-isotopy $g_{c}(1,\cdots, t_i, \cdots ,1), t_i\in I$. If we now replace the edge length of $1$ with some other length $t_i$, we need to perform a corresponding replacement of the metric 
$g_{c}(1,\cdots, t_i=1, \cdots ,1)$.  One might assume that the metric $g_{c}(1,\cdots, t_i, \cdots ,1)$ is the obvious replacement and, usually, this is precisely what we do. Unfortunately, to properly satisfy relation (a) of the bar construction in \ref{barcon}, there is a case where we must make a tiny adjustment to this association. To deal with this problem we specify a weighting function $\omega:I\rightarrow I$; see below. Then, instead of replacing $g_{c}(1,\cdots, t_i=1, \cdots ,1)$ with $g_{c}(1,\cdots, t_i, \cdots ,1)$, we replace it with $g_{c}(1,\cdots, \omega(t_i), \cdots ,1)$. Of course, this replacement may have the effect of reducing the hemisphere of radius $1$ on which operad composition takes place to some general $(\lambda,r)$-lens. This is not a problem, given that we have a canonical way of reproducing operad elements on this lens and modifying the construction accordingly, via Lemma \ref{psclensoperadlemma}. The weighting function $\omega$ satisfies the following properties.

\noindent {\bf The weighting function $\omega$ in the regular case.} For edges of length $t\in I$, whose non-empty adjacent vertices are labelled by elements of the little disk operad whose little disks all have radius $\leq\frac{1}{2}$, we set $\omega(t)=t$. Note that an edge with an adjacent vertex is an external edge.

\noindent {\bf The weighting function $\omega$ in the special case.} We consider paths of the following type on a tree $T\in W\D$. All vertices in the path are labelled by an element of the little disks operad with a little disk of radius $\geq \frac{3}{4}$, with the exception of the end vertices. Moreover the end vertices may be empty. By including the possibility of an empty vertex, we allow for paths which include external edges. Suppose the edges of this path have lengths $s_1, s_2, \cdots, s_k$, in order of the path direction. Such a situation is illustrated below in Fig. \ref{pathrelation}, where we draw the path from left to right.
\begin{figure}[!htbp]
\vspace{2.0cm}
\hspace{1.0cm}
\begin{picture}(0,0)
\includegraphics{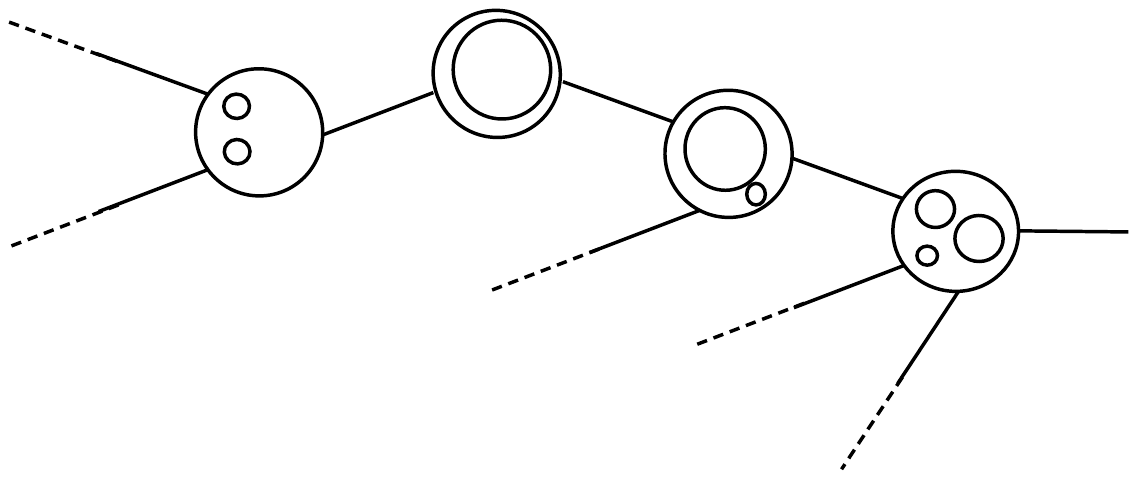}%
\end{picture}
\setlength{\unitlength}{3947sp}%
\begingroup\makeatletter\ifx\SetFigFont\undefined%
\gdef\SetFigFont#1#2#3#4#5{%
  \reset@font\fontsize{#1}{#2pt}%
  \fontfamily{#3}\fontseries{#4}\fontshape{#5}%
  \selectfont}%
\fi\endgroup%
\begin{picture}(5079,1559)(1902,-7227)

\put(3700,-5500){\makebox(0,0)[lb]{\smash{{\SetFigFont{10}{8}{\rmdefault}{\mddefault}{\updefault}{\color[rgb]{0,0,0}$s_1$}%
}}}}
\put(4800,-5300){\makebox(0,0)[lb]{\smash{{\SetFigFont{10}{8}{\rmdefault}{\mddefault}{\updefault}{\color[rgb]{0,0,0}$s_2$}%
}}}}
\put(5950,-5700){\makebox(0,0)[lb]{\smash{{\SetFigFont{10}{8}{\rmdefault}{\mddefault}{\updefault}{\color[rgb]{0,0,0}$s_3$}%
}}}}
\end{picture}%
\caption{A path in $T$ whose internal vertices are of the type described in the special case}
\label{pathrelation}
\end{figure} 
We now define $\omega$ on the edge lengths along this path by the following recursive formula:
\begin{equation}\label{omega}
\omega(s_1)=s_1, \hspace{1cm} \omega(s_i+1)=s_{i+1}+\omega(s_i)-s_{i+1}\omega(s_i).
\end{equation}
\end{enumerate}
We denote by $P$, the map
\begin{equation}\label{proxy}
P:W\D_{n}\longrightarrow \Riem^{+}_{{\rm{D_{+}}}(1)}(S^{n}),
\end{equation}
which sends an element $T\in W\D_{n}$ to the psc-metric $g_{T}$ defined by the construction above.
\begin{Lemma}
For $n\geq 3$, the above process gives rise to a well-defined map $P:W\D_{n}\rightarrow \Riem^{+}_{{\rm{D_{+}}}(1)}(S^{n})$ 
\end{Lemma}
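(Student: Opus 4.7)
The plan is to check that the construction descends from $\Tr\D_n$ to the quotient $W\D_n = \Tr\D_n / \{(a),(b),(c)\}$ and that the resulting map is continuous. There are therefore three compatibility conditions to verify, one per relation. Continuity will follow from the parametric continuity of Lemma \ref{psclensoperadlemma} together with the piecewise continuity of $\omega$ in its edge-length arguments.

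The easiest of the three is relation (b), the symmetric-group equivariance. A vertex labelled by $x.\sigma \in \D(k)$ produces the same unlabelled geometric configuration of little disks on the hemisphere as the same vertex labelled by $x$; only the indexing is permuted by $\sigma$. Since the construction grafts the subtree $S_i$ onto the $i$-th labelled bulb-head, attaching $(S_1,\ldots,S_k)$ to the $x.\sigma$-vertex and attaching $(S_{\sigma^{-1}(1)},\ldots,S_{\sigma^{-1}(k)})$ to the $x$-vertex produce identical psc-metrics by inspection.

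Relation (c), the collapse of a length-$0$ edge, is the heart of the compatibility between our construction and the operad composition in $\D_n$. If a parent vertex labelled by $x\in\D(k)$ has its $i$-th incoming edge of length $0$ to a child vertex labelled by $y\in\D(l)$, then by parts (1)--(5) of Lemma \ref{psclensoperadlemma} the push-out parameter at $p_i$ is $0$, and the metric is still the unperturbed round lens on $B_\lambda(p_i,r_i)$. Applying the $y$-construction on this lens, via the canonical identification of $B_\lambda(p_i,r_i)$ with the standard hemisphere and a further application of Lemma \ref{psclensoperadlemma}, yields precisely the same psc-metric as applying Lemma \ref{psclensoperadlemma} directly to the operad composite $x\circ_i y\in\D(k+l-1)$ on the southern hemisphere. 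This is the geometric \textquotedblleft fitting\textquotedblright\ property already built into the definition of $\D_n$.

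The main obstacle is relation (a), which is the whole reason the weighting function $\omega$ was introduced. An identity vertex $v\in\D(1)$ has its sole little disk equal to all of $D^n$, so by property (5) of Lemma \ref{bulblemma} the family $g_1(t)$ is constant at the round metric; the vertex $v$ contributes no geometric change. Two consecutive edges of lengths $t_1,t_2$ through $v$ should therefore be interchangeable with a single edge of length $t_1*t_2=t_1+t_2-t_1t_2$. The key observation is that the recursion in (\ref{omega}), which can be rewritten as $\omega(s_{i+1})=s_{i+1}*\omega(s_i)$, accumulates edge lengths along a path of big-disk (identity-like) vertices associatively via $*$, so the value of $\omega$ at any remaining edge after collapsing $v$ is unchanged. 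Consequently the nested bulb geometries produced on the two sides of relation (a) coincide edge-by-edge, and the two psc-metrics are equal. Once this algebraic identity is checked, descent of $P$ to $W\D_n$ is established, and the continuity of $P$ follows from the continuity of the bulb construction in Lemma \ref{psclensoperadlemma} in all its parameters together with the piecewise continuity of $\omega$.
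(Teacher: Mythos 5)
Your proof follows the same structure as the paper's: verify that the construction respects the three bar-construction relations and then invoke continuity of the bulb construction in its parameters. Your treatments of (b) and (c) agree with the paper's in both conclusion and content (the paper likewise attributes (c) to the rewind-to-lens property of Lemma \ref{psclensoperadlemma} and regards (b) as immediate combinatorially). For (a) you add a useful clarification that the paper omits, namely that the recursion for $\omega$ is $\omega(s_{i+1}) = s_{i+1} * \omega(s_i)$ and hence accumulates edge lengths associatively under $*$, which is exactly why the collapse of an identity vertex is compatible with the weighting.

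One caveat worth flagging: your assertion that ``the vertex $v$ contributes no geometric change'' because the push-out family ``$g_1(t)$ is constant at the round metric'' is only literally true when the lens carrying the identity vertex is a full round hemisphere of radius $1$ (the edge-length-$1$ case, where condition (5) of Lemma \ref{bulblemma} applies). For intermediate edge lengths the ambient region is a proper $(\lambda,r)$-lens, and pushing out a full-angle head-radius-$1$ bulb on such a lens \emph{does} alter the metric. The reason (a) nevertheless holds is precisely that the nested partial push-outs along a run of identity (or near-identity) vertices compose according to $*$, and $\omega$ is defined so as to track this; the constancy of the family is not the mechanism. Since the paper's own proof simply asserts that $\omega$ handles relation (a) without elaborating, this imprecision does not place your argument below the paper's level of rigor, and the overall approach is the same.
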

\begin{proof} This involves checking that the relations (a.), (b.) and (c.) of the bar construction in section \ref{barcon} are satisfied. Relation (c.) is satisfied as a result of the construction in Lemma \ref{psclensoperadlemma} which guarantees that shrinking an edge length to zero corresponds to rewinding the psc-isotopy which pushed out a bulb of head radius $1$ back to the lens from which it grew. This is precisely the composition we require. It should be obvious that nothing in this construction interferes combinatorially with the tree $T$ and so relation (b.) is easily satisfied.  Finally, relation (a.) is satisfied as a result of the weight function $\omega$ on the edge lengths of $T$. 
\end{proof}

We are now in a position to define the action of $W\D_n$ on $\Riem^{+}_{{\rm{D_{+}}}(1)}(S^{n})$. Recall that an element $T\in W\D_{k}$ is a psc-metric on $S^{n}$ with $k$ bulb-heads of radius $1$ ordered $1, \cdots, k$. We now define an action $\theta$ as follows:
\begin{equation}\label{WDaction}
\begin{split}
\theta_{\mathrm{psc}}:\D(k)\times \Riem^{+}_{{\rm{D_{+}}}(1)}(S^{n})^{k} &\longrightarrow \Riem^{+}_{{\rm{D_{+}}}(1)}(S^{n})\\
(T;(g_1, g_2, \cdots g_k))&\longmapsto J_{k0}(J_{(k-1)(0)}\cdots J_{20}( J_{10}(P(T), g_1), g_2)\cdots g_{k-1}), g_k),
\end{split}
\end{equation}
where $J_{ij}=J_{ij}^{\head(1,\frac{\pi}{2})}$, the map defined in \ref{bulbjoin}. Simply put, we cut off the round radius $1$ hemispheres from the $k$ bulb heads on $P(T)$ and on the north pole for each $g_i$, where $i\in\{1,\cdots, k\}$. We then glue in the obvious way according to label. To aid the reader, we depict an example in Fig. \ref{opaction} below.
\begin{figure}[!htbp]
\vspace{2.0cm}
\hspace{-5.0cm}
\begin{picture}(0,0)
\includegraphics{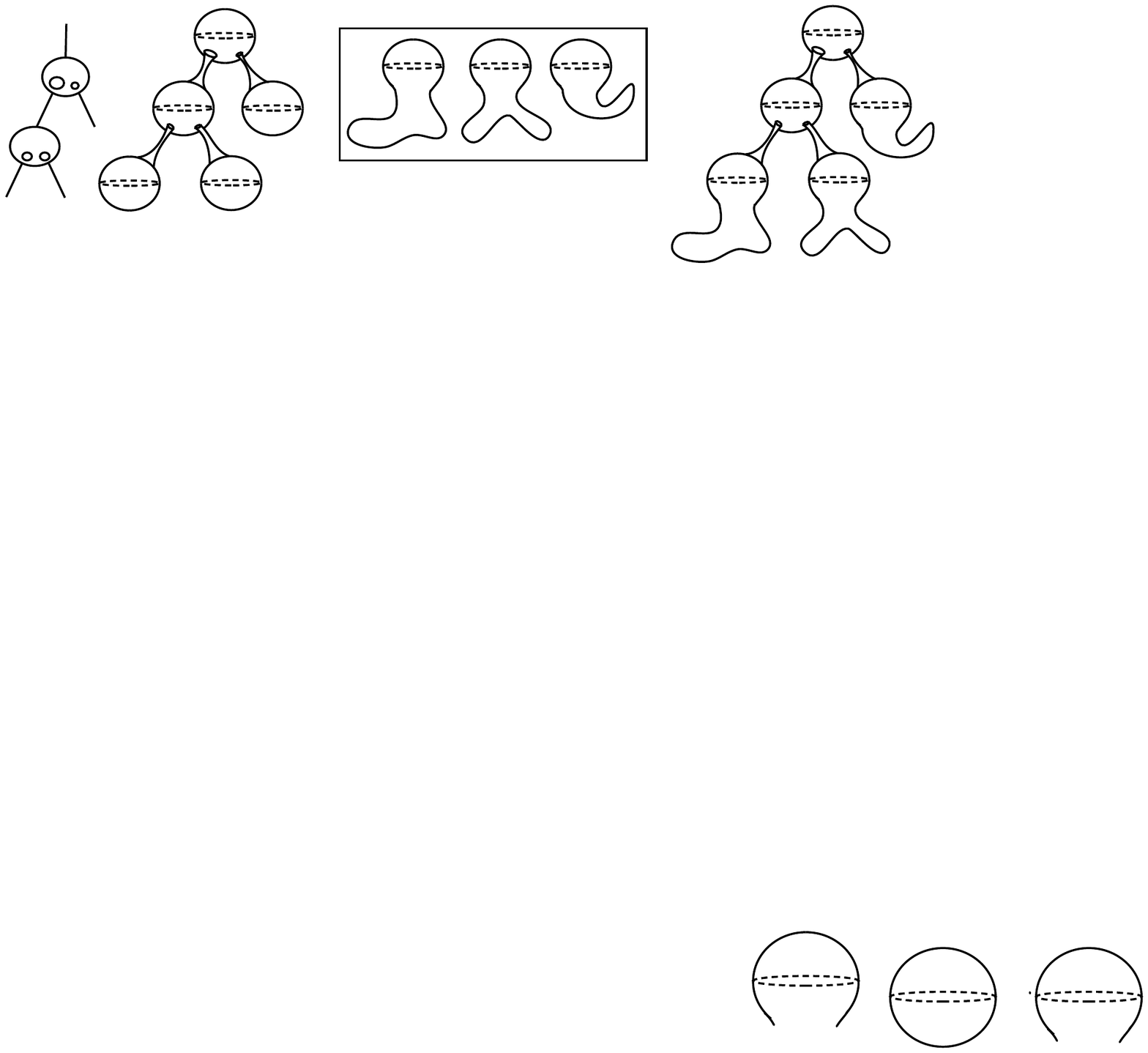}%
\end{picture}
\setlength{\unitlength}{3947sp}%
\begingroup\makeatletter\ifx\SetFigFont\undefined%
\gdef\SetFigFont#1#2#3#4#5{%
  \reset@font\fontsize{#1}{#2pt}%
  \fontfamily{#3}\fontseries{#4}\fontshape{#5}%
  \selectfont}%
\fi\endgroup%
\begin{picture}(5079,1559)(1902,-7227)
\put(2100,-5200){\makebox(0,0)[lb]{\smash{{\SetFigFont{10}{8}{\rmdefault}{\mddefault}{\updefault}{\color[rgb]{0,0,0}$T$}%
}}}}
\put(3000,-5200){\makebox(0,0)[lb]{\smash{{\SetFigFont{10}{8}{\rmdefault}{\mddefault}{\updefault}{\color[rgb]{0,0,0}$P(T)$}%
}}}}

\put(1800,-6700){\makebox(0,0)[lb]{\smash{{\SetFigFont{10}{8}{\rmdefault}{\mddefault}{\updefault}{\color[rgb]{0,0,0}$1$}%
}}}}
\put(2400,-6700){\makebox(0,0)[lb]{\smash{{\SetFigFont{10}{8}{\rmdefault}{\mddefault}{\updefault}{\color[rgb]{0,0,0}$2$}%
}}}}
\put(2630,-6100){\makebox(0,0)[lb]{\smash{{\SetFigFont{10}{8}{\rmdefault}{\mddefault}{\updefault}{\color[rgb]{0,0,0}$3$}%
}}}}
\put(4750,-5500){\makebox(0,0)[lb]{\smash{{\SetFigFont{10}{8}{\rmdefault}{\mddefault}{\updefault}{\color[rgb]{0,0,0}$g_1$}%
}}}}
\put(5530,-5600){\makebox(0,0)[lb]{\smash{{\SetFigFont{10}{8}{\rmdefault}{\mddefault}{\updefault}{\color[rgb]{0,0,0}$g_2$}%
}}}}
\put(6200,-5600){\makebox(0,0)[lb]{\smash{{\SetFigFont{10}{8}{\rmdefault}{\mddefault}{\updefault}{\color[rgb]{0,0,0}$g_3$}%
}}}}
\put(7200,-5000){\makebox(0,0)[lb]{\smash{{\SetFigFont{10}{8}{\rmdefault}{\mddefault}{\updefault}{\color[rgb]{0,0,0}$\theta_{\mathrm{psc}}(T;g_1,g_2,g_3)$}%
}}}}

\end{picture}%
\caption{An example of the operad action when $k=3$}
\label{opaction}
\end{figure}
\noindent We now state a lemma concerning this action.
\begin{Lemma}\label{actionlemma}
When $n\geq 3$, the action $\theta$ defined in \ref{WDaction} gives $\Riem^{+}_{{\rm{D_{+}}}(1)}(S^{n})$ the structure of a $W\D_{n}$-space.
\end{Lemma}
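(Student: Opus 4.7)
The plan is to verify the two axioms of an operad action listed in Section \ref{operadaction}. Continuity of $\theta_{\mathrm{psc}}$ is automatic, since it is defined as a composition of the continuous map $P$ from the preceding lemma with the continuous joining maps $J_{ij}^{\head(1,\pi/2)}$ of \ref{bulbjoin}.

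First I would handle the symmetric group equivariance. For $T \in W\D_n(k)$ and $\sigma \in \Sigma_k$, the element $T \cdot \sigma$ differs from $T$ only in the labelling of its $k$ inputs. Tracing through the construction of $P$, and in particular the labelling convention for little disks adopted in Lemma \ref{psclensoperadlemma}, the metric $P(T \cdot \sigma)$ is identical as a Riemannian metric to $P(T)$, except that the bulb-head previously labelled $\sigma^{-1}(i)$ is now labelled $i$. Since $\theta_{\mathrm{psc}}$ attaches $g_i$ to the bulb-head labelled $i$, the identity $\theta_{\mathrm{psc}}(T\cdot\sigma;(g_1,\ldots,g_k))=\theta_{\mathrm{psc}}(T;(g_{\sigma^{-1}(1)},\ldots,g_{\sigma^{-1}(k)}))$ is then immediate from the definition.

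The substantive part of the proof is the commutative diagram for operad composition. For $c \in W\D_n(k)$, $d_i \in W\D_n(j_i)$ and psc-metrics $z_{i,l} \in \Riem^{+}_{D_{+}(1)}(S^n)$, I must show
\begin{equation*}
\theta_{\mathrm{psc}}\bigl(\gamma(c;d_1,\ldots,d_k);(z_{1,1},\ldots,z_{k,j_k})\bigr)=\theta_{\mathrm{psc}}\bigl(c;\theta_{\mathrm{psc}}(d_1;z_{1,*}),\ldots,\theta_{\mathrm{psc}}(d_k;z_{k,*})\bigr).
\end{equation*}
The right-hand side is constructed by first assembling each metric $\theta_{\mathrm{psc}}(d_i; z_{i,*})$, which is $P(d_i)$ with the $z_{i,l}$ glued into its bulb-heads, and then gluing the results into the bulb-heads of $P(c)$. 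For the left-hand side, the metric $P(\gamma(c;d_1,\ldots,d_k))$ is built inductively from $P(c)$: each of its $k$ bulb-heads is by construction a canonical round hemisphere of radius $1$, and Lemma \ref{psclensoperadlemma} is invoked on each such hemisphere to push out $P(d_i)$. Conditions (2)--(5) of that lemma guarantee that this pushout is supported strictly inside the bulb-head and does not alter the metric elsewhere, so the resulting metric agrees pointwise with the metric obtained by first producing $P(d_i)$ on a standalone hemisphere and then attaching it via $J_{i0}^{\head(1,\pi/2)}$. Once the $z_{i,l}$ are attached to the bulb-heads of the $P(d_i)$ component (either before or after this insertion), the two constructions yield the same Riemannian metric.

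The main obstacle is to reconcile the new internal edges of length $1$ created by grafting with the weighting function $\omega$ from \ref{omega}: such edges typically lie on ``special-case'' paths, and one must check that $\omega$ is stable under grafting. Concretely, the identity $\omega(s_{i+1})=s_{i+1}+\omega(s_i)-s_{i+1}\omega(s_i)$ mirrors the operation $t_1*t_2=t_1+t_2-t_1t_2$ appearing in relation (a.) of the bar construction, so applying $\omega$ along a grafted path produces the same assignment as repeatedly merging identity vertices via relation (a.). The recursive formula defining $\omega$ was designed precisely so that this holds at each stage, and, combined with the invariance properties of the bulb construction noted above, it yields the desired compatibility. With equivariance and the commutative diagram both verified, $\theta_{\mathrm{psc}}$ endows $\Riem^{+}_{D_{+}(1)}(S^n)$ with the structure of a $W\D_n$-space.
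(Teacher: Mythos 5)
Your proposal follows the paper's approach: verify the two operad-action axioms, with equivariance handled combinatorially and the composition axiom reduced to checking that $P$ is compatible with tree grafting, which in turn reduces to the behaviour of the weighting function $\omega$ on the new internal edges of length $1$. You have identified the correct obstacle, but your justification of the $\omega$ step is looser than the paper's: rather than appealing to an analogy with relation (a.) and to $\omega$ having been "designed to work," the paper isolates the precise mechanism, namely that the grafted edge always has length $1$ and that $\omega(1) = 1 + \omega(s) - 1\cdot\omega(s) = 1$ for any preceding value $\omega(s)$, so length-$1$ edges act as "resets" in the recursion and insulate the $\omega$-values of the upper subtree from those below the graft point. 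Making that computation explicit would tighten the proof; as written, the claim that applying $\omega$ along a grafted path "produces the same assignment as repeatedly merging identity vertices" is not quite the right reason (grafting does not create identity vertices), even though the underlying $*$-algebra is indeed shared between the two phenomena.
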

\begin{proof}
We need to verify that the map $\theta_{\mathrm{psc}}$ satsifies conditions (1.) and (2.) in the definition of an operad action at the beginning of section \ref{operadaction}. Showing that condition (1.) is satisfied is an easy combinatorial exercise. The second condition, which concerns composition, is a little more subtle. The main pitfall is as follows. Suppose we compose trees $T_1$ and $T_2$ to obtain $T_3$. We need to be sure that metric $P(T_3)$ is precisely the metric obtained by the corresponding attachment of the metrics $P(T_1)$ and $P(T_2)$. In the case when all edges are of length $1$, this is obvious by the construction. However, in the case of more general trees we have to consider the effects of the weighting function $\omega$ on the lengths of edges. Recall that for certain edges, ones which are part of a special case described above, we have to ensure that the function $\omega$ respects tree composition. The saving grace here is the way in which we compose trees. Recall from section \ref{optree} that this composition involves the identification of the outgoing external edge of one tree with an incoming external edge of another. The new edge length is always $1$. Now suppose that the edge length directly above this newly formed edge has edge length $t$ and that the edge below has length $s$. We therefore have a sequence of $3$ edges with lengths, listed in order from bottom to top: $s,1,t$. When we apply $\omega$ we obtain the following new values:
\begin{equation*}
\begin{split}
t&\longmapsto \omega(t)\\
1&\longmapsto \omega(1)=1+\omega(s)-1.\omega(s)=1\\
s&\longmapsto \omega(s)
\end{split}
\end{equation*}
Thus, the edges of length $1$ act as ``resets" in our recursive formula for $\omega$. In particular, this means that the weight information above the point of composition is unaffected by the newly added subtree.
\end{proof}
\begin{Corollary}\label{actioncor}
When $n\geq3$, the space $\Riem^{+}(S^{n})$ has the homotopy type of a $\D_{n}$-space
\end{Corollary}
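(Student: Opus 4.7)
The plan is to chain together the homotopy equivalences established earlier in the paper with the operad action constructed in Lemma \ref{actionlemma}, and then invoke the Boardman--Vogt theorem to convert the $W\D_n$-action into a $\D_n$-action. First I would record the homotopy equivalence
\begin{equation*}
\Riem^{+}(S^{n}) \simeq \Riem^{+}_{{\rm{head}}(p_0)}(S^{n}) \simeq \Riem^{+}_{{\rm{D_{+}}}(1)}(S^{n}),
\end{equation*}
where the first equivalence is the corollary immediately following Lemma \ref{homequivspaces} applied to the single-point collection ${\bf p}=\{p_0\}$, and the second equivalence is the deformation retract of Lemma \ref{bulbdefret} (with $\lambda_0=1$ and the evident variant allowing $r_0$ to vary over $(0,\frac{\pi}{2}]$, i.e.\ the intermediate space $\Riem^{+}_{\rm head(p_0,1)}(S^{n})=\Riem^{+}_{D_{+}(1)}(S^{n})$).

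Next I would quote Lemma \ref{actionlemma}, which exhibits the action $\theta_{\mathrm{psc}}$ from \ref{WDaction} giving $\Riem^{+}_{{\rm{D_{+}}}(1)}(S^{n})$ the structure of a $W\D_n$-space. Applying Theorem \ref{BVthm} of Boardman and Vogt to $\P=\D_n$, we conclude that $\Riem^{+}_{{\rm{D_{+}}}(1)}(S^{n})$ is also a $\D_n$-space. Combining this with the homotopy equivalence above gives that $\Riem^{+}(S^{n})$ has the homotopy type of a $\D_n$-space, as desired.

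There is essentially no obstacle here beyond bookkeeping, since all of the substantive work has already been done: Lemma \ref{actionlemma} contained the geometric content (the compatibility of the bar-construction relations with the psc-isotopies produced by Lemma \ref{psclensoperadlemma}), the homotopy equivalences to $\Riem^{+}(S^{n})$ were settled in Lemmas \ref{homequivspaces} and \ref{bulbdefret}, and Theorem \ref{BVthm} is a standard result. The only mild subtlety worth flagging is that ``having the homotopy type of a $\D_n$-space'' is preserved under homotopy equivalence, which is immediate from the definition of a $\P$-space in Section \ref{operadaction} (one transports the action $\theta$ along a homotopy equivalence, yielding a map $\D_n(k)\times Z^{k}\to Z$ satisfying the required commutative diagrams up to homotopy).
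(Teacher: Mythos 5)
Your proposal follows the paper's proof exactly: Lemma \ref{actionlemma} supplies the $W\D_n$-action on $\Riem^{+}_{D_{+}(1)}(S^{n})$, Theorem \ref{BVthm} upgrades this to a $\D_n$-space structure, and Lemmas \ref{homequivspaces} and \ref{bulbdefret} give the homotopy equivalence with $\Riem^{+}(S^{n})$. One small caution on your final parenthetical: transporting the action $\theta$ along a homotopy equivalence only yields the coherence diagrams \emph{up to homotopy}, which is not the same as an actual $\D_n$-space structure; but this transport is unneeded, since ``has the homotopy type of a $\D_n$-space'' is preserved under homotopy equivalence simply by transitivity of homotopy equivalence, and $\Riem^{+}_{D_{+}(1)}(S^{n})$ itself is already the $\D_n$-space.
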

\begin{proof}
This is an immediate consequence of Lemma \ref{actionlemma} above, Theorem \ref{BVthm} and Lemma \ref{bulbdefret}.
\end{proof}
\noindent We have overcome one significant obstacle to proving our main result. In the next section, we will deal with the other.

\section{Showing that $\pi_0$ is a group}\label{grouplike}
In this section we wish to show that $\pi_{0}(\Riem^{+}(S^{n}))$ is a group under the operation induced by the Gromov-Lawson connected sum construction. The set $\pi_0(\Riem^{+}(S^{n}))$ is of course the set of path components of $\Riem^{+}(S^{n})$.  Earlier in the paper we noted that two metrics in $\Riem^{+}(S^{n})$ are said to be {\em psc-isotopic} if they lie in the same path component. The notion of {\em psc-isotopy} is therefore an equivalence relation on the space $\Riem^{+}(S^{n})$. A related notion, which we will make use of shortly is the notion of {\em psc-concordance}. In the case of metrics $g_0, g_1\in \Riem^{+}(S^{n})$, we say that $g_0$ and $g_1$ are {\em psc-concordant} if there is a psc-metric $\bar{g}$ on $S^{n}\times I$ which near $S^{n}\times \{0\}$ takes the form of a product $g_0+dt^{2}$ and near $S^{n}\times \{1\}$ takes the form $g_1+dt^{2}$. Again, psc-concordance is an equivalence relation on the set of psc-metrics $\Riem^{+}(S^{n})$.
It is a well known fact that metrics which are psc-isotopic are psc-concordant; see Lemma 1.3 from \cite{Walsh1} for example. Recent work by Botvinnik in \cite{Botvinnik} has shown that, under reasonable hypotheses, the converse is true. In particular, the following theorem is a case of Theorem B. from \cite{Botvinnik}.

\vspace{0.2cm}
\noindent {\bf Theorem.} (Botvinnik) \cite{Botvinnik}
{\em Let $g_0, g_1\in \Riem^{+}(S^{n})$. When $n\geq 5$, $g_0$ is psc-isotopic to $g_1$ if and only if $g_1$ is psc-concordant to $g_1$.}
\vspace{0.2cm} 

The above theorem will play an important role in the proof of our main result. It is worth noting that the hypothesis that $n$ be at least five cannot be removed as the above result fails to be true when $n=4$; see \cite{Ru}. In the case when $n=3$, it is demonstrated by Marques in \cite{Marques} that the space $\Riem^{+}(S^{n})$ is path connected, and so Botvinnik's theorem holds here for trivial reasons.
We now return to the problem of equipping $\pi_0(\Riem^{+}(S^{n}))$ with a group structure. 
\begin{Lemma}\label{pi0group}
For $n=3$ or $n\geq 5$, the set $\pi_0(\Riem^{+}(S^{n}))$ is a group under the operation induced by connected sum of metrics.
\end{Lemma}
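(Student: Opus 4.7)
The plan is to first observe that, by Theorem \ref{HTheorem}, the multiplication $\mu^{\torp}$ descends to a commutative, associative binary operation on $\pi_{0}(\Riem^{+}(S^{n}))$, with identity the class $[\bar{g}_{1}] = [ds_{n}^{2}]$ (the two representatives are psc-isotopic by an obvious warping). So $\pi_{0}$ already has the structure of a commutative monoid, and the entire content of the lemma is the existence of inverses. In the case $n = 3$, Marques's theorem asserts that $\Riem^{+}(S^{3})$ is path-connected, so $\pi_{0}$ is trivially a group and there is nothing further to prove.

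For $n \geq 5$, I would invoke Botvinnik's Theorem B to identify psc-isotopy with psc-concordance on $S^{n}$. Under this identification, the operation on $\pi_{0}$ induced by $\mu^{\torp}$ coincides with the classical connected sum operation on psc-concordance classes: both are realized by attaching copies of $(S^{n}, \cdot)$ along a small neck via a Gromov-Lawson construction, and Lemma \ref{homotprod} combined with Lemma \ref{GLfamily} shows that the result is independent up to concordance of all the auxiliary choices (rescaling function, intermediary metric $g_{3}$, and attaching points). It therefore suffices to prove that the monoid of psc-concordance classes on $S^{n}$ is a group.

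To construct concordance inverses, I would show that $[g]^{2} = [ds_{n}^{2}]$ for every $g \in \Riem^{+}(S^{n})$, so that each class is its own inverse. The construction begins with the trivial product psc-metric $g + dt^{2}$ on $S^{n} \times I$, viewed as a psc-cobordism whose boundary is two copies of $(S^{n}, g)$. Using the parametric Gromov-Lawson surgery of Lemma \ref{GLfamily}, attach a $1$-handle in the interior joining these two boundary components; this yields a psc-metric on a cobordism $W$ whose boundary is a single copy of $S^{n}$ carrying $g \# g$. A standard argument then converts the resulting null-cobordism of $g \# g$ into a psc-concordance from $g \# g$ to $ds_{n}^{2}$: puncture $W$ at an interior point and use further parametric Gromov-Lawson surgery to smooth the resulting small inner sphere to a genuine round cap, producing a cobordism diffeomorphic to $S^{n} \times I$ with the required product structures near both boundaries. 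This establishes $[g]^{2} = [ds_{n}^{2}]$ and gives $\pi_{0}(\Riem^{+}(S^{n}))$ the structure of an abelian group of exponent $2$.

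The main obstacle is the careful execution of the two surgery steps --- the interior $1$-handle attachment and the subsequent puncture-and-smoothing --- so that the final cobordism is topologically a cylinder $S^{n} \times I$ and the boundary metrics are exactly $g \# g$ and $ds_{n}^{2}$ (rather than merely some psc-metric obtained by further Gromov-Lawson modification). Both steps fall within the scope of the parametric Gromov-Lawson machinery already developed in Lemma \ref{GLfamily}, so the proof reduces to disciplined bookkeeping on handle decompositions and boundary profiles. A smaller but essential point, which follows directly from Lemma \ref{homotprod}, is the verification that the $\mu^{\torp}$-product on $\pi_{0}$ does agree with the standard connected sum operation on concordance classes, which is what allows the group structure on the concordance monoid to transfer back to $\pi_{0}$.
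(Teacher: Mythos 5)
Your overall strategy is sound and matches the paper in its broad strokes: handle $n=3$ by Marques's path-connectedness result, and for $n\geq 5$ reduce to the psc-concordance group via Botvinnik's Theorem B, constructing explicit inverse concordances from the product cylinder $g+dt^{2}$ on $S^{n}\times I$ by a tunnel-plus-puncture modification. However, there is a genuine error in the identification of the resulting boundary metric, and it propagates to a false conclusion about the group structure.

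The error is in the claim that the modified cylinder is a concordance from $ds_{n}^{2}$ to $g\# g$, and hence that every class squares to the identity. The two ends $S^{n}\times\{0\}$ and $S^{n}\times\{1\}$ carry \emph{opposite} boundary orientations as boundary components of $S^{n}\times I$. When you join them through the interior (whether by attaching a $1$-handle, or, as the paper does, by digging out a neighborhood of the arc $\{p\}\times I$ after Gromov--Lawson standardizing the metric there), the resulting single boundary sphere carries $g\# g^{-1}$, where $g^{-1}=\phi^{*}g$ for an orientation-reversing diffeomorphism $\phi$ of $S^{n}$ --- not $g\# g$. Consequently the inverse of $[g]$ is $[g^{-1}]$, not $[g]$, and the conclusion that $\pi_{0}(\Riem^{+}(S^{n}))$ is an abelian group of exponent $2$ is false. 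Indeed, for $n\equiv 3\pmod 4$ with $n\geq 7$ the relative index in $KO_{n+1}\cong\mathbb{Z}$ gives a homomorphism $\pi_{0}(\Riem^{+}(S^{n}))\to\mathbb{Z}$ which is additive under connected sum and changes sign under orientation reversal; any class mapping to a nonzero integer therefore has infinite order, and $[g\# g]$ maps to twice that integer, so $g\# g$ is not concordant to $ds_{n}^{2}$. The paper's proof (following Gajer) takes care to write the result of the tunnel construction as $g\# g^{-1}$ and identifies $[g^{-1}]$ as the inverse; you should do the same.
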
  
\begin{proof}
Corollary 1.1 of \cite{Marques} states that the space $\Riem^{+}(S^{n})$ is path-connected when $n=3$. We therefore concentrate on the case when $n\geq 5$. The group operation is of course induced by the Gromov-Lawson connected sum construction on psc-metrics. In the case of two psc-merics $g_0$ and $g_1$ on $S^{n}$, we may form a new psc-metric  $g_0\# g_1$ by removing disks from $(S^{n}, g_0)$ and $(S^{n}, g_1)$ and connecting the resulting disks with a cylinder $S^{n-1}\times I$ equipped with an appropriate connecting psc-metric, a la Gromov and Lawson in \cite{GL}. It is an easy corollary of Lemma \ref{bulblemma} that this induces a well defined binary operation on $\pi_0(\Riem^{+}(S^{n}))$.
 
Verifying that the various group axioms hold is mostly straightforward. In particular, it is clear that the class containing the standard round metric, $[ds_{n}^{2}]$, is the identity. The only difficult lies in verifying that each element has an inverse. To see this we briefly return to the notion of psc-concordance. Gajer in \cite{Gajer} shows that the set of concordance classes of $\Riem^{+}(S^{n})$, which we denote $\pi_0^{c}(\Riem^{+}(S^{n}))$, forms a group also under the operation induced by connected sum. We won't reprove it here but it is worth briefly recounting Gajer's method for showing that each concordance class has an inverse, as we will make good use of it. Given a psc-metric $g$ on $S^{n}$ which represents a particular concordance class, equip $S^{n}\times I$ with the standard product $g+dt^{2}$. Let $p\in S^{n}$ be any point. Consider the arc $\{p\}\times I$ in $S^{n}\times I$. Using Lemma \ref{bulblemma} in a slicewise fashion, one can easily adjust the metric $g+dt^{2}$ in a neighbourhood of this arc to obtain a psc-metric $g'+dt^{2}$ so that near $\{p\}\times I$, $g'+dt^{2}=g_{tor}^{n}(\delta)+dt^{2}$ for some $\delta>0$. Recall that $g_{tor}^{n}(\delta)$ is the standard torpedo metric of radius $\delta$ on the disk. Next, we use the Gromov-Lawson method to push out a torpedo cap away from this neighbourhood and preserve positive scalar curvature. By first removing the cap part, then removing the previously constructed ``cylinder of caps" and finally smoothing out the inevitable corners, we are left with a manifold which is topologically a cylinder $S^{n}\times I$ but with a very different metric; see Fig. \ref{concinv}. At one end we have a standard round metric of radius $\delta$. At the other end we have the psc-metric $g\# g^{-1}$ obtained by taking a connected sum of $g$ and $g^{-1}$. Here $g^{-1}$ is isometric to $g$ but via an orientation reversing isometry. In Theorem 2.2 of \cite{Walsh1}, we show in great detail how to adjust a psc-metric on a manifold with boundary in precisely this situation in order to obtain a psc-metric which has a product structure near the boundary. On performing such an adjustment we obtain a psc-concordance between $\delta^{2}ds_{n}^{2}$ and $g\# g^{-1}$ and thus between $ds_{n}^{2}$ and $g\# g^{-1}$. Thus the classes containing $g$ and $g^{-1}$ are inverses in the group $\pi_0^{c}(\Riem^{+}(S^{n}))$. 
\begin{figure}[!htbp]
\vspace{1.0cm}
\hspace{2.5cm}
\begin{picture}(0,0)
\includegraphics{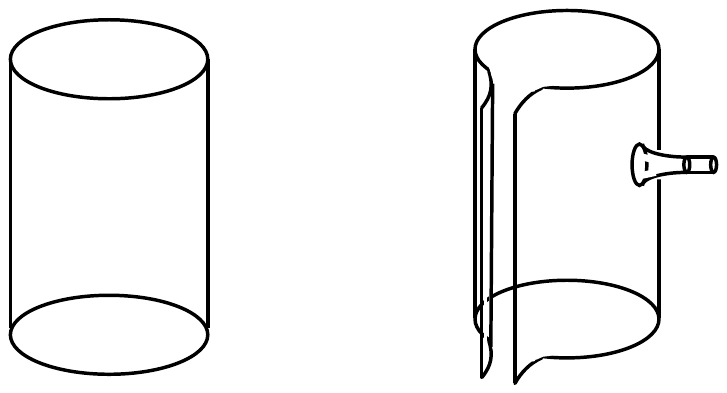}%
\end{picture}
\setlength{\unitlength}{3947sp}%
\begingroup\makeatletter\ifx\SetFigFont\undefined%
\gdef\SetFigFont#1#2#3#4#5{%
  \reset@font\fontsize{#1}{#2pt}%
  \fontfamily{#3}\fontseries{#4}\fontshape{#5}%
  \selectfont}%
\fi\endgroup%
\begin{picture}(5079,1559)(1902,-7227)
\put(2400,-6000){\makebox(0,0)[lb]{\smash{{\SetFigFont{10}{8}{\rmdefault}{\mddefault}{\updefault}{\color[rgb]{0,0,0}$g+dt^{2}$}%
}}}}
\put(3850,-6000){\makebox(0,0)[lb]{\smash{{\SetFigFont{10}{8}{\rmdefault}{\mddefault}{\updefault}{\color[rgb]{0,0,0}$g\# g^{-1}$}%
}}}}
\put(5750,-6006){\makebox(0,0)[lb]{\smash{{\SetFigFont{10}{8}{\rmdefault}{\mddefault}{\updefault}{\color[rgb]{0,0,0}$\delta^{2}ds_{n}^{2}$}%
}}}}
\end{picture}%
\caption{The cylinder $g+dt^{2}$ (left) and the metric which gives rise, after adjustment, to the concordance between $g\# g^{-1}$ and $\delta^{2}ds_{n}^2$ (right)}
\label{concinv}
\end{figure}

To show that $g$ and $g^{-1}$ represent inverse elements in $\pi_{0}(\Riem^{+}(S^{n}))$, we need only show that as well as being psc-concordant, the round metric and the connected sum of $g$ and $g^{-1}$ are also psc-isotopic. That these metrics are indeed psc-isotopic when $n\geq 5$ follows of course from the aforementioned theorem of Botvinnik, Theorem B. of \cite{Botvinnik}.
\end{proof}

\begin{Corollary}\label{pi0}
 For $n=3$ or $n\geq 5$, the set $\pi_0(\Riem_{D_{+}(1)}^{+}(S^{n}))$ is a group under the operation induced by the homotopy product $\mu^{\head}$ defined in \ref{muprod2}.
\end{Corollary}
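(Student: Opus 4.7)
The plan is to transfer the group structure on $\pi_{0}(\Riem^{+}(S^{n}))$ from Lemma \ref{pi0group} across the chain of homotopy equivalences
\[
\Riem_{D_{+}(1)}^{+}(S^{n}) \simeq \Riem_{\head(p_0)}^{+}(S^{n}) \simeq \Riem^{+}(S^{n}),
\]
supplied by Lemma \ref{bulbdefret} composed with Lemma \ref{homequivspaces}. These induce a bijection $\Phi:\pi_{0}(\Riem_{D_{+}(1)}^{+}(S^{n})) \rightarrow \pi_{0}(\Riem^{+}(S^{n}))$. Since Theorem \ref{HTheorem2} already endows $\pi_{0}(\Riem_{\head(p_0)}^{+}(S^{n})) \cong \pi_{0}(\Riem_{D_{+}(1)}^{+}(S^{n}))$ with a monoid structure under $\mu^{\head}$, the heart of the matter is to check that $\Phi$ carries this operation to the connected-sum operation on the right.

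The key verification unwinds the definition of $\mu^{\head}$ in \ref{muprod2} and the joining map $J_{0j}^{\head}$ in \ref{bulbjoin}. For $g, h \in \Riem_{D_{+}(1)}^{+}(S^{n})$, the metric $\mu^{\head}(g,h) = J_{02}^{g_3}(h, J_{01}^{g_3}(g, g_3))$ is built by cutting off the radius-$1$ hemispherical heads at $p_0$ from $g$ and $h$ and gluing the resulting boundary lenses into the heads removed from $g_3$ at $p_1$ and $p_2$. Because Theorem \ref{HTheorem2} requires $g_3$ to be psc-isotopic to the round metric $ds_n^2$, Lemma \ref{GLfamily} lets us run this isotopy while keeping all three caps at $p_0, p_1, p_2$ intact, and hence lifts it to a psc-isotopy from $\mu^{\head}(g,h)$ to the metric produced by the same recipe with a standard three-capped round sphere in place of $g_3$. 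That latter metric is visibly a Gromov-Lawson connected sum $g \# h$ with a standard connecting tube, so $\Phi([\mu^{\head}(g,h)]) = [g] \# [h]$ in $\pi_{0}(\Riem^{+}(S^{n}))$.

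Lemma \ref{pi0group} then provides inverses and associativity for the connected-sum operation on $\pi_{0}(\Riem^{+}(S^{n}))$ in the cases $n=3$ and $n \geq 5$, and these transport back along $\Phi$ to endow $\pi_{0}(\Riem_{D_{+}(1)}^{+}(S^{n}))$ with the structure of a group under the operation induced by $\mu^{\head}$. The principal obstacle is the middle step: carefully deploying Lemma \ref{GLfamily} so that a psc-isotopy $g_3 \sim ds_n^2$ lifts to a psc-isotopy between $\mu^{\head}(g,h)$ and a canonical connected-sum representative of $[g] \# [h]$, without disturbing the input metrics.
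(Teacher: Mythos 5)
Your proposal is correct and follows essentially the same argument as the paper: you use the bijection on $\pi_0$ induced by the inclusion $\Riem_{D_{+}(1)}^{+}(S^{n})\subset\Riem^{+}(S^{n})$ (or equivalently the chain of homotopy equivalences), and then appeal to the psc-isotopy between $g_3$ and the round metric to identify the $\mu^{\head}$-product with the Gromov--Lawson connected sum at the level of path components, after which Lemma \ref{pi0group} gives the group structure. The paper's own proof is terser but makes the same two moves.
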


\begin{proof} The inclusion $\Riem_{D_{+}(1)}^{+}(S^{n})\subset \Riem^{+}(S^{n})$ gives a bijection between $\pi_0(\Riem_{D_{+}(1)}^{+}(S^{n}))$ and  $\pi_0(\Riem^{+}(S^{n}))$. The corollary then follows from the fact that the intermediary metric $g_3$, used in determining the product $\mu^{\head}$, is isotopic to the standard round metric. This means metrics resulting from this product are easily deformed by psc-isotopy to a regular Gromov-Lawson style connected sum, and so the two operations behave in the same way with regard to path components. 
\end{proof}

\subsection{The Loop Space Theorem}\label{Loop4}
By combining the results of the previous sections we can now prove the following theorem, our second main result.

\begin{Theorem}\label{LoopThm} 
For $n=3$ or $n\geq 5$, the space of positive scalar curvature metrics on the $n$-dimensional sphere, $\Riem^{+}(S^{n})$, is weakly homotopy equivalent to an $n$-fold loop space.
\end{Theorem}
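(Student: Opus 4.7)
The plan is to assemble this theorem from the components already developed in the paper. By Lemma \ref{actionlemma}, for $n\geq 3$ the space $\Riem^{+}_{D_{+}(1)}(S^{n})$ carries the structure of a $W\D_{n}$-space via the action $\theta_{\mathrm{psc}}$ defined in \ref{WDaction}. Applying the Boardman--Vogt theorem, Theorem \ref{BVthm}, this structure transfers to a $\D_{n}$-space structure on $\Riem^{+}_{D_{+}(1)}(S^{n})$, since the two operads act on the same spaces up to the equivalence provided by the bar construction.

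Next, I would invoke Corollary \ref{pi0}, which states that $\pi_{0}(\Riem^{+}_{D_{+}(1)}(S^{n}))$ is a group under the operation induced by the $H$-space multiplication $\mu^{\head}$. Since this multiplication is obtained by specialising the operad action $\theta_{\mathrm{psc}}$ to the binary level (it corresponds to a tree with one vertex of arity two), it induces the same map on $\pi_0$ as the $\D_{n}$-space action. Hence $\Riem^{+}_{D_{+}(1)}(S^{n})$ is group-like as a $\D_{n}$-space in the sense of Theorem \ref{BVM}.

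With group-likeness in hand, I would apply Theorem \ref{BVM} (Boardman--Vogt--May) directly to conclude that $\Riem^{+}_{D_{+}(1)}(S^{n})$ is weakly homotopy equivalent to an $n$-fold loop space. To transfer this conclusion to $\Riem^{+}(S^{n})$ itself, I would invoke Lemma \ref{bulbdefret} together with Lemma \ref{homequivspaces}, which together give a homotopy equivalence between $\Riem^{+}_{D_{+}(1)}(S^{n})$ and $\Riem^{+}(S^{n})$ when $n\geq 3$. A homotopy equivalence is in particular a weak homotopy equivalence, so composing yields the desired conclusion.

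The main obstacle has already been dispatched in the preceding sections: constructing the $W\D_n$-action compatibly with the operadic relations (Lemma \ref{actionlemma}, which required the careful weighting function $\omega$), and verifying group-likeness (Lemma \ref{pi0group}, which needed Botvinnik's concordance-implies-isotopy theorem and hence the restriction to $n=3$ or $n\geq 5$). So the proof itself is essentially a matter of citing the chain of results: Lemma \ref{actionlemma} $\Rightarrow$ Theorem \ref{BVthm} $\Rightarrow$ $\D_n$-space; Corollary \ref{pi0} $\Rightarrow$ group-like; Theorem \ref{BVM} $\Rightarrow$ weak equivalence to an $n$-fold loop space; Lemma \ref{bulbdefret} $\Rightarrow$ transfer to $\Riem^{+}(S^{n})$. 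The restriction on $n$ enters solely through the group-likeness step, exactly as flagged in the introduction.
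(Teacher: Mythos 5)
Your proposal is correct and takes essentially the same route as the paper: establish the $W\D_n$-action via Lemma \ref{actionlemma}, pass to a $\D_n$-space via Theorem \ref{BVthm}, verify group-likeness via Corollary \ref{pi0}, apply Theorem \ref{BVM}, and transfer along the homotopy equivalence from Lemmas \ref{homequivspaces} and \ref{bulbdefret}. You have actually spelled out the intermediate passage $W\D_n$-space $\Rightarrow$ $\D_n$-space a bit more explicitly than the paper's terse proof does, but the logical chain is identical.
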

\begin{proof} 
We know from Lemma \ref{homequivspaces} and Lemma \ref{bulbdefret} that when $n\geq3$, the spaces $\Riem^{+}(S^{n})$ and $\Riem_{D_{+}(1)}^{+}(S^{n})$ are homotopy equivalent. It is therefore enough to prove the theorem for $\Riem_{D_{+}(1)}^{+}(S^{n})$. Following Theorem \ref{BVM}, we need only show that $\Riem_{D_{+}(1)}^{+}(S^{n})$ is a $\mathbb{D}^{n}$-space and that $\pi_0(\Riem_{D_{+}}^{+}(S^{n}))$ is a group under the operation induced by the $H$-space product $\mu^{\head}$.  The first of these is done when $n\geq 3$ in Lemma \ref{actionlemma}, while the second is done when $n=3$ or when $n\geq 5$ in Corollary \ref{pi0}.
\end{proof}
\begin{Corollary}\label{pathcompcor} 
For $n=3$ or $n\geq 5$, all path components of the space $\Riem^{+}(S^{n})$ are weakly homotopy equivalent.
\end{Corollary}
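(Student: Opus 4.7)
The plan is to invoke Theorem \ref{LoopThm} and then apply the classical fact that every $n$-fold loop space has weakly homotopy equivalent path components. First I would use the weak homotopy equivalence $\Riem^{+}(S^{n}) \simeq \Omega^{n} Y$ supplied by Theorem \ref{LoopThm}. Any weak equivalence $A \to B$ induces a bijection $\pi_{0}(A) \cong \pi_{0}(B)$ and restricts to weak equivalences between corresponding path components, so it is enough to prove that all path components of $\Omega^{n} Y$ are weakly homotopy equivalent.

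Next I would observe that $\Omega^{n} Y$ is a group-like $H$-space under loop concatenation: it is homotopy associative with homotopy identity the constant loop, and every loop $\gamma$ has a homotopy inverse $\gamma^{-1}$ obtained by traversing $\gamma$ in reverse. For any $\gamma \in \Omega^{n} Y$, the left translation map $L_{\gamma}(\omega) = \gamma \cdot \omega$ then admits a two-sided homotopy inverse $L_{\gamma^{-1}}$: by homotopy associativity, $L_{\gamma} \circ L_{\gamma^{-1}}$ is homotopic to $L_{\gamma \cdot \gamma^{-1}}$, which is in turn homotopic to the identity since $\gamma \cdot \gamma^{-1}$ is null-homotopic. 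As $L_{\gamma}$ carries the component of the constant loop onto the component of $\gamma$, every path component of $\Omega^{n} Y$ is homotopy equivalent to the identity component.

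Transporting this conclusion back along the weak equivalence $\Riem^{+}(S^{n}) \simeq \Omega^{n} Y$ gives the weak equivalence of all path components of $\Riem^{+}(S^{n})$. The main obstacle is essentially no obstacle at all: the deep content lives in Theorem \ref{LoopThm}, and the remaining step is the standard (purely formal) argument that left translation in a group-like $H$-space is a homotopy equivalence. As a sanity check, one could also bypass the loop space detour entirely and run the same translation argument directly on $\Riem_{D_{+}(1)}^{+}(S^{n})$, using the homotopy associative product $\mu^{\head}$ from Theorem \ref{HTheorem2} together with the group structure on $\pi_{0}$ from Corollary \ref{pi0} to produce a representative of $[g]^{-1}$, and then transferring back to $\Riem^{+}(S^{n})$ via Lemma \ref{homequivspaces} and Lemma \ref{bulbdefret}.
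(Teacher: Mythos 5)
Your argument is correct and matches what the paper leaves implicit: the corollary is stated directly after Theorem \ref{LoopThm} with no proof, the intended deduction being exactly the standard fact that all path components of an $n$-fold loop space (or of any group-like, homotopy-associative $H$-space) are homotopy equivalent via left translation, transported back across the weak equivalence. Your suggested shortcut — running the translation argument directly on $\Riem_{D_{+}(1)}^{+}(S^{n})$ using $\mu^{\head}$ (Theorem \ref{HTheorem2}) and the group structure on $\pi_0$ (Corollary \ref{pi0}), then transferring via Lemmas \ref{homequivspaces} and \ref{bulbdefret} — is also valid and arguably cleaner, since it bypasses the loop-space machinery entirely while relying on the same key input (Botvinnik's theorem via Corollary \ref{pi0}).
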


\bibliographystyle{amsplain}

\end{document}